\documentclass[11pt]{amsart}

\usepackage[
    a4paper,
    bindingoffset=0.2in,
    left=0.8in,
    right=1in,
    top=1in,
    bottom=1in,
    footskip=.25in
]{geometry}

\usepackage{amsmath}
\usepackage{thmtools}
\usepackage{lmodern}
\usepackage{microtype}
\usepackage[dvipsnames,svgnames,x11names]{xcolor}
\usepackage{pgfplots}
\usepgfplotslibrary{groupplots}
\usetikzlibrary{arrows.meta,calc}
\pgfplotsset{compat=1.18}

\usepackage{mathtools,amssymb,bm}
\usepackage[mathscr]{euscript}

\usepackage{enumitem}
\usepackage{comment}

\usepackage[
    pagebackref,
    colorlinks,
    citecolor=Mahogany,
    linkcolor=Mahogany,
    urlcolor=Mahogany,
    filecolor=Mahogany
]{hyperref}
\usepackage[capitalize]{cleveref}

\DeclareFontFamily{U}{mathx}{}
\DeclareFontShape{U}{mathx}{m}{n}{<-> mathx10}{}
\DeclareSymbolFont{mathx}{U}{mathx}{m}{n}
\DeclareMathAccent{\widecheck}{0}{mathx}{"71}

\newtheorem{theorem}{Theorem}[section]
\newtheorem{lemma}[theorem]{Lemma}
\newtheorem{proposition}[theorem]{Proposition}
\newtheorem{corollary}[theorem]{Corollary}

\newtheorem{atheorem}{Theorem}

\theoremstyle{definition}
\newtheorem{definition}[theorem]{Definition}

\theoremstyle{remark}
\newtheorem{remark}[theorem]{Remark}
\newtheorem{question}[theorem]{Question}

\numberwithin{equation}{section}
\crefname{equation}{}{}

\newcommand{\sq}[1]{\widetilde{#1}}
\newcommand{\R}{\mathbb{R}}

\newcommand{\C}{\mathbb{C}}

\newcommand{\mc}[1]{\mathcal{#1}}

\newcommand{\ov}[1]{\overline{#1}}
\newcommand{\ang}[1]{\langle #1 \rangle}

\setlist[itemize,1]{label=$\cdot$}

\title{From weighted paraboloid restriction to $k$-stars and distance graphs
}
\author[T. Borges, Y. Ou, M. Pasquariello]{Tainara Borges, Yumeng Ou, Marcus Pasquariello}

\address[T. Borges]{Department of Mathematics, University of Pennsylvania, Philadelphia, PA 19104, USA}\email{tborges@sas.upenn.edu}

\address[Y. Ou]{Department of Mathematics, University of Pennsylvania, Philadelphia, PA 19104, USA}\email{yumengou@sas.upenn.edu}

\address[M. Pasquariello]{Department of Mathematics, Brown University, Providence, RI 02912, USA}\email{marcus\_pasquariello@brown.edu}

\date{\today}

\begin{document}

\begin{abstract}
In this paper, we study pinned $k$-star distance sets associated to compact subsets of $\mathbb{R}^n$, $n\geq 2$. For pins $x_1,\dots,x_k\in E$, the pinned $k$-star distance set is
\[
\Delta_{x_1,\dots,x_k}^{k\text{-star}}(E)
=
\{(|x_1-x|,\dots,|x_k-x|):x\in E\}\subset\mathbb{R}^k.
\]

We obtain improved Hausdorff-dimension thresholds on $E$ guaranteeing that pinned $k$-star distance sets have positive $k$-dimensional Lebesgue measure. The main analytic input is a reformulation of the connection, first observed in \cite{IPPS22}, between $k$-stars in $\mathbb{R}^n$ and pinned dot products on the paraboloid in $\mathbb{R}^{n+1}$. In our framework, \(L^2(\R^k)\) estimates for the densities of pinned \(k\)-star distance measures are reduced to a weighted Fourier extension estimate for the paraboloid whose weight is defined explicitly in terms of Frostman measures on $E$. For $1\leq k<n$, this yields the threshold 
\[\dim(E)>\alpha_{+}(n,k):=\frac{n^2+nk+k}{2n+1}=\frac{n+k-1}{2}+\frac14
+\frac{2k+1}{4(2n+1)}.\]

Using the graph-building machinery of \cite{BFOPR2026}, our positive-measure results for $k$-stars can be used as building blocks for finite distance graph configurations with prescribed pins. As a consequence, we improve the best-known positive-measure thresholds for pinned $k$-simplices in every dimension $n\geq 3$ and for necklace graphs (cycles) in every dimension $n\geq 3$.

We further prove nonempty interior results for $k$-stars. In the special case $k=1$, corresponding to pinned nonempty interior of the distance set $\Delta_{x}(E)=\{|x-y|\colon y\in E\}$, we use a sharper argument to improve the pinned nonempty-interior thresholds of \cite{BFOP2026} in all dimensions $n\geq 4$.

\end{abstract}

\maketitle

\section{Introduction and main results}

A central theme in geometric measure theory is to understand when a fractal set
determines many geometric patterns. The prototypical example is the Falconer
distance problem: given a compact set \(E\subset \mathbb R^n\), $n\geq 2$, one asks how large
\(\dim(E)\) must be in order to ensure that
\[
\Delta(E)=\{|x-y|:x,y\in E\}
\]
has positive Lebesgue measure. A more refined version is the pinned distance
problem, where one fixes a pin $x\in E$ and studies
\[
\Delta_x(E)=\{|x-y|:y\in E\}.
\]
Here the goal is to find dimensional hypotheses guaranteeing that
\(\Delta_x(E)\) has positive measure for at least one, or for many, pins
\(x\in E\), which in particular implies positive measure for $\Delta(E)$. The Falconer distance set conjecture \cite{Falconer85} says that, for the distance set to have positive Lebesgue measure, it suffices to assume the Hausdorff dimension of $E\subset \mathbb{R}^n$ to be larger than $\frac{n}{2}$.

The Falconer distance problem has seen major progress in recent years. In the
plane, Guth, Iosevich, the second listed author, and Wang~\cite{GIOW20} proved that if
\(E\subset \mathbb R^2\) and \(\dim(E)>5/4\), then there exists \(x\in E\) such
that the one-dimensional Lebesgue measure of $\Delta_x(E)$ is positive:
\[
\mathcal L^1(\Delta_x(E))>0.
\]
In dimensions \(n\geq 3\), the best currently known threshold is due to Du, the second listed author,
Ren, and Zhang~\cite{DORZ2023}, who proved the same conclusion under the
condition
\[
\dim(E)>\frac n2+\frac14-\frac{1}{8n+4}.
\]Very recently, Liu \cite{liu2026} proved the Falconer distance conjecture in the plane under additional assumption of regularity of the set $E$.

One may also ask for the stronger conclusion that distance sets have nonempty interior. In the unpinned setting, Mattila and Sj\"olin~\cite{MS99} proved that
\[
\dim(E)>\frac{n+1}{2}
\]
implies \(\operatorname{int}(\Delta(E))\neq\emptyset\). The pinned nonempty-interior problem is more delicate, and the best known thresholds remain higher than their unpinned counterparts. Peres and Schlag~\cite{PS00} showed that, for $n\geq 3$, the condition
\[
\dim(E)>\frac{n+2}{2}
\]
guarantees the existence of \(x\in E\) such that \(\operatorname{int}(\Delta_x(E))\neq\emptyset\). This was recently improved by the first two authors of the present paper, together with Foster and Palsson~\cite{BFOP2026}: they proved that the threshold $7/4$ suffices in the plane, and that in dimension $n=3$ the Peres--Schlag threshold can be lowered from $5/2$ to $12/5$.

More generally, one may study distance vectors associated to finite graphs whose vertices are sampled from \(E\), with some vertices fixed in advance. In this paper, we focus on the pinned $k$-star configurations, which play a fundamental role in the theory of general distance graphs. First, it is a natural generalization of the distance set, as the $k=1$ case corresponds exactly to the aforementioned Falconer distance problem. Second, results of $k$-stars can be used as building blocks to derive corresponding estimates for any pinned graph configurations. We give a more detailed discussion of this application towards the end of the introduction. 

More precisely, by a $k$-star we mean the pinned graph with $k+1$ vertices $\{v_1,\dots ,v_k,v_{k+1}\}$ where the first $k$ vertices are pinned, and the edge set is $\{(v_i,v_{k+1})\colon 1\leq i\leq k\}$. See Figure \ref{fig: stargraph} for a $7$-star. The distance set associated with a pinned $k$-star is then 
\begin{equation}\label{def:kstar}
    \Delta^{k-star}_{x_1,x_2,\dots, x_k}(E):=\{(|x_1-x_{k+1}|,|x_2-x_{k+1}|,\dots, |x_k-x_{k+1}|)\colon x_{k+1}\in E\}.
\end{equation}

\begin{figure}[h]
    \centering
    \begin{tikzpicture}[scale=1.5, 
    every node/.style={circle, draw, minimum size=6pt, inner sep=0pt},
    label distance=1mm]

\node[fill=black, label=below:$v_8$] (C) at (0.2,0) {};

\foreach \i in {1,...,7} {
    \node[fill=magenta, label=above:$v_{\i}$] 
        (A\i) at ({cos(360/7*\i)}, {sin(360/7*\i)}) {};
    \draw (C) -- (A\i);
}

\end{tikzpicture}
    \caption{$S_7$, the $7$-star graph, with pins at each leaf.}
    \label{fig: stargraph}
\end{figure}

A simplified version of the main theorem of the paper is the following.

\begin{theorem}\label{thm: simple main}
Assume $n\geq 2$ and $1\leq k<n$. Let $E \subset \R^n$ be a compact set with Hausdorff dimension 
     \[\dim(E) > \alpha_{+}(n,k)=\frac{n^2 + nk + k}{2n + 1}=\frac{n + k - 1}{2} + \frac{1}{4} + \frac{2k + 1}{4(2n + 1)}.\]
     
Then there exist pins 
$x_1,\dots,x_k\in E$ such that
\[
\mathcal L^k\bigl(\Delta^{k\text{-star}}_{x_1,\dots,x_k}(E)\bigr)>0.
\]
\end{theorem}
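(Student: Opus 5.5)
We will prove that some pushforward of a Frostman measure is absolutely continuous with an $L^2$ density. Fix $s$ with $\alpha_+(n,k)<s<\dim(E)$ and a Frostman measure $\mu$ on $E$ with $\mu(B(x,r))\lesssim r^s$. Since $\alpha_+(n,k)>(n+k-1)/2\geq (k-1)$, we may, after the standard reduction, take $\mu$ supported on a piece where the pins are separated. For pins $x_1,\dots,x_k$ we consider the pinned $k$-star measure $\nu_{x_1,\dots,x_k}$ on $\R^k$, defined by
\[
\int g\, d\nu_{x_1,\dots,x_k}=\int g(|x_1-x|^2,\dots,|x_k-x|^2)\,d\mu(x).
\]
Squared distances are harmless, since $t\mapsto t^2$ is a local diffeomorphism away from $0$. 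It suffices to show
\[
\int \|\nu_{x_1,\dots,x_k}\|_{L^2(\R^k)}^2\, d\mu(x_1)\cdots d\mu(x_k)<\infty.
\]
This forces $\nu_{x_1,\dots,x_k}\in L^2$ for $\mu^{k}$-a.e.\ choice of pins. Such a measure has support of positive measure, which gives $\mathcal L^k(\Delta^{k\text{-star}}_{x_1,\dots,x_k}(E))>0$.

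First we lift to the paraboloid. Write $|x_j-x|^2=|x_j|^2-2x_j\cdot x+|x|^2$ and set $\bar x=(x,|x|^2)\in\R^{n+1}$. Then the vector of squared distances equals, up to the pin-dependent shift $(|x_j|^2)_j$, the vector of dot products $(\bar x\cdot(-2x_j,1))_{j}$. So $k$-star distances become pinned dot products with the lifted measure $\bar\mu$ on the paraboloid $P$. By Plancherel in $\R^k$,
\[
\|\nu_{x_1,\dots,x_k}\|_2^2=\int_{\R^k}\Big|\int e^{-2\pi i\,\xi\cdot(\bar x\cdot w_j)_j}\,d\mu(x)\Big|^2 d\xi ,
\]
where $w_j=(-2x_j,1)$. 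The inner integral is $\widehat{\bar\mu}(\sum_j\xi_j w_j)$. Averaging over the pins then turns the left side into $\int_{\R^{n+1}}|\widehat{\bar\mu}(\eta)|^2\,W(\eta)\,d\eta$. Here $W$ is the pushforward of $\mu^{k}\times d\xi$ under $(x_1,\dots,x_k,\xi)\mapsto\sum_j\xi_j(-2x_j,1)$, which is explicitly a $k$-fold "cone over $\mu$" weight, with a Jacobian factor accounting for the $k$-dimensional span. Writing $\widehat{\bar\mu}$ as the extension operator $E_P$ applied to the density $\mu$ (by duality with $f\,d\sigma$, after mollifying $\mu$ at scale $R^{-1}$ and working dyadically with $|\eta|\sim R$), the problem reduces to the weighted extension estimate
\[
\int_{B_R}|E_P f|^2\,W\lesssim R^{\epsilon}R^{\gamma}\|f\|_{2}^2 .
\]
We then need to sum over $R$ against the trivial bound $\|f\|_2^2\lesssim R^{n-s}$ for the mollified Frostman density.

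The second step is to understand the weight $W$. Since $\mu$ is $s$-dimensional, $W$ restricted to $B_R$ is roughly an $(s+\dots)$-dimensional measure. At scale $1$ near frequency $R$, it is a union of $k$-planes through the origin spanned by lifted points, and its dimension is about $ks+k$ minus an overlap correction. We will record two inputs: its global mass and growth, $W(B(\eta,r))\lesssim r^{\alpha}$ with $\alpha$ determined by $s$ and $k$, and its behaviour on slabs and tubes adapted to $P$.

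The main step, and the expected obstacle, is the weighted restriction estimate itself. I would follow the Du--Zhang fractal $L^2$ estimate, with the refinement used by Du--Ou--Ren--Zhang. This means a broad-narrow decomposition with refined Strichartz in the broad part, and for the narrow part, lower-dimensional parabolic rescaling in which the $k$-plane structure of $W$ helps. The numerology should give
\[
\gamma=\frac{\alpha}{n+1}\cdot\text{(DZ exponent)}
\]
for $P\subset\R^{n+1}$. Balancing $R^{\gamma}\cdot R^{n-s}\cdot(\text{Jacobian factor }R^{-?})$ against summability in $R$ yields the condition $s>\frac{n^2+nk+k}{2n+1}$.

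I expect the hardest part to be verifying that $W$ satisfies the hypotheses of the fractal extension estimate with the correct exponent, and in particular controlling its concentration near low-dimensional subspaces where the pin configurations degenerate. My first attempt there would be to excise near-degenerate $k$-tuples of pins, which costs nothing since $\mu^k$ gives them small mass, and to use the Frostman condition directly for the remaining ones.
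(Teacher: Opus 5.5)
Your reduction matches the paper's. The identity $|x_j-x|^2=|x_j|^2+(x,|x|^2)\cdot(-2x_j,1)$ turns the pinned $k$-star into pinned dot products on the paraboloid; your directions $(-2x_j,1)$ are exactly those of the lines $\Phi(x_j,\cdot)$ in Theorem~\ref{thm:parab_extension}. Plancherel in $\R^k$ followed by averaging over the pins then gives $\int|\widehat{\bar\mu}|^2\,dW$, where $W=W_1*\cdots*W_k$ and $W_j$ is the pushforward of $\mu\times d\xi$ under $(x,\xi)\mapsto\xi(-2x,1)$. Using squared distances neatly avoids the weight $W(\bm{x},\bm{t})$ of Lemma~\ref{lemma: equalityreplacingliuidentity}. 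Also, $W$ is just a measure, so no Jacobian factor enters.

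The gap is that the step which actually produces the threshold is left blank. Your phrases ``$(s+\dots)$-dimensional'', ``$ks+k$ minus an overlap correction'', ``Jacobian factor $R^{-?}$'' and ``$\gamma=\frac{\alpha}{n+1}\cdot(\text{DZ exponent})$'' are placeholders. So the claim that balancing gives $s>\frac{n^2+nk+k}{2n+1}$ is asserted, not derived. Moreover, the placeholder $R^{-?}$ carries all the content. If all you know is that $W|_{A_R}$ is $\eta$-Frostman with an $O(1)$ constant, Du--Zhang gives $R^{\eta/(n+1)}$, and $\sum_R R^{\eta/(n+1)}R^{n-s}$ diverges for every $s\le n$. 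Your heuristic dimension $ks+k$ (which exceeds $n+1$ for $k\ge 2$ in the relevant range) does not lead to the right normalization.

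The missing ingredient is Theorem~\ref{thm:k_weight_frostman}: $W|_{A_R}=R^{k-1-s}\cdot(\text{an }(s+1)\text{-Frostman measure})$.

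\emph{Single cone.} For $|y|\sim R$ and $r\le cR$, the preimage of $B(y,r)$ under $(x,\xi)\mapsto\xi(-2x,1)$ lies in an $(r/R)\times\cdots\times(r/R)\times r$ tube. Hence $W_j(B(y,r)\cap A_R)\lesssim (r/R)^s r=R^{-s}r^{s+1}$.

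\emph{Convolution.} Quantitative transversality of the pins (your excision step; Lemma~\ref{lemma: transversalityreduction}) gives $|y_1|+\cdots+|y_k|\lesssim|y_1+\cdots+y_k|$ on the supports (Lemma~\ref{lem:weight_support}). So whenever $y_1+\cdots+y_k\in B(y,r)\cap A_R$, all $|y_j|\lesssim R$ and some $|y_\ell|\gtrsim R$. Freeze the other $k-1$ variables, each of mass $\lesssim R$ on $B(0,CR)$, and apply the single-cone bound to $y_\ell$. This yields $W(B(y,r)\cap A_R)\lesssim R^{k-1-s}r^{s+1}$. Only one factor's fractal structure is used, not a $ks+k$ count.

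After this, no new restriction theory is needed. Theorem~\ref{thm:du_zhang} for $\mathbb{P}^n\subset\R^{n+1}$, used as a black box with $\eta=s+1$, gives $\gamma=\frac{s+1}{n+1}+\epsilon$. Then
\[
\sum_R R^{k-1-s}\,R^{\frac{s+1}{n+1}+\epsilon}\,R^{n-s}<\infty
\iff
2s>n+k-1+\frac{s+1}{n+1}+\epsilon,
\]
which is $s>\alpha_+(n,k)$ for small $\epsilon$. The broad--narrow and refined Strichartz program you propose is not what yields this exponent, and nothing in your sketch determines what it would give.
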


\begin{remark}

Observe that $\alpha_{+}(n,k)=n$ when $n=k$. The condition $k<n$ cannot be removed, since it was observed in \cite{BFOPR2026} that a pinned $k$-star result with nontrivial threshold is only possible in $\R^n$ when $n>k$. Moreover, the result is mostly interesting for $k\geq 2$ and $n\geq 3$, since when $k=1$, the current thresholds for the Falconer distance problem are smaller than $\alpha_{+}(n,1)$ \cite{GIOW20,DORZ2023}.
\end{remark}

As far as we know, the study of the $k$-star configurations was initiated fairly recently in \cite{IPPS22}. And the main motivation was to understand the Falconer type problems for $k$-simplices. Very recently, in \cite{BFOPR2026}, it was further observed that $k$-stars can be used as building blocks to study any pinned distance graphs, which adds significant importance to the study of the $k$-star problem. In fact, the graph building mechanism in \cite{BFOPR2026} is quite delicate, and will need a stronger and more technical version of Theorem \ref{thm: simple main} as the building block. To state the full version of our main theorem, we first recall some definitions from \cite{BFOPR2026}. 

Given a simple graph $G$ with vertex set $\mathcal{V}$ and edge set $\mathcal{E}\subseteq 2^{\mathcal{V}}$, pick a set of vertices to pin, $\mathcal{P}=\{v_1,v_2,\dots ,v_m\}$ where $m\geq 0$ ($m=0$ corresponding to the unpinned case). Let $\mathcal{V}\setminus \mathcal{P}=\{v_{m+1},v_{m+2},\dots ,v_{|\mathcal{V}|}\}$ be the set of unpinned vertices. Then every edge can be identified with a pair $(i,j)$, some $1\leq i\neq j\leq |\mathcal{V}|$. We will further assume that no edge has been fully pinned, that is, $\mathcal{P}$  does not contain any pair of vertices $v_i,v_j$ such that $(i,j)\in \mathcal{E}$.

For distinct points $x_1,\dots,x_m\in \mathbb{R}^n$, 
we define the \emph{$m$-pinned $G$-set} of $E$ by
\begin{equation}\label{def: graphdistanceset}
\Delta^{G}_{\mathcal{P},x_1,\dots,x_m}(E)
:=\bigl\{ (|x_i-x_j|)_{(i,j)\in \mathcal{E}}
:\; x_{m+1},x_{m+2},\dots,x_{|\mathcal{V}|}\in E \text{ distinct}\bigr\}\subset \R^{|\mathcal{E}|}.
\end{equation}

In words, such a distance set captures vectors of edge lengths obtained by fixing the $m$ chosen pins in the graph at certain locations $x_1,x_2, \dots ,x_m$ in $E$, and by allowing the other unpinned vertices from the graph to be sampled anywhere in $E$. 

The notion of admissibility for pinned graphs was introduced in \cite{BFOPR2026}. We recall it here.

\begin{definition}[$k$-admissible pinned graphs] Let $(G,\mathcal{P})$ be a pinned graph with pins in $\mathcal{P}=\{v_i\}_{i=1}^{m}$. For $k\in\mathbb N$, a pinned graph $(G,\mathcal{P})$ is \emph{$k$-admissible} if there is a way of listing the unpinned vertices $v_{m+1},v_{m+2},\dots ,v_{|\mathcal{V}|}$ such that the back-degree is always bounded by $k$, that is,
\[
d_{<i}(v_i):=\big|\{\,u\in\{v_1,\dots,v_{i-1}\}:\ (u,v_i)\in\mathcal E\,\}\big|\le k,\,\forall i>m.
\]
\end{definition}

\begin{theorem}[\cite{BFOPR2026}]\label{thm: structuralthmforgraphs}
Let $G$ be a graph with vertex set $\mathcal{V}$ and edge set $\mathcal{E}$. Let $\mathcal{P}\subset \mathcal{V}$ be such that no pair of vertices in $\mathcal{P}$ is connected by an edge in $\mathcal{E}$ and denote $m=|\mathcal{P}|$. Assume that $(G,\mathcal{P})$ is a $k$-admissible pinned graph. Let $n\geq 2$. Then, for $1\leq k<n$ and any compact set $E\subset \mathbb{R}^n$ with Hausdorff dimension larger than $\frac{n+k}{2}$, there are distinct points $x_1,\cdots, x_m\in E$, such that the pinned distance set $\Delta^{G}_{\mathcal{P}, x_1,\cdots,x_m}(E)$ as defined in (\ref{def: graphdistanceset}) has positive $|\mathcal{E}|$-dimensional Lebesgue measure.

\end{theorem}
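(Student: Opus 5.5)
The plan is an induction on the unpinned vertices, taken in the admissible order, with pinned $k$-star estimates as the building block. Put Frostman measures on $E$, or on suitably separated compact pieces of $E$, one for each vertex; separation of the pieces makes all sampled points distinct. Then place the pins $x_1,\dots,x_m$ at generic points. For each unpinned vertex $v_i$, $i>m$, the edges joining $v_i$ to earlier vertices form a star with $d_{<i}(v_i)\le k$ leaves. Those leaves are either pins or vertices already sampled.

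The central claim, proved by induction on $i$, is an $L^2$ estimate. Fix the earlier vertices $y_1,\dots,y_{i-1}$. Consider the pushforward of $\mu_i$ under $y\mapsto(|y_j-y|)_{j\in N_{<}(i)}$. The claim is that this measure has an $L^2$ density on $\R^{d_{<i}(v_i)}$, with norm controlled uniformly in $(y_1,\dots,y_{i-1})$ outside a set of small measure. This is exactly the pinned $k$-star estimate with pins $y_j$. The hypothesis $\dim(E)>\frac{n+k}{2}$ supplies it, for instance through the Mattila-type energy bound or through a pinned spherical average estimate of the form
\[
\int\!\!\int \bigl|\widehat{\mu}(r\omega)\bigr|^2\,d\sigma(\omega)\,r^{n-1}\,dr\ \text{weighted by } \prod_j \mu(y_j)\,.
\]
The $L^2$ bound must hold for the joint measure in the pins, not only for a single pin. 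For that reason I would prove an averaged statement: the integral over the leaves of the squared $L^2$ norm is finite. This follows from Plancherel together with a decay estimate for $\widehat{\sigma}$ in each radial variable, where $k<n$ ensures enough decay.

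Next, assemble these estimates. Let $\nu$ be the pushforward of $\mu_{m+1}\times\dots\times\mu_{|\mathcal{V}|}$ under the full edge-length map. Disintegrate it vertex by vertex, in order. Each conditional law of the new block of coordinates has an $L^2$ density given the previous coordinates. Fubini together with Cauchy--Schwarz then shows that $\nu$ is absolutely continuous on $\R^{|\mathcal{E}|}$ for $\mu^m$-a.e.\ choice of pins; equivalently, $\nu$ has a density in $L^1$, and this is all that is needed. A nonzero absolutely continuous measure supported on $\Delta^G_{\mathcal{P},x_1,\dots,x_m}(E)$ forces that set to have positive measure.

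The main obstacle is uniformity. The pinned star estimate is needed for pins that are themselves random, namely earlier unpinned vertices, and it must hold outside exceptional sets small enough to survive the whole iteration. The back-degree bound $k<n$ keeps each star estimate nondegenerate. My first step would be to obtain the star estimate in averaged form: $\int\|f_{y}\|_2^2\,d\mu^{d}(y)<\infty$. I would then propagate it by Fubini rather than by pointwise bounds. Finally, I would use Chebyshev to select a positive-measure good set of pins, remove near-diagonal configurations using separated pieces of $E$, and conclude.
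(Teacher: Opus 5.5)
Your outline is correct and follows the same route the paper attributes to \cite{BFOPR2026}: the abundance of $L^2$ pinned $k$-stars at threshold $\frac{n+k}{2}$ (Theorem~\ref{thm:k starinIPPS}) is applied to each vertex along the admissible ordering on separated pieces, and Fubini over null sets then gives absolute continuity of the full edge-length measure for generic pins (a.e.\ absolute continuity of each conditional star law already suffices, so your uniformity and Chebyshev steps are unnecessary). Two points need tightening: the star input should be quoted from \cite{IPPS22}, or obtained from Theorem~\ref{thm:pinned_thm} with the trivial weighted bound $\gamma=1$, rather than from the vague per-variable decay of $\widehat{\sigma}$; and since that input is stated for transverse pin pieces, you should first refine your pieces with Lemma~\ref{lemma: transversalityreduction} so that every $k$ of them are transverse, as in the remark following that lemma.
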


The threshold $\frac{n+k}{2}$ in the theorem above is intrinsically related to the previously best known threshold for $k$-stars obtained in \cite{IPPS22} and could be upgraded to the best thresholds for the pinned Falconer problem when $k=1$.

Before recalling where the threshold $\frac{n+k}{2}$ comes from, we will need a few more definitions.

\begin{definition}
Given a compact set \(E\subset \R^n\), an \(\alpha\)-Frostman measure \(\mu\) on \(E\) is a nonzero finite Borel measure supported on \(E\) satisfying
\[
\mu(B(x,r))\lesssim r^\alpha,
\qquad \text{for all } x\in \R^n \text{ and } r>0.
\]
By Frostman's lemma \cite[Theorem 2.7]{Mattilabook2015}, such measures exist for every \(0<\alpha<\dim(E)\).
\end{definition}

\begin{definition}\label{def: restrictedmeasure}
    Given a Frostman measure $\mu$ supported on a compact $E\subset \R^n$ and $A$ a Borel set in $\R^n$, denote by $\mu_{A}$ the restriction of $\mu$ to $A$. That is, $\mu_{A}(B)=\mu(B\cap A)$ for any Borel set $B\subset \R^n$. If $A$ is compact set with $\mu(A)>0$, then the new measure $\mu_A$ is a Frostman measure supported on $A$.

\end{definition}


\begin{definition}\label{transversality}
   Let $1\leq k\leq n$. We will say that a collection $\{E_1,\dots, E_k\}$ of compact sets of $\R^n$ is transverse if the following holds:
   \[
 |x_1\wedge\cdots\wedge x_k|\geq c
 \qquad\text{for all }x_i\in E_i.
\]
Here \( |x_1\wedge\cdots\wedge x_k| \) denotes the \(k\)-dimensional volume of the parallelepiped spanned by the vectors \(x_1,\dots,x_k\), viewed as vectors based at the origin. In particular, $x_1,x_2,\dots, x_k$ are linearly independent.
\end{definition}

\begin{definition}[Abundance of \(L^2\) pinned \(k\)-stars]
Let \(E\subset \mathbb R^n\) be compact and let \(\mu\) be a finite Borel
measure supported on \(E\). We say that \(E\) has an
\emph{abundance of \(L^2\) pinned \(k\)-stars relative to \(\mu\)}
if the following holds: 

For every collection of pairwise separated compact sets
\(E_1,\dots,E_{k+1}\subset E\setminus\{0\}\) with \(\mu(E_i)>0\) for each \(i\) and $\{E_1,E_2,\dots ,E_k\}$ transverse, one has $$(d_{x_1,\dots,x_k})_*(\mu_{E_{k+1}})\in L^2(\mathbb R^k)\text{ for }\mu_{E_1}\times\cdots\times\mu_{E_k}\text{ a.e. }(x_1,\dots ,x_{k})\in E_1\times \dots \times E_{k},$$ for \(\mu_{E_i}\) as in Definition \ref{def: restrictedmeasure} and $d_{x_1,x_2,\dots ,x_k}(y):=(|x_1-y|,|x_2-y|,\dots ,|x_k-y|)$. 
\end{definition}

In the definition above, compact sets \(E_1,\dots,E_m\subset\mathbb R^n\) are said to be
\emph{pairwise separated} if, for each \(i\neq j\), there exists \(c_{ij}>0\) such that
\[
|x_i-x_j|\geq c_{ij}
\qquad
\text{for all }x_i\in E_i,\ x_j\in E_j.
\]
Equivalently, since the sets are compact, they are pairwise disjoint with positive mutual
distance.

\begin{remark}
Whenever we say that there is an \emph{abundance of pins}
\((x_1,\dots,x_k)\in E^k\) with respect to \(\mu\) for which a given
property holds, we mean the following: for every collection of pairwise
separated compact sets \(E_1,\dots,E_k\subset E\) with \(\mu(E_i)>0\) and $\{E_1,E_2,\dots ,E_k\}$ transverse, the property holds for \(\mu_{E_1}\times\cdots\times\mu_{E_k}\) almost every \((x_1,\dots,x_k)\in E_1\times\cdots\times E_k\).
\end{remark}

With these definitions in mind, the result for $k$-stars proved in \cite{IPPS22} reads as follows.

\begin{theorem}[\cite{IPPS22}]\label{thm:k starinIPPS}

Assume $n\geq 2$ and $1\leq k<n$. Let $E\subset \mathbb R^n$ be compact with
$\dim(E)>\frac{n+k}{2}$.
Let $\mu$ be an $\alpha$-Frostman measure on $E$, where
$\alpha>\frac{n+k}{2}.$
Then $E$ has an abundance of $L^2$ pinned $k$-stars relative to $\mu$. Consequently, there is an abundance of pins 
$(x_1,\dots,x_k)\in E^k$ such that,
\[
\mathcal L^k\bigl(\Delta^{k\text{-star}}_{x_1,\dots,x_k}(E)\bigr)>0.
\]
\end{theorem}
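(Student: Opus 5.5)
The plan is to show $L^2$ integrability of the pinned $k$-star measure $\nu_{x}:=(d_{x_1,\dots,x_k})_*(\mu_{E_{k+1}})$ on average over the pins. The positive-measure conclusion then follows at once. If $\nu_x\in L^2(\R^k)$ for $\mu_{E_1}\times\cdots\times\mu_{E_k}$-a.e.\ $x$, then $\nu_x$ is a nonzero absolutely continuous measure supported on $\Delta^{k\text{-star}}_{x_1,\dots,x_k}(E)$. Hence that set has positive $\mathcal L^k$ measure for a.e.\ pin tuple, which is exactly an abundance of pins. So the task reduces to proving
\[
\int \|\nu_{x_1,\dots,x_k}\|_{L^2(\R^k)}^2\,d\mu_{E_1}(x_1)\cdots d\mu_{E_k}(x_k)<\infty .
\]

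To compute the $L^2$ norm, I will pass to squared distances. The map $t\mapsto t^2$ is bi-Lipschitz on the separated range, so it suffices to bound the pushforward under $y\mapsto(|x_i-y|^2)_i$. I write $|x_i-y|^2=|x_i|^2-2x_i\cdot y+|y|^2$. Subtracting the first coordinate from the others gives a linear change of variables, invertible thanks to transversality and separation. In the new variables the map becomes
\[
y\mapsto \bigl(|y|^2-2x_1\cdot y,\;(x_1-x_j)\cdot y\bigr)_{j\ge 2}.
\]
On the Fourier side, with frequency $\xi\in\R^k$, the Fourier transform of this pushforward is an oscillatory integral of $\mu_{E_{k+1}}$ against $e^{-2\pi i(\xi_1|y|^2+y\cdot L_x\xi)}$, where $L_x$ is linear in the pins. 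By Plancherel,
\[
\|\nu_x\|_2^2\approx\int_{\R^k}\Bigl|\int e^{-2\pi i(\xi_1|y|^2+y\cdot L_x\xi)}\,d\mu_{E_{k+1}}(y)\Bigr|^2 d\xi .
\]
I would estimate this more crudely, in the spirit of Mattila's approach. Expanding the square and integrating in $\xi$ produces, for fixed $y,z$, a kernel in the variables $(|y|^2-|z|^2,(y-z)\cdot(\text{differences of pins}))$. Integrating over the pins in turn smooths this kernel.

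The main step is the energy bound. Expanding $\|\nu_x\|_2^2$ gives $\iint K_x(y,z)\,d\mu(y)\,d\mu(z)$. Here $K_x$ is (the limit of) $\varepsilon^{-k}\mathbf 1\{\,\bigl||x_i-y|-|x_i-z|\bigr|<\varepsilon\ \forall i\,\}$. I then integrate in $x_1,\dots,x_k$ with respect to $\mu^k$, using the Frostman condition on each pin separately. For fixed $y\neq z$, the condition $\bigl||x_i-y|-|x_i-z|\bigr|<\varepsilon$ confines $x_i$ to an $\varepsilon/|y-z|$-neighbourhood of a hyperboloid-type surface. This neighbourhood can be covered by about $(|y-z|/\varepsilon)^{n-1}$ balls of radius $\varepsilon/|y-z|$, so its $\mu$-measure is $\lesssim(\varepsilon/|y-z|)^{\alpha-(n-1)}$. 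Actually this is the slab estimate $\mu(\text{slab of width }\delta)\lesssim\delta^{\alpha-n+1}$, valid because the surfaces have bounded curvature. Taking the product over the $k$ pins and dividing by $\varepsilon^k$ gives
\[
\varepsilon^{k(\alpha-n)}|y-z|^{-k(\alpha-n+1)},
\]
which fails as $\varepsilon\to0$. The per-pin counting is therefore too lossy, and I expect this to be the main obstacle. The fix is to use Fourier decay rather than geometry.

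Following Mattila's argument, I would write
\[
\int\|\nu_x\|_2^2\,d\mu^k(x)\lesssim\int|\widehat\mu(\eta)|^2\,\bigl(\text{spherical-average-type quantity}\bigr)\,d\eta .
\]
I would use the $L^2$ spherical average decay $\int_{S^{n-1}}|\widehat{\mu}(r\omega)|^2d\omega\lesssim r^{-(\alpha-1)}$, or the trivial bound $r^{-\alpha}$ via energy. Each pin contributes one radial frequency variable. Transversality makes the $k$ radial directions independent, so the frequency integral factorizes into roughly $k$ one-dimensional integrals. Combined with the energy $I_\beta(\mu)<\infty$ for $\beta<\alpha$, convergence requires $2\alpha>n+k$. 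I will verify the counting: the $k$-dimensional frequency space of $\nu_x$ adds $k$ to the usual Mattila threshold $n/2$-type count, giving $\alpha>(n+k)/2$.
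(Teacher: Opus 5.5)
Your Fourier-side reformulation is sound. After passing to squared distances and subtracting coordinates, $\widehat{\nu_x}(\xi)$ is the paraboloid extension $\mathcal{E}\mu_{E_{k+1}}$ evaluated at $-(L_x\xi,\xi_1)\in\R^{n+1}$, a point of the $k$-plane spanned by the lifted pins $(-2x_j,1)$. This is Lemma~\ref{lemma: equalityreplacingliuidentity} in a different parametrization. The paper cites this statement from \cite{IPPS22} rather than reproving it, but its Theorem~\ref{thm:pinned_thm}, fed the trivial exponent $\gamma=1$, recovers it.

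The gap is that you never actually bound $\int\|\nu_x\|_2^2\,d\mu^k(x)$. The slab count fails, as you observe, and the proposed fix is not an argument. The quantity in your Plancherel formula is $\mathcal{E}\mu_{E_{k+1}}$, the Fourier transform of the lift of $\mu_{E_{k+1}}$ to the paraboloid in $\R^{n+1}$. It is sampled on a $k$-plane that depends jointly on all the pins, and you give no mechanism by which averages of $|\widehat{\mu}|^2$ over spheres in $\R^n$ would control it. Nor does $|\mathcal{E}\mu_{E_{k+1}}(-(L_x\xi,\xi_1))|^2$ split into $k$ one-dimensional factors, one per pin. The threshold $2\alpha>n+k$ is matched by numerology, not derived. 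Two minor points: the energy gives the bound $r^{-\alpha}$ for spherical averages only on average over dyadic ranges of $r$; and the coordinate subtraction is unimodular, so it does not use transversality.

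The missing step is to average over the pins on the Fourier side, which turns the problem into a weighted extension estimate.
\begin{itemize}
\item Reparametrize the plane as $\eta=\sum_j c_j(-2x_j,1)$, so that each pin carries its own coefficient. Then $\int\|\nu_x\|_2^2\,d\mu^k(x)\approx\int_{\R^{n+1}}|\mathcal{E}\mu_{E_{k+1}}|^2\,dw$, where $w$ is the pushforward of $\mu_{E_1}\times\cdots\times\mu_{E_k}\times dc$. This $w$ is a convolution of measures living on lines through the origin, as in Theorem~\ref{thm:parab_extension}.
\item Transversality is what gives $|\eta|\gtrsim\sum_j|c_j|$ uniformly (Lemma~\ref{lem:weight_support}). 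So on the shell $|\eta|\sim R$, some $|c_\ell|\gtrsim R$.
\item The $\alpha$-Frostman condition for that single pin then yields $w(B(\eta,1))\lesssim R^{-\alpha}\cdot R^{k-1}$. The other $k-1$ lines contribute only their trivial mass $O(R)$ each (Theorem~\ref{thm:k_weight_frostman}).
\item Combine this with the trivial bound $\int_{B(0,2R)}|\mathcal{E}\mu_{E_{k+1}}|^2\lesssim R\cdot R^{n-\alpha}$. This follows from local constancy at unit scale plus the Frostman energy bound on each horizontal slice $t=\mathrm{const}$.
\item Each dyadic shell then contributes $\lesssim R^{n+k-2\alpha}$, which is summable exactly when $\alpha>\frac{n+k}{2}$.
\end{itemize}
Without this Frostman property of the pin-generated weight, your sketch contains no step that produces the exponent.
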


It was shown in \cite{BFOPR2026} that a \(k\)-star result with the \(L^2\) strength stated above transfers to positive-measure results for \(k\)-admissible pinned graphs. Since improvements for $k$-stars can lead to improved thresholds for Falconer type results for many interesting graphs, the main motivation for this work is to improve the dimensional thresholds in Theorem \ref{thm:k starinIPPS}. Our main theorem is the following improvement, which implies Theorem \ref{thm: simple main}.

\begin{atheorem}\label{thm:main}
     Assume $n\geq 2$ and $1\leq k<n$. Let $E \subset \R^n$ be a compact set with Hausdorff dimension 
     \[\dim(E) > \alpha_{+}(n,k)=\frac{n^2 + nk + k}{2n + 1}=\frac{n + k - 1}{2} + \frac{1}{4} + \frac{2k + 1}{4(2n + 1)}.\]
     
     Let $\mu$ be an $\alpha$-Frostman measure on $E$, where
$\alpha>\alpha_{+}(n,k).$
Then $E$ has an abundance of $L^2$ pinned $k$-stars relative to $\mu$. Consequently, there is an abundance of pins 
$(x_1,\dots,x_k)\in E^k$ with respect to $\mu$ such that,
\[
\mathcal L^k\bigl(\Delta^{k\text{-star}}_{x_1,\dots,x_k}(E)\bigr)>0.
\]
\end{atheorem}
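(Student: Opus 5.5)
The plan is to follow the abstract's strategy: lift $k$-stars to the paraboloid, reduce the $L^2$ bound for the pinned densities to a weighted extension estimate, and then prove that estimate with a weight built from Frostman measures.

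\textbf{Step 1: lift to the paraboloid.} Fix pairwise separated $E_1,\dots,E_{k+1}\subset E\setminus\{0\}$ with $\{E_1,\dots,E_k\}$ transverse. Write $|x_i-y|^2=|x_i|^2-2x_i\cdot y+|y|^2$. Since the pins are fixed, it suffices to show that the pushforward of $\mu_{E_{k+1}}$ under
\[
y\mapsto \bigl(x_i\cdot y-\tfrac12|y|^2\bigr)_{i=1}^k
\]
lies in $L^2(\R^k)$. Indeed, away from $0$ the maps $t\mapsto t^2$ and $t\mapsto \sqrt t$ preserve $L^2$ of densities locally. Lift $y$ to $\tilde y=(y,|y|^2)$ on the paraboloid $P\subset\R^{n+1}$ and $x_i$ to $\tilde x_i=(x_i,-\tfrac12)$. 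Then $x_i\cdot y-\tfrac12|y|^2=\tilde x_i\cdot\tilde y$, so we are studying pinned dot products $\tilde y\mapsto(\tilde x_1\cdot\tilde y,\dots,\tilde x_k\cdot\tilde y)$ against the lifted measure $\tilde\mu=(\mathrm{id},|\cdot|^2)_*\mu_{E_{k+1}}$ on $P$.

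\textbf{Step 2: Plancherel.} For the density $f_{\mathbf x}$ of this pushforward, Plancherel gives
\[
\|f_{\mathbf x}\|_2^2=\int_{\R^k}\Bigl|\widehat{\tilde\mu}\Bigl(\sum_i \xi_i\tilde x_i\Bigr)\Bigr|^2d\xi .
\]
Integrate in $\mu_{E_1}\times\cdots\times\mu_{E_k}(\mathbf x)$. Transversality makes the map $(\xi,\mathbf x)\mapsto\sum_i\xi_i\tilde x_i$ behave like a change of variables, so
\[
\int \|f_{\mathbf x}\|_2^2\,d\mu^{k}(\mathbf x)\lesssim\int_{\R^{n+1}}|\widehat{\tilde\mu}(\eta)|^2\,w(\eta)\,d\eta .
\]
Here $w$ is the pushforward weight: $w(\eta)$ is roughly $|\eta|^{-k}$ times the $\mu^k$-mass of $k$-tuples whose lifted span passes through $\eta$. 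Frostman's condition then gives the dyadic bound $\int_{B(\eta_0,1)\cap\{|\eta|\sim R\}}w\lesssim R^{k-\alpha}$-type estimates, and more refined bounds on $R^{-1}$-thickened neighbourhoods of $k$-planes.

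\textbf{Step 3: dualize to a weighted extension estimate.} Decompose $\tilde\mu$ into $R^{-1}$-caps on $P$. Since $\widehat{\tilde\mu}$ on $|\eta|\sim R$ is an extension $Eg$ of a function $g$ on $P$ with $\|g\|_2^2\lesssim R^{n-\alpha}$-ish, one needs
\[
\int_{|\eta|\sim R}|Eg|^2w\lesssim R^{-\epsilon}\,(\text{Frostman normalization}).
\]
I would prove this by interpolating two bounds.
\begin{enumerate}
\item \emph{Trivial $L^\infty$/energy bound.} This uses $\|w\|_\infty\lesssim R^{-?}$ on unit balls and recovers the \cite{IPPS22} threshold $\tfrac{n+k}{2}$.
\item \emph{Refined bound.} Combine the Stein--Tomas/decoupling $L^{p}$ bound for $Eg$ with $p=\tfrac{2(n+2)}{n}$ in $\R^{n+1}$, or better a weighted (fractal) $L^2$ restriction estimate of Du--Zhang type, with the decay of $\tilde\mu$ inherited from $\alpha$-dimensionality, using H\"older against $w$ and bounding $\|w\|_{L^{p'/ (p'-?)}}$ through the Frostman ball condition.
\end{enumerate}
Balancing the exponents should yield the condition $\alpha>\frac{n^2+nk+k}{2n+1}$. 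The denominator $2n+1$ is suggestive of a Stein--Tomas or $L^2$-weighted exponent in $n+1$ dimensions.

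\textbf{Main obstacle.} The key difficulty is Step 3: controlling $\int|Eg|^2w$ with $w$ defined only implicitly through $\mu^k$. One must find the right geometric regularity of $w$ (its mass on tubes and slabs, not merely on balls) so that a weighted restriction estimate applies. My first attempt would be to verify that $w$ restricted to $|\eta|\sim R$ is a $\gamma$-dimensional weight with $\gamma=\alpha+k-n$ roughly. I would then invoke the Du--Zhang fractal $L^2$ estimate $\|Eg\|_{L^2(w)}^2\lesssim R^{\gamma/(n+2)\cdot\ldots}\|g\|_2^2$ adapted to the paraboloid in $\R^{n+1}$, and optimize.

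\textbf{Finishing.} Once the averaged $L^2$ norm is finite, $f_{\mathbf x}\in L^2$ for a.e.\ pin tuple. Pulling back through the smooth maps of Step 1 gives abundance of $L^2$ pinned $k$-stars. Positive $\mathcal L^k$ measure of $\Delta^{k\text{-star}}_{x_1,\dots,x_k}(E)$ follows because a nonzero measure with $L^2$ density is supported on a set of positive measure.
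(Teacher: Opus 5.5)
Your Steps 1--2 match the paper's reduction. The vector $(x_i,-\tfrac12)$ is a scalar multiple of the paper's lifted pin $\pi^{-1}(x_i)=-\frac{1}{2|x_i|^2}(x_i,-\tfrac12)$. Plancherel in the dot-product variables, followed by integration over the pins, gives $\int_{\R^{n+1}}|\mathcal{E}\mu_{k+1}|^2\,dw$ with equality; no change of variables or transversality is needed here. The weight is $w=w_1*\cdots*w_k$ with $w_j=\Phi_*(\mu_j\times dt)$, which is your pushforward weight written with $\pi^{-1}(x_i)$ in place of $\tilde x_i$.

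\textbf{The gap is Step 3, where the threshold is actually decided.} You never establish the regularity of $w$, and your guesses about it are wrong. What is needed, and sufficient, is that $R^{\alpha-k+1}w|_{A_R}$ is an $(\alpha+1)$-Frostman measure, i.e.\ $w(B(y_0,r)\cap A_R)\lesssim R^{k-1-\alpha}r^{\alpha+1}$. No tube or slab regularity, cap decomposition, or Stein--Tomas interpolation enters. The bound comes from two observations.
\begin{enumerate}
\item $w_j$ lives on the cone $\bigcup_{x\in E_j}\R(-2x,1)$. On $\operatorname{supp}w_j\cap A_R$ the inverse $\Phi^{-1}(x,t)=\frac1t(-\frac x2,|x|^2)$ is $O(R^{-1})$-Lipschitz in its first $n$ components and $O(1)$-Lipschitz in the last. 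Hence $\Phi^{-1}(B(y,r)\cap A_R)$ lies in an $(r/R)^n\times r$ tube, so $w_j(B(y,r)\cap A_R)\lesssim R^{-\alpha}r^{\alpha+1}$, while $w_j(B(0,R))\lesssim R$.
\item Transversality of $E_1,\dots,E_k$ gives $|y_1|+\cdots+|y_k|\lesssim|y_1+\cdots+y_k|$ for $y_j\in\operatorname{supp}w_j$. So in the convolution some $|y_l|\sim R$ and all $|y_j|\lesssim R$. Applying the tube bound to the $l$-th factor and the mass bound to the other $k-1$ gives $R^{-\alpha}r^{\alpha+1}R^{k-1}$.
\end{enumerate}
By contrast, your guesses fail in three places:
\begin{enumerate}
\item $\gamma=\alpha+k-n$ is not the dimension of this weight.
\item The mass per unit ball is $R^{k-1-\alpha}$, not $R^{k-\alpha}$. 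With $R^{k-\alpha}$, your ``trivial bound'' would give only $\frac{n+k+1}{2}$, not $\frac{n+k}{2}$.
\item Du--Zhang for the paraboloid in $\R^{n+1}$ has exponent $\eta/(n+1)$, not a quotient by $n+2$.
\end{enumerate}

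With the correct weight, the closing computation you left as ``balancing the exponents should yield'' is short. Rather than decomposing into caps, frequency-localize. By a stationary-phase argument, on $B(0,R)$ one may replace $f$ by $g=\mathcal{P}_{\le 4R}f$ up to an error $O(R^{-N}\|f\|_{H^{-s}})$. For $f$ a mollification of $\mu_{k+1}$ one has $\|g\|_2^2\lesssim R^{n-\alpha}$; the paper phrases this as bounding everything by $\int|\xi|^{-\beta}|\widehat f(\xi)|^2\,d\xi$ with $\beta>n-\alpha$. Du--Zhang applied to the $(\alpha+1)$-Frostman measure $R^{\alpha-k+1}w|_{A_R}$ gives $\int_{A_R}|\mathcal{E}g|^2\,dw\lesssim R^{k-1-\alpha+\frac{\alpha+1}{n+1}+\epsilon}\|g\|_2^2$. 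The dyadic sum then converges precisely when $2\alpha>n+k-1+\frac{\alpha+1}{n+1}$, i.e.\ $\alpha>\frac{n^2+nk+k}{2n+1}$. Replacing $R^{\frac{\alpha+1}{n+1}}$ by the trivial factor $R$ recovers $\frac{n+k}{2}$. Without this weight lemma and this computation, your proposal sets up the right framework but does not prove the stated threshold.
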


The main analytic step in the proof of Theorem~\ref{thm:main} is a reduction from
pinned $k$-star distance measures to weighted Fourier extension estimates for the
paraboloid. For fixed pins, we rewrite the $L^2$ norm of the density of the associated
pinned $k$-star distance measure in terms of the paraboloid Fourier extension operator evaluated
along the linear span of the corresponding points on the paraboloid. This is the content
of Lemma~\ref{lemma: equalityreplacingliuidentity}, which plays the role that Liu's
identity (see \cite[Theorem 1.9]{LiuL2}) plays in recent works on the Falconer distance problem. 

In the case $k=1$, where $k$-stars reduces to the Falconer distance problem, Fourier restriction estimate has been a widely used tool in the study of the distance problem for decades. It is well known that (see for instance Wolff \cite{Wolff99} and Erdo\u{g}an \cite{Erdogan2006}), through the framework of Mattila integral, one can reduce the original unpinned Falconer distance problem to weighted restriction estimates. In the pinned case, a similar reduction was successfully achieved by Liu \cite{LiuL2}, where he established an $L^2$ identity that directly links the $L^2$ norm of the density of the pinned distance measure to the $L^2$ norm of the Fourier extension of a function supported on the sphere. This identity opened the door to all the most recent improvement towards the pinned version of the Falconer distance conjecture, where the question is reduced to weighted restriction estimates for the sphere. 

The key novelty of our method is that we establish a new $L^2$ identity for all $k$-stars (including the $k=1$ case) that can serve a similar purpose, connecting the $k$-star problem directly to a weighted Fourier restriction problem hence making it possible to introduce analytic tools into the study. Even in the case $k=1$, such a connection to the weighted extension estimates for the paraboloid does not seem to have been observed before and brings new perspectives into the distance set problem. 

Indeed, the new identity we established in the paper is distinct from the classical approaches in the following ways. First, unlike the classical approaches that reduce the problem in $\mathbb{R}^n$ to a weighted restriction problem for the sphere in $\mathbb{R}^n$, our framework reduces it to a weighted restriction problem for the paraboloid in $\mathbb{R}^{n+1}$. It is known that the paraboloid and the sphere display different behaviors in the weighted restriction theory (see e.g. \cite{DuinJLMS}). Hence this new connection may bring additional insights into the theory of distance problems. Second, while the classical approaches reduce the distance problem to the Fourier extension of the Fourier transform of the Frostman measure, our framework instead turns it into the Fourier extension of the Frostman measure itself. In addition, this framework can also be more easily extended to the general $k$-star case, whereas it's unclear to us whether the approach via Liu's identity could be used to study the $k$-stars. More detailed discussions on these comparisons can be found in Section \ref{ref:ProofofThmA}.

Another interesting feature of our proof is the role played by the averaging operator over a lower dimensional sphere in $\mathbb{R}^n$. It is well known that, in the $k=1$ case, the pinned distance measure is naturally related to the spherical averaging operator where the sphere $S^{n-1}$ is centered at the pin. For $k\geq 2$, a similar averaging operator is brought into the picture where the sphere is of higher codimension and is jointly determined by all the $k$ pins. Such an operator seems to be intrinsically more difficult to understand than the standard spherical average, and we expect that improved understanding of it would shed new light on the $k$-star problem.

As mentioned earlier, the role of Theorem \ref{thm:main} is broader than the $k$-star configuration itself. Since it gives an improved abundance theorem for $L^2$ pinned $k$-stars, it can be used
as the input in the graph-building framework of \cite{BFOPR2026}. Consequently,
Theorem~\ref{thm:main} yields the following corollary for $k$-admissible pinned graphs.

\begin{corollary}\label{cor: pinnedgraphs}
    Let $G$ be a graph with vertex set $\mathcal{V}$ and edge set $\mathcal{E}$. Let $\mathcal{P}\subset \mathcal{V}$ be such that no pair of vertices in $\mathcal{P}$ is connected by an edge in $\mathcal{E}$ and denote $m=|\mathcal{P}|$. Assume that $(G,\mathcal{P})$ is a $k$-admissible pinned graph. Then, for $n\geq2$, $1\leq k<n$ and any compact set $E\subset \mathbb{R}^n$ with Hausdorff dimension larger than  $$\alpha_{+}(n,k)=\frac{n + k - 1}{2} + \frac{1}{4} + \frac{2k + 1}{4(2n + 1)},$$ there are distinct points $x_1,\cdots, x_m\in E$, such that the pinned distance set $\Delta^{G}_{\mathcal{P}, x_1,\cdots,x_m}(E)$ has positive $|\mathcal{E}|$-dimensional Lebesgue measure.
\end{corollary}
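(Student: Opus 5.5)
The plan is to derive the corollary from Theorem~\ref{thm:main} by running the graph-building argument of \cite{BFOPR2026} that yields Theorem~\ref{thm: structuralthmforgraphs}. The key observation is that the only input of that argument which depends on the dimension of $E$ is the abundance of $L^2$ pinned $k'$-stars for every $k'\le k$. In \cite{BFOPR2026} this input is supplied by Theorem~\ref{thm:k starinIPPS} under $\dim(E)>\frac{n+k}{2}$. All other steps use only compactness, Frostman measures, restriction to pairwise separated and transverse subsets, and Fubini-type arguments along the admissible ordering.

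\textbf{Step 1 (star input).} Fix $\alpha$ with $\alpha_{+}(n,k)<\alpha<\dim(E)$ and an $\alpha$-Frostman measure $\mu$ on $E$. Note that $\alpha_{+}(n,k')$ is increasing in $k'$, since its $k'$-derivative is $\frac{n+1}{2n+1}>0$. Hence $\alpha>\alpha_{+}(n,k')$ for every $1\le k'\le k$, and Theorem~\ref{thm:main} gives that $E$ has an abundance of $L^2$ pinned $k'$-stars relative to $\mu$ for each such $k'$. This is exactly the hypothesis form used in \cite{BFOPR2026}. If their structural statement is phrased abstractly (``if $E$ has an abundance of $L^2$ pinned $k'$-stars relative to some Frostman $\mu$ for all $k'\le k$, then the conclusion holds for $k$-admissible graphs''), we are done immediately.

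\textbf{Step 2 (induction over the admissible ordering), if the abstraction must be checked.} List the unpinned vertices $v_{m+1},\dots,v_{|\mathcal V|}$ with back-degree at most $k$. Choose pairwise separated compact pieces $E_1,\dots,E_{|\mathcal V|}\subset E\setminus\{0\}$ of positive $\mu$-measure such that, for each $i>m$, the pieces indexed by the back-neighbours of $v_i$ are transverse. This is possible by translating so that $0\notin E$ and using that $\mu$ is not concentrated on any affine subspace of dimension $<k$, since $\alpha>k$. Then define the pushforward of $\mu_{E_{m+1}}\times\cdots\times\mu_{E_{|\mathcal V|}}$ under the edge-length map. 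Disintegrating vertex by vertex, the new edges created at $v_i$ form a $d_{<i}(v_i)$-star pinned at previously placed points. Star abundance gives $L^2$ densities for almost every earlier configuration, so the pushforward is absolutely continuous on $\R^{|\mathcal E|}$ for $\mu^m$-a.e.\ choice of pins. Its support lies in $\Delta^{G}_{\mathcal P,x_1,\dots,x_m}(E)$, which therefore has positive measure. Distinctness of points is automatic from separation.

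\textbf{Main obstacle.} The delicate point is that the pins of a later star are previously chosen \emph{unpinned} points, which are distributed according to a conditional measure rather than $\mu$ itself. Star abundance must therefore hold almost everywhere with respect to products of the restricted measures $\mu_{E_j}$, which is precisely why the definition quantifies over all separated transverse sub-collections. This is the part handled in \cite{BFOPR2026}, and I would cite it rather than reprove it, checking only that their proof invokes Theorem~\ref{thm:k starinIPPS} solely through the abundance conclusion now provided by Theorem~\ref{thm:main}.
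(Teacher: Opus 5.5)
Your proposal is correct and follows the paper's route: Theorem~\ref{thm:main} supplies the abundance of $L^2$ pinned $k'$-stars for every $k'\le k$ (by the monotonicity of $\alpha_{+}(n,\cdot)$ you note), and this is fed into the graph-building machinery of \cite{BFOPR2026}. That machinery is combined with the transversality refinement of Lemma~\ref{lemma: transversalityreduction}, which the paper applies to every $k$-subset of the separated pieces rather than only to back-neighbour sets as you do; either choice suffices, and your Step~2 vertex-by-vertex absolute-continuity sketch is sound, though the paper simply cites \cite{BFOPR2026} for it.
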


Thus, the weighted restriction estimates used to prove Theorem~\ref{thm:main} translate, through the graph-building framework of \cite{BFOPR2026}, into improved Falconer-type thresholds for every \(k\)-admissible pinned graph. Several special classes of graphs are of particular interest. In Subsection~\ref{subsection:cyclesandsimplices}, we state several explicit corollaries about these graphs of our theorem. Specifically, we discuss cycles, also known as necklaces, and simplices, both of which have played a central role in previous work on Falconer-type configuration problems.

Theorem \ref{thm:main} also improves several of the area-vector thresholds obtained
in \cite{BFPOR2026II}. The Jacobian method developed there transfers
positive-measure results for pinned distance graphs to the corresponding
vectors of triangle areas whenever the associated length-to-area map has
full rank. Consequently, replacing the previous $k$-star threshold
$\frac{n+k}{2}$ by $\alpha_+(n,k)$ immediately improves the dimensional
hypotheses for the area configurations treated there that are built from
$k$-admissible graphs. These include, for example, pinned fans, fish
graphs, wheels, and circumscribed flowers, in the ranges where $k<n$.

\subsection{Nonempty interior results and dimension estimates for \texorpdfstring{$k$}{k}-stars}

In addition to positive Lebesgue measure type results, our approach to Theorem \ref{thm:main} has the advantage that it can be readily adapted to study other related problems for the $k$-star sets and beyond. Surprisingly, even in the $k=1$ case (the Falconer distance set problem), it yields several new improvements of previously best known results. 

For sets $E\subset \mathbb{R}^n$, rather than seeking dimensional thresholds that guarantee $\Delta(E)$ or $\Delta_x(E)$ have positive measure, one may ask for sufficient conditions ensuring the stronger conclusion that $\Delta(E)$ or $\Delta_x(E)$ has nonempty interior. A classical result of Mattila and Sj\"olin \cite{MS99} states that if $E\subset \mathbb{R}^n$ satisfies $\dim(E)>\frac{n+1}{2}$, then $\operatorname{int}(\Delta(E))\neq\emptyset$. For the pinned analogue, Peres and Schlag \cite{PS00} showed that when $n\geq 3$ and $\dim(E)>\frac{n+2}{2}$, there exists $x\in E$ such that $\operatorname{int}(\Delta_x(E))\neq\emptyset$. Later, Iosevich and Liu \cite{IL19} derived improved exceptional set estimate for the pinned nonempty interior problem in certain regimes and introduced a new method based on local smoothing estimates. More recently, these thresholds were improved and extended by the first two authors of this paper, Foster and Palsson in \cite{BFOP2026}. In particular, they proved that in the plane the same conclusion holds under the condition $\dim(E)>7/4$, and that in dimension $n=3$ the threshold can be lowered to $12/5=2.4$. They also showed that, if the local smoothing conjecture for the wave equation were established for $n\geq 3$, then the threshold could be further reduced to $\frac{n+2}{2}-\frac{1}{2n}$, representing an improvement of $\frac{1}{2n}$ over the Peres--Schlag threshold.

\medskip
The nonempty interior problem extends naturally to $k$-star distance sets. When $k=1$, this reduces precisely to the pinned nonempty interior problem for distance sets discussed above.

\begin{question}
Let $k\geq 1$ and $E\subset \mathbb{R}^n,\,n\geq 2$. Under what hypotheses on the Hausdorff dimension of $E$ can one guarantee the existence of points $x_1,\dots,x_k\in E$ such that
\[
\Delta_{x_1,\dots,x_k}^{k\text{-star}}(E)
:=
\{(|x_1-x|,|x_2-x|,\dots,|x_k-x|):x\in E\}
\]
has nonempty interior in $\mathbb{R}^k$?
\end{question}

To the best of our knowledge, apart from the case $k=1$, this question has not been explicitly studied before. By adapting ideas from the proof of Theorem~\ref{thm:main}, we obtain the first nonempty-interior thresholds for pinned $k$-star distance sets with $k\geq 2$.

\begin{atheorem}\label{thm: nonemptyinteriorforkstars}
    Let $n\geq 3$ and \(1\leq k<\frac{n}{2}\). Let \(E\subset \mathbb R^n\) be a compact set satisfying
\[
\dim(E)>\alpha^{\circ}(n,k)
:=
\frac{n+2k-1}{2}+\frac{1}{4}+\frac{4k+1}{4(2n+1)}.
\]
Let \(\mu\) be an \(\alpha\)-Frostman measure on \(E\), where
\(\alpha>\alpha^{\circ}(n,k)\). Then there is an abundance of pins
\((x_1,\dots,x_k)\in E^k\) with respect to \(\mu\) such that
$\Delta_{x_1,\dots,x_k}^{k\text{-star}}(E)$
has nonempty interior in \(\R^k\).
\end{atheorem}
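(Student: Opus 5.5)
The plan is to show that, for an abundance of pins, the pinned $k$-star measure $\nu_{x_1,\dots,x_k}:=(d_{x_1,\dots,x_k})_*(\mu_{E_{k+1}})$ has a \emph{continuous} density on $\R^k$. Since this density is not identically zero and $\nu_{x_1,\dots,x_k}$ is supported in $\Delta^{k\text{-star}}_{x_1,\dots,x_k}(E)$, the set on which the density is positive is a nonempty open subset of the $k$-star distance set. Continuity will come from integrability of the Fourier transform: I will aim to show that $\widehat{\nu_{x_1,\dots,x_k}}\in L^1(\R^k)$ for $\mu_{E_1}\times\cdots\times\mu_{E_k}$-a.e.\ pins. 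This is the standard route to nonempty interior (Mattila--Sj\"olin, Peres--Schlag, \cite{BFOP2026}).

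\textbf{Step 1.} Replace the $L^2$ norm in Lemma~\ref{lemma: equalityreplacingliuidentity} by a weighted $L^2$ norm, namely $\int|\widehat{\nu}(t)|^2(1+|t|)^{2s}\,dt$ with some $s>k/2$. By Cauchy--Schwarz this controls $\|\widehat\nu\|_{L^1}$, because $(1+|t|)^{-2s}$ is integrable on $\R^k$. Working with squared distances, the map $y\mapsto(|x_i-y|^2)_i$ lifts, after the usual lift $y\mapsto(y,|y|^2)$ to the paraboloid in $\R^{n+1}$, to dot products with the pin vectors $(-2x_i,1)$. The frequency $t\in\R^k$ then corresponds to the point $\sum_i t_i(-2x_i,1)$ in the $k$-dimensional span $V_{x}$ of those lifted pins. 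Transversality keeps $|t|$ comparable to $|\sum t_i(-2x_i,1)|$, and the separation of $E_{k+1}$ from the pins lets me pass between squared distances and distances smoothly. The quantity to control is therefore
\[
\int_{V_x}|E\mu_{E_{k+1}}(\xi)|^2(1+|\xi|)^{2s}\,d\mathcal H^k(\xi),
\]
where $E$ denotes the paraboloid extension operator applied to the lifted measure.

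\textbf{Step 2.} Average over the pins. Integrating $d\mu_{E_1}\cdots d\mu_{E_k}$ turns the expression into a weighted extension estimate $\int|E f|^2\,w$ with an explicit weight $w$ built from the Frostman measures. This is the same weight as in the proof of Theorem~\ref{thm:main}, but with the extra factor $|\xi|^{2s}\approx|\xi|^{k+}$.

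\textbf{Step 3.} After a dyadic decomposition in $|\xi|\sim R$, the proof of Theorem~\ref{thm:main} gives a bound on each piece of the form $R^{-\gamma(\alpha)}$ with $\gamma(\alpha)>0$ exactly when $\alpha>\alpha_+(n,k)$. Now each piece must beat $R^{k}$ rather than $R^0$. I expect the exponent $\gamma$ to depend linearly on $\alpha$, with slope exactly $1$ in the Frostman-type bounds I plan to use (e.g.\ $\mu(B(x,r))\lesssim r^\alpha$ entering once). Under that assumption, requiring the extra decay $k$ shifts the threshold by $k/2$ per unit of $|\xi|$-weight. This is consistent with the stated thresholds, where $\alpha^\circ-\alpha_+=\frac{k}{2}+\frac{2k}{4(2n+1)}$. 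I will redo the optimization of parameters from that proof with the exponent $k$ inserted, and expect to land exactly at $\alpha^\circ(n,k)$. Summing the dyadic pieces then gives finiteness for a.e.\ pins.

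The main obstacle is Step 3. The weighted extension estimate must survive the extra weight, and the hypothesis $k<n/2$ (equivalently $\alpha^\circ<n$) is presumably needed so that the relevant parameters stay within the admissible range, for instance so that the dimension of the weight exceeds what the refined Strichartz/decoupling-type input requires. I would first check whether the proof of Theorem~\ref{thm:main} gives the bound for each $R$ with explicit exponent. If it does, the result follows by bookkeeping. If it does not, I would re-derive the weighted estimate directly with the weight $w\cdot|\xi|^{k+\epsilon}$.
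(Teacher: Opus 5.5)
Your plan matches the paper's proof. The paper shows $\int\int_{\R^k}|\mathcal{E}(\mu_{k+1})(r_1\pi^{-1}(x_1)+\dots+r_k\pi^{-1}(x_k))|^2(1+|\bm r|)^{k+\epsilon}\,d\bm r\,d\bm\mu(\bm x)<\infty$ and gets continuity by Sobolev embedding; your squared-distance lift with vectors $(-2x_j,1)$ is the same as using $\pi^{-1}(x_j)$, up to the harmless scaling by $4|x_j|^2$. It then uses Lemma~\ref{lem:weight_support} to replace $|\bm r|$ by $|y|$, and reruns the dyadic Frostman decomposition and Du--Zhang estimate with an extra factor $R_i^{k+\epsilon}$; this replaces $k-1$ by $2k-1$ in the exponent count, so $\alpha^{\circ}(n,k)=\alpha_+(n,2k)$.

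Your slope heuristic is off: the exponent $n+2k-1+\frac{\alpha+1}{n+1}-2\alpha$ has $\alpha$-slope $-\frac{2n+1}{n+1}$, so the shift is $\frac{k(n+1)}{2n+1}$ rather than $\frac k2$. The explicit bookkeeping nevertheless lands exactly on $\alpha^{\circ}$. Also, $k<\frac n2$ is needed only to make $\alpha^{\circ}<n$, not for any admissibility range in the restriction input.
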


The condition \(n>2k\) in the statement above is imposed to ensure that the dimensional hypothesis is nontrivial, that is, $\alpha^{\circ}(n,k)<n$. So far, there isn't a mechanism in the literature that allows us to transfer thresholds for nonempty interior of $k$-stars to nonempty interior of $k$-admissible graphs. In some cases, one can succeed in using the nonempty interior result for $k$-stars to obtain nonempty interior for more complicated distance graphs. We will give an example in the next subsection by providing an application of the nonempty interior result for $2$-stars to the nonempty interior of even necklaces.

For the case $k=1$, which is of independent interest, we can further improve the threshold above by adapting ideas from \cite{DZ2019}. In that case, we obtain the following.

\begin{atheorem}\label{thm: nonemptyinteriorfor1star}
Let $n\geq 3$, and let $E\subset \mathbb{R}^n$ be a compact set satisfying
\[
\dim(E)>\frac{n}{2}+\frac{3}{4}+\frac{3}{4(2n-1)}= \frac{n(n + 1)}{ 2n - 1}.
\]
Then there exists $x\in E$ such that $\Delta_x(E)$ contains a nontrivial interval.
\end{atheorem}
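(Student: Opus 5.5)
The plan is to reduce the nonempty-interior statement to the absolute convergence of a one-dimensional Fourier integral, and to control that integral through the paraboloid lift that underlies Lemma~\ref{lemma: equalityreplacingliuidentity}. Fix an $\alpha$-Frostman measure $\mu$ on $E$ with $\alpha>\frac{n(n+1)}{2n-1}$. Split $E$ into two separated compact pieces $E_1,E_2$ of positive measure, and study for $x\in E_1$ the pinned measure $\nu_x:=(\rho_x)_*\mu_{E_2}$ with $\rho_x(y)=|x-y|^2$. If $\nu_x$ has a continuous density that is not identically zero, its support contains an interval. Since $t\mapsto\sqrt t$ is a diffeomorphism away from $0$, $\Delta_x(E)$ then contains an interval. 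By Fourier inversion it therefore suffices to show
\[
\int_{E_1}\int_{\R}|\widehat{\nu_x}(t)|\,dt\,d\mu(x)<\infty ,
\]
which gives $\widehat{\nu_x}\in L^1$ for $\mu$-a.e.\ $x$, and in particular for some $x\in E$.

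The key algebraic step is the lift
\[
|x-y|^2=|x|^2+(-2x,1)\cdot(y,|y|^2).
\]
Let $\sigma$ be the pushforward of $\mu_{E_2}$ to the paraboloid $\{(y,|y|^2)\}\subset\R^{n+1}$ and write $E\mu:=\widehat{\sigma}$ for the extension of $\mu$. Then
\[
|\widehat{\nu_x}(t)|=\bigl|E\mu\bigl(t(-2x,1)\bigr)\bigr|.
\]
Decompose dyadically in $|t|\sim R$; the range $|t|\lesssim1$ is trivial. The map $(t,x)\mapsto\xi=t(-2x,1)$ has Jacobian $\sim|t|^n$ and is injective for $t\neq0$. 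Hence the dyadic piece equals $\int|E\mu(\xi)|\,d w_R(\xi)$, where $w_R$ is the pushforward of $dt\times d\mu$ restricted to $|t|\sim R$. This weight lives in the region $\{|\xi|\sim R\}$ of a cone-like set. It has total mass $\sim R$, and every unit ball meets it in mass $\lesssim R^{-\alpha}$, because a unit ball in $\xi$ corresponds to an $R^{-1}$-ball in $x$ and a unit interval in $t$. After rescaling, it is a weight of fractal dimension roughly $\alpha+1$ in $B_R\subset\R^{n+1}$.

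Next, by uncertainty, replace $\mu$ by its mollification $\mu_R$ at scale $R^{-1}$ on the paraboloid; this costs only Schwartz tails. Apply Cauchy--Schwarz:
\[
\int|E\mu_R|\,dw_R\le w_R(\R^{n+1})^{1/2}\Bigl(\int|E\mu_R|^2\,dw_R\Bigr)^{1/2}.
\]
Then invoke a fractal $L^2$ weighted restriction estimate for the paraboloid in $\R^{n+1}$ of Du--Zhang type \cite{DZ2019}, of the form
\[
\|Ef\|_{L^2(B_R;\,H)}\lesssim R^{\gamma(\beta,n)+\epsilon}\,\|f\|_{L^2},
\]
for weights $H$ of dimension $\beta=\alpha+1$. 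We combine it with
\[
\|\mu_R\|_{L^2}^2\lesssim R^{\,n-\alpha},
\]
and also exploit the extra decay of $\sigma$ coming from the curvature of the paraboloid, namely that $\mu_R$ is spread over the $n$-dimensional surface at scale $R^{-1}$. The resulting bound is $R^{\frac12+\gamma+\frac{n-\alpha}{2}-(\text{normalization})}$. We then check that the exponent is negative exactly when $\alpha>\frac{n(n+1)}{2n-1}$, and sum over dyadic $R$.

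The main obstacle is this last step. One must identify the correct weighted estimate, and the correct dimension parameter for $w_R$, whose mass concentrates along the radial lines $t(-2x,1)$ rather than being genuinely $(\alpha+1)$-dimensional in all directions. The exponent must come out to give the stated threshold with denominator $2n-1$. I would first try to exploit the radial (line) structure of $w_R$: after localizing to unit balls, it behaves like a weight of dimension $\alpha$ in the $n$ transverse directions times a one-dimensional fiber. I would then run the Du--Zhang broad/narrow induction on scales in $\R^{n+1}$ with this refined weight, rather than applying the black-box estimate. The bookkeeping of the $\mu_R$ norm against the weight's mass is where I expect the precise value $\frac{n(n+1)}{2n-1}$ to emerge.
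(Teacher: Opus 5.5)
There is a genuine gap, and it sits exactly at the step you flag as the obstacle. Everything before the exponent count is the paper's proof of Theorem~\ref{thm: nonemptyinteriorforkstars} specialized to $k=1$. That includes the lift $|x-y|^2=|x|^2+(-2x,1)\cdot(y,|y|^2)$ and the dyadic weights $w_R$, with $R^{\alpha}w_R$ an $(\alpha+1)$-Frostman measure on $B_R\subset\R^{n+1}$. It also includes the dyadic Cauchy--Schwarz step, which costs the same $R^{1/2}$ as the $H^{1/2+\epsilon}$ Sobolev criterion, the bound $\|\mu_R\|_{L^2}^2\lesssim R^{n-\alpha}$, and the black-box Du--Zhang paraboloid estimate with $\gamma=\frac{\alpha+1}{2(n+1)}$.

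If you actually carry out your bookkeeping, the normalization is $R^{-\alpha/2}$ and the dyadic exponent is $\frac12+\frac{\alpha+1}{2(n+1)}+\frac n2-\alpha$. This is negative iff $\alpha>\frac{n^2+2n+2}{2n+1}=\alpha^{\circ}(n,1)$. But $\alpha^{\circ}(n,1)-\frac{n(n+1)}{2n-1}=\frac{n-2}{4n^2-1}>0$ for every $n\geq 3$; for instance $17/7$ versus $12/5$ when $n=3$. So your claim that the exponent is negative exactly when $\alpha>\frac{n(n+1)}{2n-1}$ is false with the stated ingredients. Appealing to ``extra curvature decay'' does not rescue it. The only information about $\mu$ you feed in is $\|\mu_R\|_{L^2}$, and a better bound for $\int_{B_R}|\mathcal{E}\mu|^2\,d\nu$ that exploits $\mu$ being Frostman is posed in the paper as an open question.

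The missing idea is to leave the paraboloid and work with the sphere in $\R^n$. The paper Littlewood--Paley decomposes $f=\sum_j f_j$. Since $t\mapsto\mathcal{A}_tf_j(x_1)$ has $t$-frequencies $\lesssim 2^{j}$, the half derivative costs only $2^{j(1+2\epsilon)}$. It then uses Liu's identity to rewrite $\int_{E_1}\int_1^2|\mathcal{A}_tf_j|^2\,dt\,d\mu_1$ as $\int_{r\approx 2^j}\int_{E_1}|f_j*\widehat{\sigma}_r|^2\,d\mu_1\,r^{n-1}\,dr$. At that point Du--Zhang's weighted extension estimate for $S^{n-1}\subset\R^n$, against the $\alpha$-dimensional weight $\mu_1$, gives the factor $r^{-\alpha+\alpha/n}$. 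Since $\alpha<n$, one has $\frac{\alpha}{n}<\frac{\alpha+1}{n+1}$, and the closing condition $1-\alpha+\frac{\alpha}{n}<\alpha-n$ is exactly $\alpha>\frac{n(n+1)}{2n-1}$.

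Your remark that $w_R$ is foliated by lines through the origin is the right intuition. The paper notes, after the proof of Theorem~\ref{thm:main}, that for such line-structured weights the paraboloid integral is comparable, via Liu's identity, to a sphere-extension integral, and this is how the $\alpha/n$ exponent becomes available. Your proposal does not identify this reduction, however. Its suggested substitute, rerunning the broad/narrow induction in $\R^{n+1}$ with a refined weight, is left unexecuted, and nothing indicates it would produce $\alpha/n$ in place of $\frac{\alpha+1}{n+1}$.
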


\begin{remark}
   In the case $k=1$, let us compare the thresholds in Theorem \ref{thm: nonemptyinteriorforkstars} and \ref{thm: nonemptyinteriorfor1star} with the previously best known ones in \cite{BFOP2026}. For all $n\geq 5$, the threshold in Theorem \ref{thm: nonemptyinteriorforkstars} is smaller than
$\frac n2+1-\frac{1}{2n}$,
which is the threshold that would follow from the argument in \cite{BFOP2026} and the sharp local smoothing conjecture for the wave equation. In all dimensions, the threshold in Theorem \ref{thm: nonemptyinteriorfor1star} is always smaller than that in Theorem \ref{thm: nonemptyinteriorforkstars}. For every \(n\geq 4\), the threshold in Theorem~\ref{thm: nonemptyinteriorfor1star} is smaller than
what would follow from the argument in \cite{BFOP2026} and the sharp local smoothing conjecture. In $n=3$, Theorem \ref{thm: nonemptyinteriorfor1star} recovers the same threshold $12/5=2.4$ obtained in \cite{BFOP2026}. For $n=2$, we obtain no information, so the threshold $7/4$ in \cite{BFOP2026} remains the best. 
\end{remark} 

When the dimension of the set $E$ is smaller than $\alpha_+(n,k)$, Theorem \ref{thm:main} does not apply. However, our method can still be adapted to derive lower bounds for the Hausdorff dimension of the $k$-star sets of $E$ in this regime. The exact statement of our result is given in Theorem \ref{thm:kstar_dimension_lower} in Section \ref{sec:nonemptyanddimresults}. In the $k=1$ case, such dimensional estimates for the distance set has attracted a great amount of attention in recent years and exciting new progress has been obtained via many different approaches ranging from Fourier analysis \cite{Liu2020}, projection theory \cite{Shmerkinintheplane}, to computability theory \cite{fiedlerstull23}. We refer the interested reader to these works and the references therein for a detailed recount of the recent progress and state-of-the-arts results in different regimes. In the case $k\geq 2$, Theorem \ref{thm:kstar_dimension_lower} seems to be the first of its kind. Just like the Lebesgue measure results, such dimension estimates for $k$-stars also play a fundamental role in the theory of general distance graphs and can be used to derive similar results for $k$-admissible graphs. For instance, as an application of the $k=2$ case of Theorem \ref{thm:kstar_dimension_lower}, we obtain in Corollary \ref{cor: dim triangle}, for the first time, a dimension estimate for pinned triangle sets. The proof is inspired from the work of the second listed author and Taylor \cite{OT2020}, where a mechanism translating dimension estimates for pinned distances to chains and trees was established. However, the method in \cite{OT2020} was not strong enough to handle cycles.

\subsection{Applications of Theorem \ref{thm:main} and Theorem \ref{thm: nonemptyinteriorforkstars} to cycles and $k$-simplices}\label{subsection:cyclesandsimplices}


Consider the particular case in which the graph of interest is a cycle
$C_l$ with $l$ vertices. For even cycles, Greenleaf, Iosevich, and
Pramanik~\cite{GIP17} studied the constant-gap problem. More precisely,
they proved that, if $d\geq 4$ and
\[
    \dim(E)>\frac{d+3}{2},
\]
then the set of common edge lengths
\(\bigl\{
        t>0 : (t,\ldots,t)\in \Delta^{C_l}(E)
    \bigr\}
\)
contains a nontrivial interval. More recently, Iosevich, Magyar,
McDonald, and McDonald~\cite{IMMM25} obtained an analogous result for
$4$-cycles in $\mathbb R^3$ under the assumption
\[
    \dim(E)>\frac{53-\sqrt{337}}{12}.
\]
These results concern the constant-gap slice of the full cycle distance
set. By contrast, our positive-measure result concerns the full set of
edge-length vectors in $\mathbb R^l$.

Since cycles with no adjacent pinned vertices are $2$-admissible,
Corollary~\ref{cor: pinnedgraphs} implies the following result, which
improves the threshold $\frac{n+2}{2}$ originally established in
\cite{BFOPR2026}.
\begin{corollary}[Improvement for $l$-cycles]\label{cor: necklaces}

 Let $l\geq 3$. Let $C_l$ be a $l$-cycle graph with $m\geq0$ pins listed in $\mathcal{P}$, such that no pair of vertices in $\mathcal{P}$ shares an edge. Then, if $n\geq 3$ and $E\subset \R^n$ is a compact set satisfying that $$\dim(E)>\frac{n+1}{2}+\frac{1}{4}+\frac{5}{4(2n+1)},$$ 
 then the corresponding pinned $l$-cycle distance set has positive $l$-dimensional
Lebesgue measure, in the sense that
 
$$\mathcal{L}^l\left(\Delta_{\mathcal{P},x_1,x_2,\dots ,x_m}^{C_l}(E))\right)>0$$
for some  $x_1,x_2,\dots ,x_m\in E$. 
\end{corollary}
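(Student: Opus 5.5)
This is a direct specialization of Corollary~\ref{cor: pinnedgraphs} with $k=2$, so the plan is to verify its two hypotheses: that $(C_l,\mathcal{P})$ is $2$-admissible, and that the dimensional threshold matches $\alpha_+(n,2)$ with $2<n$.

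\emph{The threshold.} Setting $k=2$ in
\[
\alpha_+(n,k)=\frac{n+k-1}{2}+\frac14+\frac{2k+1}{4(2n+1)}
\]
gives exactly $\frac{n+1}{2}+\frac14+\frac{5}{4(2n+1)}$. The condition $1\leq k<n$ becomes $n\geq 3$, which is the dimensional assumption in the statement.

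\emph{Admissibility.} Label the cycle vertices $w_1,\dots,w_l$ cyclically, so each vertex has exactly two neighbours. Place the pins first, in any order. This is permitted because no two pins are adjacent, so no edge is fully pinned; in any case the back-degree condition only constrains unpinned vertices. Then list the unpinned vertices in any order, for instance cyclically. Each unpinned vertex has total degree $2$ in $C_l$, so at most $2$ of its neighbours can precede it. Hence $d_{<i}(v_i)\le 2$ for all $i>m$, and $(C_l,\mathcal{P})$ is $2$-admissible. This holds for every $m\ge 0$, including the unpinned case $m=0$ and the case of a cycle with all alternate vertices pinned.

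\emph{Conclusion.} Corollary~\ref{cor: pinnedgraphs} now yields distinct $x_1,\dots,x_m\in E$ with $\mathcal{L}^{|\mathcal{E}|}\bigl(\Delta^{C_l}_{\mathcal{P},x_1,\dots,x_m}(E)\bigr)>0$. Since $|\mathcal{E}|=l$ for $C_l$, this is the claimed statement.

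There is no real obstacle here. The only points to check are that the degree bound of $C_l$ gives $2$-admissibility regardless of pin placement, and that the threshold arithmetic at $k=2$ comes out correctly. All the analytic content is contained in Theorem~\ref{thm:main} and in the graph-building transfer of \cite{BFOPR2026} encoded in Corollary~\ref{cor: pinnedgraphs}.
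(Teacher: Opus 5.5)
Your proposal is correct and follows the paper's own reasoning. The paper deduces this corollary directly from Corollary~\ref{cor: pinnedgraphs} with $k=2$, using that cycles with no adjacent pins are $2$-admissible; your checks of the degree-$2$ admissibility bound and of the threshold arithmetic $\alpha_+(n,2)=\frac{n+1}{2}+\frac14+\frac{5}{4(2n+1)}$ with $n\geq 3$ fill in exactly those details.
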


Another important graph is $K_{k+1}$, the complete graph in $(k+1)$-vertices, also known as a $k$-dimensional simplex. In that case $\mathcal{V}=\{v_1,\dots,v_{k+1}\}$ and $\mathcal{E}=\{(i,j)\colon 1\leq i<j\leq k+1\}$, with associated unpinned distance set 
$$\Delta^{k-simplex}(E):=\{(|x_i-x_j|)_{1\leq i<j\leq k+1}\colon x_i\in E\text{ for all }1\leq i\leq k+1\},$$
and pinned counterpart 
$$\Delta_{x_1}^{k-simplex}(E):=\{(|x_i-x_j|)_{1\leq i<j\leq k+1}\colon x_i\in E\text{ for all }2\leq i\leq k+1\}.$$

 In the case $k=2$, $K_3$ is a triangle. The pinned distance set (with pin in $x_1$) in that case looks like
$$\Delta^{triangle}_{x_1}(E)=\{(|x_1-x_2|,|x_2-x_3|,|x_3-x_1|)\colon x_2,x_3\in E\textnormal{ distinct}\},$$
and its unpinned counterpart is 
$$\Delta^{triangle}(E)=\{(|x_1-x_2|,|x_2-x_3|,|x_3-x_1|)\colon x_1,x_2,x_3\in E\textnormal{ distinct}\}.$$

The study of triangle configurations realized in thin sets of $\R^n$ has a rich literature, and it goes back to Greenleaf and Iosevich \cite{GI12} who proved that
$\mathcal{L}^3(\Delta^{triangle}(E))>0$
whenever $E\subset \R^2$ with $\dim(E)>7/4$. That was improved to the threshold $8/5$ by Greenleaf, Iosevich, Liu and Palsson in \cite{GILP15}. The group action method in that paper takes spherical-average decay inputs into dimensional thresholds for unpinned results for positive measure of $k$-simplices, and it can be stated as follows.

\begin{proposition}[Group action criterion given spherical-average decay input \cite{GILP15}]\label{prop:GILP_branch}
Let $n\geq 2$. Assume that for every $\alpha$-Frostman measure $\mu$ on $\R^n$ one has
\begin{equation}\label{eq:beta_def}
  \int_{S^{n-1}} |\widehat\mu(R\omega)|^2\,d\sigma(\omega)
  \lesssim_{\mu,\epsilon} R^{-\beta_n(\alpha)+\epsilon},
  \qquad R\geq 1.
\end{equation}
for some $\beta_n(\alpha)>0$. Let $1\leq k\leq n $. Then, if $E\subset \R^n$ is compact and
\begin{equation}\label{eq:GILP_condition}
  \alpha+\frac{1}{k}\beta_n(\alpha)>n
\end{equation}
for some $\alpha<\dim(E)$, then
\[
  \mathcal{L}^{\binom{k+1}{2}}\bigl(\Delta^{k-simplex}(E)\bigr)>0.
\]
\end{proposition}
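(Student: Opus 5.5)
The plan follows the group action argument of Greenleaf--Iosevich--Liu--Palsson: realize simplices through the action of the rigid motion group, and reduce positive measure of the simplex set to an $L^2$ bound on a natural measure on $O(n)$ (or on the Euclidean group).

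\textbf{Setup.} Fix an $\alpha$-Frostman measure $\mu$ on $E$ with $\alpha<\dim(E)$ satisfying \eqref{eq:GILP_condition}. For $1\le k\le n$, two $(k+1)$-point configurations in $\R^n$ have the same edge-length vector if and only if they are congruent. Hence, away from the degenerate configurations, the simplex map $(x_1,\dots,x_{k+1})\mapsto (|x_i-x_j|)_{i<j}$ factors through the quotient of $E^{k+1}$ by the Euclidean group $O(n)\ltimes \R^n$.

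\textbf{Step 1: the measure $\nu_k$.} Define a measure on $O(n)$ by
\[
\int_{O(n)} f(g)\,d\nu_k(g)=\int\!\!\int f(g)\,d\nu(g)\,\dots,
\]
where $\nu$ is characterized by
\[
\int\!\!\int \phi(x-gy)\,d\mu(x)\,d\mu(y)=\int \phi\, d\nu_g \quad\text{for each } g .
\]
Concretely, set $\lambda_g(z)=\mu*(g_*\tilde\mu)(z)$, the density of $x-gy$. The standard group action criterion in \cite{GILP15} says that if
\[
\int_{O(n)}\int_{\R^n}\lambda_g(z)^{k+1}\,dz\,dg<\infty,
\]
then $\Delta^{k\text{-simplex}}(E)$ has positive measure. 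The mechanism is Cauchy--Schwarz: the $(k+1)$-fold product $\mu^{k+1}$ pushed forward by the simplex map has an $L^2$ density whenever the quantity
\[
\int\!\!\int \mu^{k+1}\bigl(\{(x_i):|x_i-x_j|\approx |y_i-y_j|\}\bigr)\,d\mu^{k+1}(y)
\]
stays bounded as the scale $\epsilon\to 0$. After disintegrating over rigid motions $x_i=gy_i+z$, this quantity is comparable to $\int\!\int \prod_{i=1}^{k+1}\lambda^\epsilon_g(z)\,dz\,dg$, up to the Jacobian of the map from the Euclidean group to configurations. That Jacobian is bounded on nondegenerate configurations, and degenerate ones carry no mass once $\alpha>k-1$.

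\textbf{Step 2: Fourier bound.} I would bound $\int\!\int \lambda_g^{k+1}$ by interpolation. One endpoint is $\|\lambda_g\|_\infty$, which I would handle by controlling $\sup_g \|\lambda_g^\epsilon\|_{L^\infty}$ via $L^p$ improvements. More simply, following GILP, write $\lambda^{k+1}=\lambda^{k-1}\cdot\lambda^2$ and use the decay \eqref{eq:beta_def} for the $L^2$ part. By Plancherel,
\[
\int_{O(n)}\|\lambda_g\|_2^2\,dg=\int |\widehat\mu(\xi)|^2\int_{O(n)}|\widehat\mu(g\xi)|^2\,dg\,d\xi ,
\]
and the inner average over $g$ equals the spherical average $\int_{S^{n-1}}|\widehat\mu(|\xi|\omega)|^2\,d\sigma(\omega)\lesssim |\xi|^{-\beta_n(\alpha)+\epsilon}$. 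Combining this with the energy bound $\int|\widehat\mu|^2|\xi|^{\gamma-n}<\infty$ for $\gamma<\alpha$ gives finiteness of the $L^2$ part when $\alpha+\beta_n(\alpha)>n$. For general $k$, the higher powers cost a factor: each additional copy of $\lambda_g$ is bounded via the mass at scale $\epsilon$, which contributes $\epsilon^{\alpha-n}$. Balancing this against the $\epsilon^{-(n-\alpha-\beta)}$-type gain, split evenly among the $k$ effective factors, produces the condition $\alpha+\beta_n(\alpha)/k>n$.

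\textbf{Main obstacle.} The hardest step is the rigorous accounting that turns the $(k+1)$-fold product into exactly the exponent $\beta/k$. My approach there is a dyadic decomposition in frequency with Hölder's inequality in $g$, at scale $\epsilon$.
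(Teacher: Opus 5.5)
The paper does not prove this proposition; it quotes it from \cite{GILP15}, so the comparison is with the Greenleaf--Iosevich--Liu--Palsson argument. Your Step 1 is the right reduction: you need a bound on $\int_{O(n)}\int\lambda_g^{k+1}\,dz\,dg$ that is uniform in the mollification scale. Your Plancherel identity for $\int_{O(n)}\|\lambda_g\|_2^2\,dg$ in terms of spherical averages is also the right ingredient.

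The gap is in the exponent bookkeeping of Step 2. Since $\beta_n(\alpha)>0$, the hypothesis $\alpha+\beta_n(\alpha)/k>n$ forces $\alpha+\beta_n(\alpha)>n$. In that regime the global quantity $\int_{O(n)}\int(\lambda_g^\epsilon)^2\,dz\,dg$ does not decay as $\epsilon\to0$. It increases to $\int|\widehat\mu(\xi)|^2\int_{S^{n-1}}|\widehat\mu(|\xi|\omega)|^2\,d\sigma(\omega)\,d\xi$, a fixed positive constant; the region $|\xi|\lesssim1$ alone contributes a fixed positive amount. Your own remark about ``finiteness of the $L^2$ part'' says exactly this, so there is no $\epsilon^{\alpha+\beta-n}$ gain to balance against. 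Consequently, pulling out $\sup_{g,z}\lambda_g^\epsilon\lesssim\epsilon^{\alpha-n}$ globally gives only
\[
\int_{O(n)}\int(\lambda_g^\epsilon)^{k+1}\,dz\,dg\lesssim\epsilon^{-(k-1)(n-\alpha)},
\]
which diverges for every $k\geq2$. The global $L^\infty$--$L^2$ split is too lossy: it pairs the worst-case sup, a high-frequency effect, with $L^2$ mass that sits at low frequency.

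The missing idea, which is what makes the group-action argument close, is to do the split separately on each Littlewood--Paley piece and then sum norms. Write $\lambda_g=\sum_{j\geq0}\lambda_{g,j}$ with $\lambda_{g,j}=\psi_j*\lambda_g$, where $\psi_j=2^{jn}\psi(2^j\cdot)$ and, for $j\geq1$, $\widehat\psi$ is supported where $|\xi|\sim1$. Testing the Frostman condition against the kernel $\psi_j$ gives, uniformly in $g$,
\[
\sup_z|\lambda_{g,j}(z)|\le\sup_z\iint|\psi_j(z-x+gy)|\,d\mu(x)\,d\mu(y)\lesssim 2^{j(n-\alpha)}.
\]
Plancherel, the decay hypothesis \eqref{eq:beta_def}, and $\int_{|\xi|\sim2^j}|\widehat\mu|^2\lesssim2^{j(n-\alpha)}$ give
\[
\int_{O(n)}\|\lambda_{g,j}\|_{L^2}^2\,dg\lesssim 2^{-j(\beta_n(\alpha)-\epsilon)}\int_{|\xi|\sim2^j}|\widehat\mu(\xi)|^2\,d\xi\lesssim 2^{j(n-\alpha-\beta_n(\alpha)+\epsilon)}.
\]
Here the decay is genuine because the low frequencies have been removed. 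Hence
\[
\int_{O(n)}\int|\lambda_{g,j}|^{k+1}\,dz\,dg\lesssim 2^{j(k(n-\alpha)-\beta_n(\alpha)+\epsilon)}.
\]
Minkowski's inequality in $L^{k+1}(dz\,dg)$ then bounds $\|\lambda_g\|_{L^{k+1}(dz\,dg)}$ by a sum over $j$ that converges exactly when $\alpha+\beta_n(\alpha)/k>n$. This is applied to mollifications of $\mu$, uniformly in the mollification. No H\"older inequality in $g$ is needed, since the sup bound is uniform in $g$.

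A smaller point in Step 1: comparing $\epsilon$-approximate congruence with $\{(g,z):|x_i-gy_i-z|\lesssim\epsilon\}$ needs quantitative nondegeneracy. Saying the Jacobian is bounded on nondegenerate configurations is not uniform near the degenerate set. You should first restrict $\mu$ to pieces on which the simplices are uniformly nondegenerate. This is possible because degenerate tuples are $\mu^{k+1}$-null when $\alpha>k-1$.
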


Since \cite{GILP15} was published, improved spherical-average decay estimates have been proved in $n\geq 3$ by Du and Zhang \cite{DZ2019}, who obtained $\beta_{n}(\alpha)=\frac{n-1}{n}\alpha$. Incorporating this improvement, the group action method from \cite{GILP15} leads to the following 

\begin{theorem}[\cite{GILP15},\cite{DZ2019}]
    Assume $n\geq 2$ and $1\leq k\leq n$. Let $E\subset \R^n$ be a compact set with $\dim(E)>8/5$ if $n=k=2$ or $$\dim(E)>\alpha_{\textnormal{DZ/GILP}}(n,k):=\frac{kn^2}{(k+1)n-1}$$ if $n\geq 3$, then $\mathcal{L}^{\binom{k+1}{2}}(\Delta^{k-simplex}(E))>0.$
\end{theorem}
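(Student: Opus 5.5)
Since this statement is just the group-action criterion of Proposition \ref{prop:GILP_branch} with the spherical-decay exponent of Du and Zhang inserted, the plan is to check the inequality \eqref{eq:GILP_condition} for that exponent.

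\textbf{Case $n\geq 3$.} By \cite{DZ2019}, every $\alpha$-Frostman measure $\mu$ on $\R^n$ with $n\geq 3$ satisfies \eqref{eq:beta_def} with
\[
\beta_n(\alpha)=\frac{n-1}{n}\,\alpha .
\]
This holds in the range $\alpha\geq n/2$ relevant here. The Du--Zhang result is stated for $\mu$ supported in the unit ball; general compact $E$ is handled by rescaling, which changes only the implicit constants. Substituting into \eqref{eq:GILP_condition}, the criterion asks for
\[
\alpha\Bigl(1+\frac{n-1}{kn}\Bigr)>n
\iff \alpha\,\frac{(k+1)n-1}{kn}>n
\iff \alpha>\frac{kn^2}{(k+1)n-1}=\alpha_{\textnormal{DZ/GILP}}(n,k).
\]

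Now suppose $\dim(E)>\alpha_{\textnormal{DZ/GILP}}(n,k)$. Choose $\alpha$ with $\alpha_{\textnormal{DZ/GILP}}(n,k)<\alpha<\dim(E)$. By Frostman's lemma there is an $\alpha$-Frostman measure on $E$, and by the computation above this $\alpha$ satisfies \eqref{eq:GILP_condition}. Proposition \ref{prop:GILP_branch} then gives $\mathcal{L}^{\binom{k+1}{2}}(\Delta^{k-simplex}(E))>0$.

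One side condition must be checked: the Du--Zhang bound is used only for $\alpha\geq n/2$. This is automatic, because $\alpha_{\textnormal{DZ/GILP}}(n,k)\geq \frac{n^2}{2n-1}>\frac n2$, the first inequality holding since $\frac{kn^2}{(k+1)n-1}$ increases in $k$.

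\textbf{Case $n=k=2$.} This is exactly the theorem of \cite{GILP15}: $\dim(E)>8/5$ implies that the triangle set has positive $\mathcal{L}^3$ measure. If one wants to recover it from Proposition \ref{prop:GILP_branch}, use the Wolff decay exponent $\beta_2(\alpha)=\alpha/2$, valid for $\alpha\geq 1$. Then \eqref{eq:GILP_condition} with $k=2$ reads $\alpha+\alpha/4>2$, that is, $\alpha>8/5$.

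The only step needing care is the precise range and normalization of the Du--Zhang decay exponent in \eqref{eq:beta_def}, including the $\epsilon$-loss. That loss is harmless because \eqref{eq:GILP_condition} is a strict inequality, so it is absorbed by choosing $\alpha$ strictly above the threshold.
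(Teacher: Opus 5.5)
Your proposal is correct and matches the route the paper indicates: insert the Du--Zhang exponent $\beta_n(\alpha)=\frac{n-1}{n}\alpha$ (which is Wolff's $\alpha/2$ when $n=2$) into the group-action criterion of Proposition \ref{prop:GILP_branch}, solve $\alpha+\frac{1}{k}\beta_n(\alpha)>n$, and absorb the $\epsilon$-loss by choosing $\alpha$ strictly between the threshold and $\dim(E)$. Your side check that the threshold exceeds $n/2$ is harmless but not really needed, since for $\alpha\le n/2$ Mattila's classical exponents already dominate $\frac{n-1}{n}\alpha$.
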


For the pinned counterpart, the following result was obtained in \cite{IPPS22}, which, to our best knowledge, is the best pinned $k$-dimensional simplex result for $k\geq 2$ in the literature.

\begin{theorem}[\cite{IPPS22}]
    Assume $n\geq 2$ and $1\leq k<n$. Let $E\subset \R^n$ be a compact with $\dim(E)>\frac{n+k}{2}$. Then there exists $x\in E$ such that 
    $$\mathcal{L}^{\binom{k+1}{2}}(\Delta^{k-simplex}_{x}(E))>0.$$
\end{theorem}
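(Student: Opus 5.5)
The plan is to deduce this as a special case of the graph-building result, Theorem~\ref{thm: structuralthmforgraphs}. I would not reprove the $k$-star estimate of Theorem~\ref{thm:k starinIPPS} from scratch. The pinned $k$-simplex set $\Delta^{k-simplex}_{x}(E)$ is exactly the $1$-pinned $G$-set $\Delta^{G}_{\mathcal P,x}(E)$ of \eqref{def: graphdistanceset}, with:
\begin{itemize}
\item $G=K_{k+1}$;
\item vertex set $\mathcal V=\{v_1,\dots,v_{k+1}\}$;
\item edge set $\mathcal E=\{(i,j)\colon 1\le i<j\le k+1\}$;
\item a single pin $\mathcal P=\{v_1\}$, so $m=1$.
\end{itemize}
The edge-length vector lives in $\R^{|\mathcal E|}=\R^{\binom{k+1}{2}}$. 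Hence the claimed conclusion is precisely positivity of $\mathcal L^{|\mathcal E|}(\Delta^{G}_{\mathcal P,x}(E))$ for some $x\in E$.

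Next I would verify the hypotheses of Theorem~\ref{thm: structuralthmforgraphs}.
\begin{itemize}
\item Since $\mathcal P$ consists of one vertex, no pair of pinned vertices is joined by an edge.
\item For $k$-admissibility, list the unpinned vertices in the natural order $v_2,v_3,\dots,v_{k+1}$. In the complete graph, the back-degree of $v_i$ is $d_{<i}(v_i)=|\{v_1,\dots,v_{i-1}\}|=i-1$. This is at most $k$ for every $2\le i\le k+1$, so $(K_{k+1},\{v_1\})$ is $k$-admissible.
\item The standing assumptions $n\ge2$, $1\le k<n$ and $\dim(E)>\frac{n+k}{2}$ are exactly those of Theorem~\ref{thm: structuralthmforgraphs}.
\end{itemize}
That theorem then produces a point $x_1\in E$ with $\mathcal L^{\binom{k+1}{2}}(\Delta^{G}_{\mathcal P,x_1}(E))>0$.

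There is a minor bookkeeping point. The $G$-set in \eqref{def: graphdistanceset} requires the unpinned vertices to be distinct, whereas $\Delta^{k-simplex}_{x}(E)$ as written does not impose distinctness. Thus $\Delta^{G}_{\mathcal P,x_1}(E)\subseteq\Delta^{k-simplex}_{x_1}(E)$, and positivity transfers by monotonicity of Lebesgue measure.

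If one wanted a self-contained argument instead, mirroring the mechanism behind Theorem~\ref{thm: structuralthmforgraphs}, I would proceed as follows.
\begin{enumerate}
\item Fix an $\alpha$-Frostman measure $\mu$ with $\frac{n+k}{2}<\alpha<\dim(E)$.
\item Choose pairwise separated compact pieces $E_1,\dots,E_{k+1}$ of positive $\mu$-measure, with the appropriate transversality so that Theorem~\ref{thm:k starinIPPS} applies at each stage.
\item Induct on the vertices $v_2,\dots,v_{k+1}$. At step $i$, attaching $v_i$ to the already placed $v_1,\dots,v_{i-1}$ is an $(i-1)$-star. Theorem~\ref{thm:k starinIPPS} with $i-1\le k$ pins gives an $L^2$ density for the pushforward for almost every choice of previous vertices.
\item Combine these by Fubini, viewing the joint pushforward of $\mu_{E_2}\times\cdots\times\mu_{E_{k+1}}$ under the full edge-length map as iterated fibered $L^2$ densities. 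This gives absolute continuity of the pinned simplex measure for $\mu_{E_1}$-a.e. pin, hence positive measure of its support.
\end{enumerate}

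The main obstacle in this direct route is step 4. One must ensure the fibered densities assemble into a genuinely absolutely continuous measure on $\R^{\binom{k+1}{2}}$, rather than merely having absolutely continuous slices. One must also ensure the transversality required at each stage can be arranged on a set of positive measure. Both are exactly what the machinery of \cite{BFOPR2026} handles, so I would rely on Theorem~\ref{thm: structuralthmforgraphs} and keep the proof to the admissibility check above.
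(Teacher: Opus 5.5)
Your proposal is correct and follows the paper's own reasoning. The paper does not reprove this statement (it is cited from \cite{IPPS22}), but it derives its simplex result, Corollary~\ref{cor: simplices}, exactly as you do: $K_{k+1}$ with one pinned vertex is $k$-admissible, and the graph-building theorem then applies, so Theorem~\ref{thm: structuralthmforgraphs} at threshold $\frac{n+k}{2}$ gives the stated result, with your back-degree count and the inclusion $\Delta^{G}_{\mathcal P,x}(E)\subseteq \Delta^{k-simplex}_{x}(E)$ both correct.
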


Since the complete graph on \(k+1\) vertices, corresponding to a \(k\)-simplex, is \(k\)-admissible even when one vertex is pinned, we can apply Corollary \ref{cor: pinnedgraphs} to obtain the following improvement over \cite{IPPS22}.

\begin{corollary}[New threshold for $k$-simplices] \label{cor: simplices}
Let $1\leq k<n$. If $E\subset \R^n$ is a compact set satisfying that \[\dim(E) > \alpha_{+}(n,k):=\frac{n + k - 1}{2} + \frac{1}{4} + \frac{2k + 1}{4(2n + 1)},\]
then there exists $x\in E$ such that one has positive measure of $k$-simplices pinned at $x$, namely  
$$\mathcal{L}^{\binom{k+1}{2}}\left(\Delta_{x}^{k\text{-simplex}}(E)\right)>0.$$ 
\end{corollary}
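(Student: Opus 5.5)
The plan is to obtain Corollary~\ref{cor: simplices} as a direct application of Corollary~\ref{cor: pinnedgraphs}, with $G=K_{k+1}$ and a single pin. Two things need checking: that this pinned graph satisfies the hypotheses of Corollary~\ref{cor: pinnedgraphs}, and that the resulting $G$-set is the pinned simplex set $\Delta_x^{k\text{-simplex}}(E)$.

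\textbf{Step 1: $(K_{k+1},\{v_1\})$ is a valid $k$-admissible pinned graph.} Take $\mathcal{P}=\{v_1\}$, so $m=1$. No edge of $K_{k+1}$ has both endpoints in $\mathcal{P}$, because an edge needs two distinct vertices and $\mathcal P$ has only one. For admissibility, list the unpinned vertices in the order $v_2,\dots,v_{k+1}$. For $i\ge 2$, the back-degree $d_{<i}(v_i)$ counts the neighbours of $v_i$ among $v_1,\dots,v_{i-1}$. In a complete graph all of these are neighbours, so $d_{<i}(v_i)=i-1\le k$. Hence the pinned graph is $k$-admissible, and $|\mathcal E|=\binom{k+1}{2}$.

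\textbf{Step 2: apply Corollary~\ref{cor: pinnedgraphs} and identify the set.} Since $1\le k<n$ and $\dim(E)>\alpha_+(n,k)$, Corollary~\ref{cor: pinnedgraphs} gives a point $x=x_1\in E$ such that $\Delta^{K_{k+1}}_{\{v_1\},x}(E)$ has positive $\binom{k+1}{2}$-dimensional Lebesgue measure. By definition \eqref{def: graphdistanceset}, this set consists of the vectors $(|x_i-x_j|)_{1\le i<j\le k+1}$ with $x_1=x$ and $x_2,\dots,x_{k+1}\in E$ distinct. It is therefore a subset of $\Delta_x^{k\text{-simplex}}(E)$; the only difference is that the latter does not require distinctness. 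Monotonicity of Lebesgue measure then gives the conclusion.

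There is no genuine obstacle here: all of the analytic work is contained in Theorem~\ref{thm:main}, fed through the graph-building theorem of \cite{BFOPR2026} (whose $k$-admissible version is Theorem~\ref{thm: structuralthmforgraphs}, with the $L^2$ $k$-star abundance supplied by Theorem~\ref{thm:main} in place of Theorem~\ref{thm:k starinIPPS}). The one point to verify carefully is that the pin in Corollary~\ref{cor: pinnedgraphs} lies in $E$ and that the edge-indexing convention matches that of $\Delta_x^{k\text{-simplex}}$. Both follow directly from the definitions above.
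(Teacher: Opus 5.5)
Your proposal is correct and follows the paper's argument: the paper also obtains this corollary by observing that $K_{k+1}$ with one pinned vertex is $k$-admissible and then invoking Corollary~\ref{cor: pinnedgraphs}. You also supply two details the paper leaves implicit, namely the back-degree count $d_{<i}(v_i)=i-1\leq k$ and the fact that the distinctness requirement in the $G$-set only makes it a subset of $\Delta_x^{k\text{-simplex}}(E)$.
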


\begin{remark}[Comparison with the unpinned simplex thresholds]
Since
\[
\Delta_x^{k\text{-simplex}}(E)\subset \Delta^{k\text{-simplex}}(E)
\]
whenever \(x\in E\), Corollary~\ref{cor: simplices} also implies an unpinned
positive-measure result for \(k\)-simplices under the condition
\[
\dim(E)>\alpha_{+}(n,k).
\]
It is therefore natural to compare this threshold with the one obtained by combining
the group-action method of~\cite{GILP15} with the Du--Zhang spherical-average decay
estimate~\cite{DZ2019}, namely
\[
\alpha_{\mathrm{DZ/GILP}}(n,k)
:=
\frac{kn^2}{(k+1)n-1}.
\]

For \(n=2\) and \(k=2\), Corollary~\ref{cor: simplices} does not apply, and the
relevant unpinned triangle threshold is the GILP threshold \(8/5\). For \(n=3\), the
GILP/DZ threshold is better for both \(k=2\) and \(k=3\): in particular,
\[
\alpha_{\mathrm{DZ/GILP}}(3,2)=\frac94
<
\frac{17}{7}
=
\alpha_{+}(3,2),
\]
while \(k=3\) lies outside the nontrivial range of Corollary~\ref{cor: simplices}.

In dimensions \(n\geq 4\), there is a split. For \(2\leq k\leq n-2\), one has
\[
\alpha_{+}(n,k)<\alpha_{\mathrm{DZ/GILP}}(n,k),
\]
so Corollary~\ref{cor: simplices} gives the better currently available threshold for
positive measure of \(k\)-simplices. On the other hand, for \(k=n-1\) and \(k=n\), the
GILP/DZ threshold is better. In the case \(k=n\), our pinned \(k\)-star theorem is
vacuous, since the nontrivial range for pinned \(k\)-stars requires \(k<n\).

\end{remark}

As a corollary of our new threshold for $2$-stars, we improve the best known threshold for pinned triangles in all dimensions $n\geq 3$. We state this explicitly as a corollary.

\begin{corollary}\label{cor: triangles} (Improvement for pinned triangles) 
    Let $n\geq 3$. For any $E\subset \R^n$ with $\dim(E)>\frac{n+1}{2}+\frac{1}{4}+\frac{5}{4(2n+1)}$, there exists $x\in E$ such that $\mathcal{L}^3(\Delta_x^{triangle}(E))>0$.
    
\end{corollary}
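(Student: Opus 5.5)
This follows from Corollary~\ref{cor: pinnedgraphs} with $k=2$ applied to the complete graph $K_3$ with one pinned vertex. Let $G=K_3$ with vertex set $\mathcal V=\{v_1,v_2,v_3\}$, edge set $\mathcal E=\{(1,2),(2,3),(1,3)\}$ and pin set $\mathcal P=\{v_1\}$, so $m=1$. No edge is fully pinned, since there is only one pin. In the ordering $v_1,v_2,v_3$, the back-degree of $v_2$ is $1$ (its only earlier neighbour is $v_1$) and that of $v_3$ is $2$ (its earlier neighbours are $v_1,v_2$). Hence $(K_3,\{v_1\})$ is $2$-admissible.

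The hypothesis $n\geq 3$ gives $k=2<n$, as Corollary~\ref{cor: pinnedgraphs} requires. With $k=2$ the threshold there becomes
\[
\alpha_+(n,2)=\frac{n+1}{2}+\frac14+\frac{5}{4(2n+1)},
\]
which is exactly the hypothesis of Corollary~\ref{cor: triangles}. Corollary~\ref{cor: pinnedgraphs} therefore produces a point $x_1\in E$ for which the set
\[
\Delta^{K_3}_{\mathcal P,x_1}(E)=\{(|x_1-x_2|,|x_2-x_3|,|x_1-x_3|):x_2,x_3\in E\text{ distinct}\}
\]
has positive $3$-dimensional Lebesgue measure.

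This set is $\Delta^{triangle}_{x_1}(E)$ up to a permutation of coordinates, and coordinate permutations preserve $\mathcal L^3$. Taking $x=x_1$ finishes the proof.

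The only point needing care is that definition~(\ref{def: graphdistanceset}) and the pinned triangle set impose the same distinctness conditions. In both, $x_2,x_3$ are required to be distinct. Corollary~\ref{cor: pinnedgraphs} already makes the pins distinct from one another, and any degenerate configurations with $x_2$ or $x_3$ equal to $x_1$ would produce vectors having a zero coordinate. Those vectors form a Lebesgue-null set, so they do not affect positivity of measure in either direction. I do not expect any real obstacle here: all the analytic content is in Theorem~\ref{thm:main}, passed through the graph-building result of \cite{BFOPR2026}.
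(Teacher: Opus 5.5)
Your proposal is correct and is the paper's own argument: the paper obtains this as the $k=2$ case of Corollary~\ref{cor: simplices}, i.e.\ Corollary~\ref{cor: pinnedgraphs} applied to $K_3$ pinned at one vertex, which is $2$-admissible, and $\alpha_+(n,2)$ matches the stated threshold. One implicit point, which the paper shares, is that Corollary~\ref{cor: pinnedgraphs} is stated for compact $E$, so the statement should be read with $E$ compact, as throughout the paper.
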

\begin{remark}
The preceding corollary improves the pinned triangle threshold, but it does not close
the gap between the best pinned and unpinned results in low dimensions. In dimension
\(n=2\), the unpinned triangle problem has the nontrivial threshold \(8/5\), while no
nontrivial pinned analogue is currently known. In dimension \(n=3\), our result lowers
the pinned threshold from \(5/2\) to \(17/7\), but this remains above the unpinned
threshold \(\alpha_{DZ/GILP}(3,2)=9/4\) obtained by group-action methods.
\end{remark}

\medskip

Moving on to applications of Theorem \ref{thm: nonemptyinteriorforkstars}, although the nonempty-interior result for pinned \(k\)-stars does not currently feed into a general graph-building theorem for nonempty interior for distance sets of \(k\)-admissible graphs, it can still be used as a building block in some specific configurations. We illustrate this for even cycles pinned at every other vertex, using the nonempty-interior result for \(2\)-stars.
\begin{corollary}\label{cor:interiorforevennecklaces}
Assume $n\geq 5$, and let $E\subset \R^n$ with
\[
\dim(E)>\alpha^{\circ}(n,2)
=\frac{n+3}{2}+\frac{1}{4}+\frac{9}{4(2n+1)}.
\]
For every $l\geq 2$, let $C_{2l}$ be an even cycle with $2l$ edges, and consider the corresponding pinned distance set with pins at every other vertex, namely
\[
\Delta_{x_1,x_3,\dots,x_{2l-1}}^{C_{2l},\text{l pins}}(E)
:=
\bigl\{(|x_1-x_2|,
|x_2-x_3|,|x_3-x_4|,\dots,
|x_{2l-1}-x_{2l}|,|x_{2l}-x_1|)
:\, x_{2i}\in E,\ \forall\,1\leq i\leq l\bigr\}.
\]
Then there exists
$x_1,x_3,\dots,x_{2l-1}\in E$
such that
\[
\Delta_{x_1,x_3,\dots,x_{2l-1}}^{C_{2l},\text{l pins}}(E)
\]
has nonempty interior in $\R^{2l}$.
\end{corollary}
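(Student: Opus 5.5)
The plan is to build the cycle pinned at every other vertex by chaining together $2$-stars, using Theorem~\ref{thm: nonemptyinteriorforkstars} with $k=2$; note that $n\geq 5$ guarantees $2<n/2$. The cycle $C_{2l}$ with pins $x_1,x_3,\dots,x_{2l-1}$ splits into $l$ independent $2$-stars. For each $i$, the free vertex $x_{2i}$ is joined only to the two consecutive pins $x_{2i-1}$ and $x_{2i+1}$, with indices taken mod $2l$, so $x_{2l}$ is joined to $x_{2l-1}$ and $x_1$. Since the $x_{2i}$ are chosen independently, the pinned cycle set factors as a product (after the obvious reordering of coordinates):
\[
\Delta_{x_1,x_3,\dots,x_{2l-1}}^{C_{2l},\text{l pins}}(E)=\prod_{i=1}^{l}\Delta^{2\text{-star}}_{x_{2i-1},x_{2i+1}}(E).
\]
We require the $x_{2i}$ to lie in $E$ but not to be distinct, in line with the displayed definition. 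If distinctness were required, we would simply restrict each $x_{2i}$ to a separate compact piece of $E$. A product of sets each having nonempty interior in $\R^2$ has nonempty interior in $\R^{2l}$. So it is enough to find pins such that each of the $l$ consecutive pairs $(x_{2i-1},x_{2i+1})$ gives a $2$-star set with nonempty interior.

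To find such pins, fix an $\alpha$-Frostman measure $\mu$ on $E$ with $\alpha^{\circ}(n,2)<\alpha<\dim E$. Theorem~\ref{thm: nonemptyinteriorforkstars} gives an abundance of good pin pairs in the sense of the remark on abundance. For any pairwise separated, transverse compact $F_1,F_2\subset E$ with positive $\mu$-measure, the set of bad pairs in $F_1\times F_2$ has $\mu_{F_1}\times\mu_{F_2}$-measure zero. The next step is to choose $l$ compact sets $F_1,\dots,F_l\subset E\setminus\{0\}$ that have positive $\mu$-measure, are pairwise separated, and have every consecutive pair $\{F_j,F_{j+1}\}$ (cyclically) transverse. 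Transversality here means $|y\wedge z|\geq c$. Since $\alpha>1$, $\mu$ is not supported on any line through the origin; in fact $\operatorname{supp}\mu$ contains $n\geq 5$ linearly independent points. Choose points $p_1,\dots,p_l\in\operatorname{supp}\mu$ so that consecutive ones are linearly independent, for instance by alternating between two independent support points. Because $p_j$ and $p_{j+1}$ must also be distinct and separated, we alternate among nearby points of the support, which works since the support has no isolated mass. Then take $F_j$ to be small closed balls around the $p_j$ intersected with $\operatorname{supp}\mu$. By continuity of the wedge, these sets are transverse and separated.

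Finally, put $(x_1,x_3,\dots,x_{2l-1})=(y_1,\dots,y_l)\in F_1\times\cdots\times F_l$ and consider the product measure $\mu_{F_1}\times\cdots\times\mu_{F_l}$. For each $j$, the bad set $B_j=\{(y_j,y_{j+1}) \text{ bad}\}$ has $\mu_{F_j}\times\mu_{F_{j+1}}$-measure zero. By Fubini, its preimage in the full product also has measure zero, and the same holds for the wrap-around pair $(y_l,y_1)$. A finite union of null sets is null, so almost every $l$-tuple of pins is good for all $l$ stars simultaneously. For $l=2$ the two stars are on the same pins in swapped order, and swapping coordinates preserves nonempty interior, so this case also works. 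I expect the main obstacle to be this bookkeeping around the abundance notion: making sure the sets $F_j$ can be chosen transverse and separated, and avoiding the origin, which is harmless after a translation of $E$. The analytic content comes entirely from Theorem~\ref{thm: nonemptyinteriorforkstars}.
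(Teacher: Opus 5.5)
Your argument is correct and is essentially the paper's: the pinned cycle set factors as a product of $l$ pinned $2$-star sets, and Theorem~\ref{thm: nonemptyinteriorforkstars} with $k=2$ gives full-measure sets of good pin pairs; intersecting these yields pins for which every factor, and hence the product, has nonempty interior (the paper intersects them by inductively gluing $2$-stars onto pinned even chains, while you treat all $l$ pairs at once). One small repair is needed: for odd $l$, alternating between two directions leaves $p_l$ and $p_1$ nearly parallel, so the wrap-around pair is not transverse; instead choose $p_1,\dots,p_l\in\operatorname{supp}\mu$ pairwise linearly independent, which is possible because $\alpha>1$ forces $\mu$ to give zero mass to every line through the origin (also drop the claim that $\operatorname{supp}\mu$ spans $\R^n$, which can fail for $n\geq 6$ and is not needed).
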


\begin{remark}
Nonempty interior for odd cycles is still out of reach with our methods. For example, a triangle is a $2$-admissible graph, but it is not clear whether a nonempty-interior result for $2$-stars necessarily implies
a nonempty-interior result for triangle distance sets. We refer to \cite{PR2023} for an unpinned nonempty interior result for triangles with threshold $\frac{2n}{3}+1$ in $\R^n,\,n\geq 4$. 
\end{remark}

\subsection{Notation} 

\begin{enumerate}
    \item $\mathbb{P}^{n}$ will denote the truncated paraboloid in $\R^{n+1}$ defined by
    \[\mathbb{P}^{n} = \{(x, |x|^2) : x \in \R^n, |x| \leq 1\}.\] 
    We might also consider the infinite paraboloid 
    $$\mathbb{P}^n_{\infty}:=\{(x, |x|^2) : x \in \R^n\}.$$
    \item For points $y \in \R^{n+1}$, we will use the notation $y = (x, t)$ where $x \in \R^n$ and $t \in \R$.
    \item Even if not explicitly stated, we always assume $n\geq 2$.
    \item For $f: \R^n \to \C$, $\mc{E}f$ will denote the Fourier extension operator for the truncated paraboloid
    \[\mc{E}f(y) = \int_{B^{n}(0,1)}f(z)e^{2\pi i(x \cdot z + t|z|^2)} \,dz,\,\text{ for all }y=(x,t)\in \R^{n+1}. \]
    \item We will use $E, E_j$ to denote subsets of Euclidean space $\R^n$ and $F,F_j$ to denote subsets of the paraboloid $\mathbb{P}^{n} \subset \R^{n+1}$.
    \item  $\pi: \mathbb{P}^n_{\infty} \setminus\{0\}\to \R^n\setminus\{0\}$ will denote the map
    \[ \pi(y) = \frac{-\ov{y}}{2|\ov{y}|^2}.\] One can check that the inverse of $\pi$ is given by $\pi^{-1}(x)=\left(\frac{-x}{2|x|^2},\frac{1}{4|x|^2}\right).$
    \item $x_1,...,x_k$ will denote points in $\R^n\setminus\{0\}$ and $y_1, ..., y_k$ will denote the corresponding points
    \[y_j = \pi^{-1}(x_j)\in \mathbb{P}^n_{\infty}.\]
    \item $\bm{x}$ will denote the vector $(x_1, ..., x_k) \in \R^{nk}$ and similarly for $\bm{y}$ ($k$ will be fixed).
    \item For all $k\geq 1$, we denote the distance maps $$d_{x_1, x_2, \dots ,x_k}(x_{k+1}) := (|x_1 - x_{k+1}|,|x_2-x_{k+1}|,\dots ,|x_k-x_{k+1}|)$$ and the dot product maps $$p_{y_1, \dots, y_k}(y_{k+1}) := (y_1 \cdot y_{k+1}, y_2\cdot y_{k+1}, \dots, y_k\cdot y_{k+1}).$$
\end{enumerate}

\subsection*{Organization of the paper} The paper is organized as follows. In Section~\ref{sec:reductiontorestriction}, we reduce the \(k\)-star problem in $\mathbb{R}^n$ to a weighted restriction estimate for the paraboloid in \(\R^{n+1}\). In Section~\ref{sec:facts_weight}, we study the geometric and dimensional properties of the weights arising in this reduction. The proof of Theorem \ref{thm:main} is presented in Section \ref{ref:ProofofThmA}. In Section~\ref{sec:nonemptyanddimresults}, we prove Theorem~\ref{thm: nonemptyinteriorforkstars}, the nonempty-interior counterpart of Theorem~\ref{thm:main}, establish dimension estimates below the dimensional threshold in Theorem~\ref{thm:main}, and illustrate how dimension estimates for \(2\)-stars can be used to obtain dimension estimates for triangles. In the appendix, we adapt ideas from Du--Zhang \cite{DZ2019} to prove Theorem~\ref{thm: nonemptyinteriorfor1star}, and we also give an application of Theorem~\ref{thm:main} to derive a discrete variant of it. To our knowledge, this seems to be the first result concerning pinned $k$-stars in the discrete setting.

\subsection*{Acknowledgement}
Y.O. is supported in part by NSF DMS-2142221. The authors would like to thank Jill Pipher for many helpful conversations on the topic of this paper.

\section{Reduction to Weighted estimates of the Paraboloid Extension Operator}\label{sec:reductiontorestriction}

In this section, we relate the key $L^2$ integral of the density of a pinned $k$-star to a weighted Fourier extension estimate for the paraboloid. Throughout the section, we assume that $n\geq 2$ and $1\leq k<n$. We first introduce some definitions. 
\begin{definition}
    Given points $\bm{x} = (x_1, ..., x_k) \in (\R^n)^{k}$, we let $A_{\bm{x}}$ denote the affine span of the points $x_1, ..., x_k$. In other words, $A_{\bm{x}}$ is the smallest affine subspace of $\R^n$ containing the points $x_1, ..., x_k$, or equivalently,
    \[
A_{\mathbf x}
=
\left\{
\sum_{j=1}^k \lambda_j x_j:
\lambda_1,\dots,\lambda_k\in\mathbb R,\ 
\sum_{j=1}^k \lambda_j=1
\right\}.
\]
\end{definition}

In the same way that the distance map $d_x = |\cdot - x|$ is closely related to the spherical average operator, the push-forward of $d_{\bm{x}}$ is related to a lower-dimensional spherical averaging operator. To define this operator, we make the following definitions.

\begin{definition}
    Let $\textup{Aff} \subset (\R^n)^k$ be the set of points that are affinely linearly independent. That is,
    \[\textup{Aff} := \{\bm{x} = (x_1, ..., x_k) \in (\R^n)^k : \text{dim}(A_{\bm{x}}) = k - 1\}.\]
\end{definition}

\begin{definition}
    For $\bm{x} = (x_1, \ldots, x_k) \in \textup{Aff}$ and $\bm{t} = (t_1, \ldots, t_k) \in (0,\infty)^k$, we denote 
    \[S^{n - k}(\bm{x}, \bm{t}) := S^{n - 1}(x_1, t_1) \cap \cdots \cap S^{n - 1}(x_k, t_k)
    .\]
    In other words,
    \[S^{n-k}(\bm{x},\bm{t}) = d_{\bm{x}}^{-1}(\bm{t}) =\{x_{k+1}\in\R^n: |x_j-x_{k+1}|=t_j,\ 1\leq j\leq k\}.\]
\end{definition}

It's a well-known fact that under the assumption $\bm{x} \in \text{Aff}$, $S^{n-k}(\bm{x}, \bm{t})$ is either empty, a point, or a $(n - k)$-dimensional sphere (see \cite{MLG17} Theorem 2.1). Let $D_{\bm{x}}: \R^n \to \R$ denote the determinant of the Jacobian of the distance map $d_{\bm{x}}:\R^n\rightarrow \R^k$:
\[D_{\bm{x}}(x_{k+1}) = (\det(\nabla d_{\bm{x}}(x_{k+1})\nabla d_{\bm{x}}(x_{k+1})^T))^{1/2}.\] 
  Note that $\nabla d_{\bm{x}}(x_{k+1})$ is the $k \times n$ matrix given by 
\[\left(\frac{x_1 - x_{k+1}}{|x_1 - x_{k+1|}}, \dots , \frac{x_k - x_{k+1}}{|x_k - x_{k+1}|}\right)^T.\]

Therefore
\[D_{\bm{x}}(x_{k+1}) > 0 \quad \Leftrightarrow \quad \text{rank}(\nabla d_{\bm{x}}(x_{k+1})) = k \quad \Leftrightarrow \quad x_{k+1} \not \in A_{\bm{x}}.\]
For the first equivalence, we used the fact that $\text{rank}(AA^{T}) = \text{rank}(A)$ for any matrix $A$, and for the second equivalence, we used that $x_{k+1} \not \in A_{\bm{x}}$ means $\{x_j - x_{k+1}\}_{1 \leq j \leq k}$ is linearly independent. It follows that the set of critical points of $d_{\bm{x}}$ has dimension $k - 1$:
\[\text{dim}\{x_{k+1} \in \R^n : D_{\bm{x}}(x_{k+1}) = 0 \} = \text{dim}(A_{\bm{x}}) = k - 1\]
and thus the set of critical values 
\[C_{\bm{x}} := d_{\bm{x}}(A_{\bm{x}} \setminus \{x_1, ..., x_k\})\]
has Lebesgue measure zero in $(0, \infty)^k$. Therefore by the regular value theorem, $d_{\bm{x}}^{-1}(\bm{t}) = S^{n - k}(\bm{x}, \bm{t})$ is an $(n-k)$-dimensional sphere (or empty) for all $\bm{t} \not \in C_{\bm{x}}$. 

Therefore 
\[C_{\bm{x}} = \{\bm{t} \in \R^k : S^{n-k}(\bm{x}, \bm{t}) \text{ is a point}\}\]

Also let 
\[Z_{\bm{x}} = \{\bm{t} \in \R^k : S^{n-k}(\bm{x}, \bm{t}) = \emptyset\}.\] 
We are now ready to define the spherical averaging operator.

\begin{definition}\label{def:spherical_average} Let $\bm{x} \in \textup{Aff}$. We define
    \[\mc{A}_{\bm{t}}f(\bm{x}) := 
        \int_{S^{n-k}(\bm{x}, \bm{t})}f(y) \, d\sigma_{\bm{x},\bm{t}}(y), \quad \bm{t} \not \in C_{\bm{x}} \cup Z_{\bm{x}}\]
and define $\mc{A}_{\bm{t}}f(\bm{x}) = 0$ for $\bm{t} \in C_{\bm{x}} \cup Z_{\bm{x}}$. Here the surface measure $\sigma_{\bm{x},\bm{t}}$ on $S^{n - k}(\bm{x}, \bm{t})$ is
$$d\sigma_{\bm{x},\bm{t}}(x_{k+1}):=\frac{d\mathcal{H}^{n-k}|_{S^{n-k}(\bm{x},\bm{t})}(x_{k+1})}{D_{\bm{x}}(x_{k+1})}$$
where $\mc{H}^{n-k}|_{S^{n-k}}$ is the \emph{unnormalized} $(n-k)$-dimensional Hausdorff measure on $S^{n - k}(\bm{x}, \bm{t})$.
\end{definition}

\begin{remark}
    If $\bm{t} \not \in C_{\bm{x}}$ then by definition, $\bm{t}$ is a regular value of $d_{\bm{x}}$ and so for $x_{k+1} \in S^{n-k}(\bm{x}, \bm{t})$, we know that $D_{\bm{x}}(x_{k+1}) > 0$. Therefore the definition of $\sigma_{\bm{x}, \bm{t}}$ makes sense. 
\end{remark}

\begin{remark}
The reason we choose this definition is so that we can identify the measure $\mc{A}_{\bm{t}}f(\bm{x}) \, d\bm{t}$ with the pushforward of $f$ under the distance map $d_{\bm{x}}$. Indeed, using the coarea formula, we can compute 
\begin{align*}
\int_{\R^k}g(\bm{t})d((d_{\bm{x}})_{*}(f \, dx_{k+1}))(\bm{t}) & = \int_{\R^n}g(|x_{k+1} - x_1|, ..., |x_{k+1} - x_k|)f(x_{k+1}) \, dx_{k+1} \\
& = \int_{(0,\infty)^k}\int_{S^{n-k}(\bm{x}, \bm{t})}g(\bm{t})f(x_{k+1})D_{\bm{x}}(x_{k+1})^{-1} \, d\mc{H}^{n-k}(x_{k+1}) \, d\bm{t} \\
& = \int_{(0,\infty)^k}g(\bm{t})\mc{A}_{\bm{t}}f(\bm{x}) \, d\bm{t}. 
\end{align*}
In particular, by setting $g = 1$, we obtain the generalized polar coordinates formula
\[\int_{\R^n}f(x_{k+1}) \, dx_{k+1} = \int_{(0,\infty)^k}\int_{S^{n-k}(\bm{x}, \bm{t})}f(x_{k+1}) \, d\sigma_{\bm{x},\bm{t}}(x_{k+1}) \, d\bm{t}.\]
Also note that the pushforward measure $(d_{\bm{x}})_{*}(f)(\bm{t})$ and $\mathcal{A}_{\bm{t}}(f)(\bm{x})$ have the same support as measures in the variable $\bm{t}$, contained in the $k$-star set of interest.
\end{remark}

\begin{remark}
    By setting $k = 1$, the definition of $\mc{A}$ gives the spherical average operator with respect to \emph{unnormalized} surface measure:
    \[\mc{A}_tf(x) = \int_{S^{n-1}(x,t)}f(y) \, d\sigma_{x,t}(y).\]
    In the literature, the spherical average operator is usually defined with the normalized surface measure, but we choose to define our operator to be unnormalized because it makes the coarea identity above more direct.
\end{remark}

 Next, for $x \in \R^n\setminus\{0\}$, let $\Gamma_{x}: (-1/4, \infty) \to (0, \infty)$ denote the map 
$$\Gamma_x(s) = |x|\sqrt{1 + 4s}.$$

The importance of this map in our context comes from the fact observed in \cite{IPPS22} that for a fixed point $y\in \mathbb{P}^n_{\infty}\setminus\{0\}$, and $s>-1/4$, then for any $x\in \R^n$
\begin{equation}\label{distancedotproductrelation}
   y\cdot (x, |x|^2) = s  \quad \Leftrightarrow  |\pi(y) - x| = \Gamma_{\pi(y)}(s).
    \end{equation}

Checking the equivalence above is straightforward by squaring and unraveling definitions in the equality on the right-hand side. This equivalence carries rich geometric information. Indeed, for $y\in \mathbb{P}^n_{\infty}$ consider the following level set of the map that takes the dot product between a point in the paraboloid and $y$, 
$$\mathcal{L}_y(s):=\{(x,|x|^2)\in \mathbb{P}^n_{\infty}\colon y\cdot (x,|x|^2)=s\},$$
which is the intersection of the paraboloid $\mathbb{P}^n_{\infty}$ with a hyperplane of normal direction $y$. Identity (\ref{distancedotproductrelation}) tell us if we orthogonally project $\mathcal{L}_y(s)$ onto the first $n$ coordinates, we obtain a sphere in $\R^n$ centered at $\pi(y)=\frac{-\bar{y}}{2|\bar{y}|^2}$ and radius $\Gamma_{\pi(y)}(s):=|\pi(y)|\sqrt{1 + 4s}$, that is, a level set of the map $d_{\pi(y)}(x)=|x-\pi(y)|$. We illustrate that in Figure \ref{fig:paraboloid-planes} with $n=2$ and $y=(0,-1,1)$, in which case $\pi(y)=(0,1/2)$.

\begin{figure}[h]
    \centering
\includegraphics[width=0.65\textwidth]{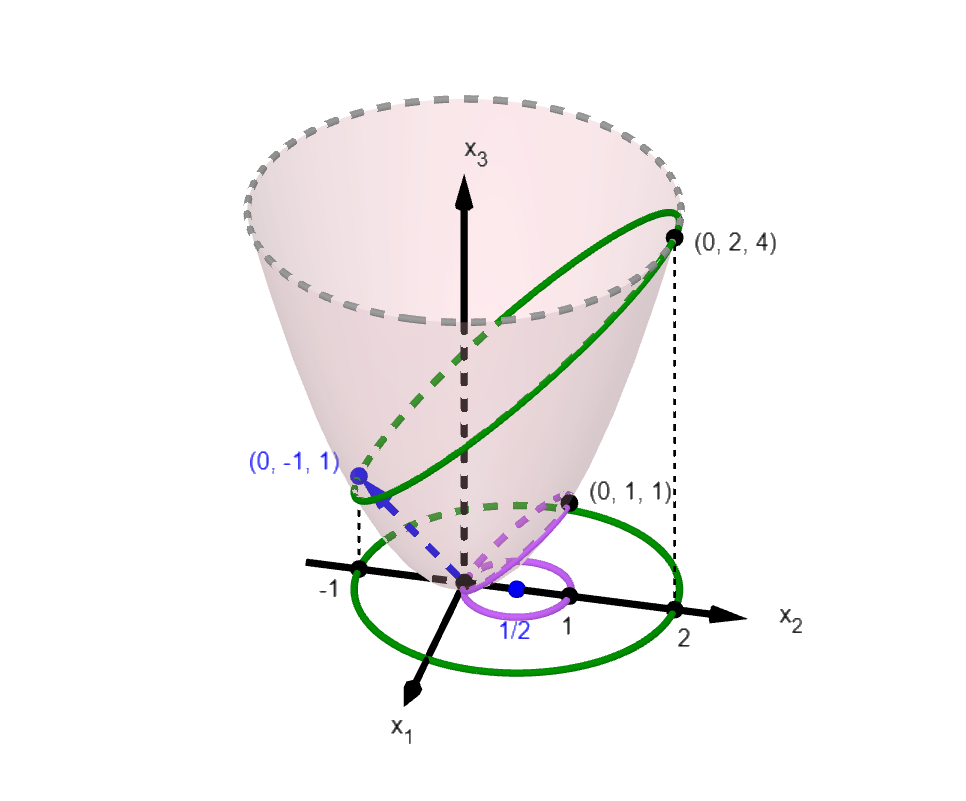}
    \caption{Representation of $\mathcal{L}_y(s)$ for $y=(0,-1,1)$ and $s=0$ and $s=2$, and the corresponding projected circles in the $(x_1,x_2)$ plane centered at $(0,1/2)$. For $s=-1/4$, $\mathcal{L}_y(-1/4)$ would be the single point $(0,1/2,1/4)$.}
    \label{fig:paraboloid-planes}
\end{figure}

Next, we will introduce some notation that will be useful for the next lemma. Define
\[J_x(s) := \Gamma_x'(s) = \frac{2|x|}{\sqrt{1 + 4s}} = \frac{2|x|^2}{\Gamma_x(s)}.\]
We also define for $\bm{x}\in \R^{kn}$, $\Gamma_{\bm{x}}: (-1/4,\infty)^k \to (0,\infty)^k$ and $J_{\bm{x}}: (-1/4,\infty)^k \to \R$ as 
\[\Gamma_{\bm{x}}(\bm{s}) :=\bigl(\Gamma_{x_1}(s_1),\dots,\Gamma_{x_k}(s_k)\bigr)
\]
and
\[J_{\bm{x}}(\bm{s}) := J_{x_1}(s_1) \cdots J_{x_k}(s_k).\]

Finally, for \(\bm t\in(0,\infty)^k\), set

\begin{equation}\label{def: weightW}
W(\bm{x},\bm t)
:=
\prod_{j=1}^k J_{x_j}(\Gamma_{x_j}^{-1}(t_j))
=
\frac{2^k|x_1|^2\cdots |x_k|^2}{t_1\cdots t_k}.
\end{equation}


The following theorem relates the $L^2$ integral of $\mc{A}_{\bm{t}}f(\bm{x})$ with respect to the weight $W$ to a weighted $L^2$ integral of the Fourier extension operator $\mc{E}f$.

\begin{lemma}\label{lemma: equalityreplacingliuidentity}
      Let $n\geq 2$ and $1 \leq k < n$. For any $\bm{x}=(x_1,\dots ,x_k)\in \textup{Aff}$, with $x_i\neq 0$ for all $1\leq i\leq k$, define $y_j:=\pi^{-1}(x_j)$ for all $1\leq j\leq k$. Then, 
     \[\int_{(0,\infty)^{k}}|\mc{A}_{\bm{t}}f(\bm{x})|^2W(\bm{x}, \bm{t}) \, d\bm{t} = \int_{\R^k} |\mc{E}f(s_1y_1 + \cdots + s_ky_k)|^2 \, d\bm{s},\]
     for any $f\in C^{\infty}_0(B^n(0,1))$, where $W(\bm{x},\bm{t})$ is defined as in (\ref{def: weightW}). 
\end{lemma}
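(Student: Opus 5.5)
The plan is to read both sides of the identity as the same $L^2(\R^k)$ norm of one pushforward density, written in two coordinate systems. The right-hand side is a Fourier transform along the $k$-plane spanned by $y_1,\dots,y_k$, so Plancherel applies. The left-hand side is the density of $(d_{\bm{x}})_*(f\,dz)$, and a one-dimensional change of variables in each coordinate converts it.

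\textbf{Step 1 (Fourier side).} For $\bm{s}\in\R^k$, write $\sum_j s_jy_j$ and pair it with the paraboloid point $(z,|z|^2)$. This gives
\[
\mc{E}f\Bigl(\sum_{j=1}^k s_jy_j\Bigr)=\int_{B^n(0,1)} f(z)\,e^{2\pi i\,\bm{s}\cdot p_{y_1,\dots,y_k}(z)}\,dz=\widecheck{\nu}(\bm{s}),
\]
where $\nu:=(p_{y_1,\dots,y_k})_*(f\,dz)$ is a finite complex measure on $\R^k$. By Plancherel for finite measures, $\int_{\R^k}|\mc{E}f(\sum_j s_jy_j)|^2\,d\bm{s}$ is finite if and only if $\nu$ has an $L^2$ density. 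When it does, the integral equals $\|\nu\|_{L^2(\R^k)}^2$. Both sides may be $+\infty$, and the identity is understood in $[0,\infty]$.

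\textbf{Step 2 (link to $d_{\bm{x}}$).} By \eqref{distancedotproductrelation} applied with $y=y_j$, $\pi(y_j)=x_j$, one can verify directly that
\[
y_j\cdot(z,|z|^2)=\frac{|z-x_j|^2-|x_j|^2}{4|x_j|^2}.
\]
Hence $p_{y_1,\dots,y_k}(z)\in(-1/4,\infty)^k$ and $d_{\bm{x}}(z)=\Gamma_{\bm{x}}(p_{y_1,\dots,y_k}(z))$ for every $z$. Therefore
\[
(d_{\bm{x}})_*(f\,dz)=(\Gamma_{\bm{x}})_*\nu.
\]
Here $\Gamma_{\bm{x}}$ is a diffeomorphism of $(-1/4,\infty)^k$ onto $(0,\infty)^k$ with Jacobian determinant $J_{\bm{x}}(\bm{s})$. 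Since $\bm{x}\in\textup{Aff}$, the coarea remark following Definition~\ref{def:spherical_average} shows that $(d_{\bm{x}})_*(f\,dz)$ has density $\mc{A}_{\bm{t}}f(\bm{x})$. Consequently $\nu$ is absolutely continuous with density
\[
\nu(\bm{s})=\mc{A}_{\Gamma_{\bm{x}}(\bm{s})}f(\bm{x})\,J_{\bm{x}}(\bm{s}),
\]
supported in $(-1/4,\infty)^k$.

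\textbf{Step 3 (change of variables).} We compute $\|\nu\|_2^2=\int|\mc{A}_{\Gamma_{\bm{x}}(\bm{s})}f(\bm{x})|^2J_{\bm{x}}(\bm{s})^2\,d\bm{s}$ by substituting $\bm{t}=\Gamma_{\bm{x}}(\bm{s})$, so that $d\bm{t}=J_{\bm{x}}(\bm{s})\,d\bm{s}$. This gives
\[
\int_{(0,\infty)^k}|\mc{A}_{\bm{t}}f(\bm{x})|^2\,J_{\bm{x}}\bigl(\Gamma_{\bm{x}}^{-1}(\bm{t})\bigr)\,d\bm{t}.
\]
The weight here is exactly $W(\bm{x},\bm{t})$ from \eqref{def: weightW}, which proves the lemma. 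The only point needing care is Step 1's use of Plancherel for a measure not known a priori to be in $L^2$. I would handle it by the standard fact that a finite measure has $L^2$ Fourier transform if and only if it is an $L^2$ function, with equal norms. In any case Step 2 already provides an $L^1$ density for $\nu$, so it suffices to mollify and pass to the limit by monotone convergence.
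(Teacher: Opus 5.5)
Your proposal is correct and follows essentially the same route as the paper. Both change variables $t_j=\Gamma_{x_j}(s_j)$, recognize $\mc{E}f$ on the span of $y_1,\dots,y_k$ as the Fourier transform in $\bm{s}$ of $\mc{A}_{\Gamma_{\bm{x}}(\bm{s})}f(\bm{x})J_{\bm{x}}(\bm{s})$ via the coarea formula and \eqref{distancedotproductrelation}, and apply Plancherel; your factorization $d_{\bm{x}}=\Gamma_{\bm{x}}\circ p_{y_1,\dots,y_k}$ is a tidier pushforward packaging of the paper's direct computation, and your remark that the identity holds in $[0,\infty]$ makes explicit what the paper leaves implicit. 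The only slip is harmless: the $j$-th coordinate of $p_{y_1,\dots,y_k}(z)$ equals $-1/4$ at $z=x_j$, so the range lies in $[-1/4,\infty)^k$ rather than the open orthant, but this concerns only a Lebesgue-null set of $z$.
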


\begin{remark}
Before proving this lemma, we note that for $k=1$, it follows that
 \[\int_{0}^{\infty}|\mc{A}_{t}f(x_1)|^2W(x_1,t) \, dt = \int_{\R} |\mc{E}f(s_1y_1)|^2 \, d s_1.\]
We can compare this with Liu's identity, which gives 
$$\int_{0}^{\infty} |\sigma_t*f(x_1)|^2 t^{n-1}dt=\int_{0}^{\infty} |\widehat{\sigma}_{s_1}*f(x_1)|^2{s_1}^{n-1}ds_1=\int_{0}^{\infty} |\mathcal{E}_{s_1\mathbb{S}^{n-1}}(\widecheck{f})(x_1)|^2s_1^{n-1}ds_1,$$
where $\mathcal{E}_{s_1\mathbb{S}^{n-1}}(g)(x)=\int g(w)e^{-2\pi i x\cdot w}d\tilde{\sigma}_{s_1}(w),$ for $\tilde{\sigma}_{s_1}$ the normalized surface measure in the sphere of radius $s_1$ in $\R^n$. We observe two key differences. One is that the extension operator in our identity is the extension operator associated to the truncated paraboloid $\mathbb{P}^n\subset \R^{n+1}$, while Liu's identity leads to an extension estimate for an $(n-1)$-dimensional sphere in $\R^n$. Secondly, our extension operator is acting on $f$, while in Liu's identity the extension operator acts on $\widecheck{f}$. 
\end{remark}

\begin{proof}[Proof of Lemma \ref{lemma: equalityreplacingliuidentity}]
    The idea of the proof can essentially be summarized as: perform a change of variables, then apply Plancherel, then apply the reverse change of variables as before. With $\bm{x}=(x_1,x_2, \dots ,x_k)$ fixed, we compute
\begin{align*}
    \int_{(0,\infty)^k}|\mc{A}_{\bm{t}}f(\bm{x})|^2W(\bm{x}, \bm{t}) \, d\bm{t} & = \int_{(0,\infty)^k}|\mc{A}_{\bm{t}}f(\bm{x})|^2J_{x_1}(\Gamma_{x_1}^{-1}(t_1)) \cdots J_{x_k}(\Gamma_{x_k}^{-1}(t_k)) \, d\bm{t} \\ &= \int_{(-1/4,\infty)^k}|\mc{A}_{\Gamma_{\bm{x}}(\bm{s})}f(\bm{x}) J_{\bm{x}}(\bm{s})|^2\, d\bm{s}
\end{align*}
by making the change of variables $s_j = \Gamma_{x_j}^{-1}(t_j)$.

We now take the Fourier transform in $\bm{s}$ of the function 
\[\bm{s} \mapsto \mc{A}_{\Gamma_{\bm{x}}(\bm{s})}f(\bm{x})J_{\bm{x}}(\bm{s})1_{(-1/4,\infty)^k}(\bm{s})\]
and apply Plancherel. Indeed, for $\bm{\xi}\in \R^k$ we compute 
\begin{equation}\label{eq: sphericalint}
\begin{split}
& \int_{(-1/4,\infty)^k}\mc{A}_{\Gamma_{\bm{x}}(\bm{s})}f(\bm{x})J_{\bm{x}}(\bm{s})e^{-2\pi i \bm{s} \cdot \bm{\xi}} \, d\bm{s}  \\ & = \int_{(-1/4,\infty)^k}\int_{S^{n - k}(\bm{x}, \Gamma_{\bm{x}}(\bm{s}))}f(x_{k+1})J_{\bm{x}}(\bm{s})e^{-2\pi i \bm{\xi} \cdot \bm{s}} \, d\sigma_{\bm{x},\Gamma_{\bm{x}}(\bm{s})}(x_{k+1}) \, d\bm{s}.
\end{split}
\end{equation}

Note that in the inner integral, we are integrating over $x_{k+1}$ satisfying $|x_j-x_{k+1}|=\Gamma_{x_j}(s_j)$ for all $1\leq j\leq k$. Now we invoke equality (\ref{distancedotproductrelation}) from \cite{IPPS22} to write,
\[|x_j - x_{k+1}| = \Gamma_{x_j}(s_j) \quad \Leftrightarrow \quad y_j \cdot (x_{k+1}, |x_{k+1}|^2) = s_j\]
where $y_j = \pi^{-1}(x_j)$.

So by substituting this expression for $s_j$ in (\ref{eq: sphericalint}), 
\begin{align*}
    = \int_{(-1/4,\infty)^k}\int_{S^{n - k}(\bm{x}, \Gamma_{\bm{x}}(\bm{s}))}f(x_{k+1})J_{\bm{x}}(\bm{s})e^{-2\pi i (\xi_1y_1 + \cdots + \xi_ky_k) \cdot (x_{k+1}, |x_{k+1}|^2)} \, d\sigma_{\bm{x},\Gamma_{\bm{x}}(\bm{s})}(x_{k+1}) \, d\bm{s}.
\end{align*}
Making the change of variables $t_j = \Gamma_{x_j}(s_j)$, this is equal to 
\begin{align*}
    \int_{(0,\infty)^{k}}\int_{S^{n - k}(\bm{x}, \bm{t})}f(x_{k+1})&e^{-2\pi i (\xi_1y_1 + \cdots + \xi_ky_k) \cdot (x_{k+1}, |x_{k+1}|^2)}\, d\sigma_{\bm{x},\bm{t}}(x_{k+1}) \, d\bm{t} \\
    & = \int_{\R^n}f(x_{k+1})e^{-2\pi i (x_{k+1}, |x_{k+1}|^2) \cdot (\xi_1y_1 + \cdots + \xi_ky_k)} \, dx_{k+1} \\
    & = \mc{E}f(-(\xi_1y_1 + \cdots + \xi_ky_k)).
\end{align*}

Therefore, by Plancherel's theorem in the \(\bm{s}\)-variable,
\[
\int_{(0,\infty)^k}
|\mc{A}_{\bm{t}}f(\bm{x})|^2W(\bm{x},\bm{t})\,d\bm{t}
=
\int_{\R^k}
\left|
\mc{E}f\bigl(-(\xi_1y_1+\cdots+\xi_ky_k)\bigr)
\right|^2\,d\bm{\xi}.
\]
Changing variables \(\bm\xi\mapsto-\bm\xi\) gives the desired identity.
\end{proof}

\medskip

Averaging the identity in Lemma \ref{lemma: equalityreplacingliuidentity} over the pins leads to a global weighted extension estimate, where the weight is explicitly
described as a convolution of measures supported on families of lines determined by the
pinned sets. That is precisely described in the theorem below.

\begin{theorem}\label{thm:parab_extension}
    Assume $n\geq 2$ and $1 \leq k < n$. Let $\mu_1, ..., \mu_k$ be finite compactly supported Borel measures on $\R^n\setminus\{0\}$ and let 
    $\bm{\mu} := \mu_1 \times \cdots \times \mu_k$. Suppose that $\bm{\mu}((\textup{Aff})^c) = 0$. Then for any $f\in C^{\infty}_0(B^n(0,1))$,
    \[\int \int_{(0,\infty)^k}|\mc{A}_{\bm{t}}f(\bm{x})|^2W(\bm{x}, \bm{t}) \, d\bm{t} \, d\bm{\mu}(\bm{x}) = \int_{\R^{n+1}}|\mc{E}f(y)|^2 dw(y)\]
    where $w = w_{k,\mu_1, ..., \mu_k}$ is the measure defined by the $k$-fold convolution
    \[w = w_1 * \cdots * w_k\]
    and $w_j := \Phi_*(\mu_j \times dt)$, and the map $\Phi: (\R^n \setminus \{0\}) \times (\R \setminus \{0\}) \to (\R^n \setminus \{0\}) \times (\R \setminus \{0\})$ is defined by 
\[\Phi(x, t) := t\cdot (\pi^{-1})(x)=\frac{t}{4|x|^2}(-2x, 1).\]
\end{theorem}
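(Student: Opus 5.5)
The plan is to integrate the pointwise identity of Lemma \ref{lemma: equalityreplacingliuidentity} against $d\bm{\mu}(\bm{x})$ and then recognize the right-hand side as an integral of $|\mc{E}f|^2$ against the convolution measure $w$. Since $\bm{\mu}((\textup{Aff})^c)=0$ and each $\mu_j$ lives on $\R^n\setminus\{0\}$, the lemma applies for $\bm{\mu}$-a.e. $\bm{x}$. Both sides are nonnegative and measurable in $(\bm{x},\bm{t})$, respectively $(\bm{x},\bm{s})$, so Tonelli allows integration in $\bm{\mu}$. This gives
\[
\int\int_{(0,\infty)^k}|\mc{A}_{\bm{t}}f(\bm{x})|^2W(\bm{x},\bm{t})\,d\bm{t}\,d\bm{\mu}(\bm{x})
=\int\int_{\R^k}\bigl|\mc{E}f\bigl(s_1\pi^{-1}(x_1)+\cdots+s_k\pi^{-1}(x_k)\bigr)\bigr|^2\,d\bm{s}\,d\mu_1(x_1)\cdots d\mu_k(x_k).
\]

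Next we unwind the definition of $w$. For a nonnegative Borel $g$ on $\R^{n+1}$, the definition of pushforward gives $\int g\,dw_j=\int\int_{\R} g(t\pi^{-1}(x))\,dt\,d\mu_j(x)$. The point $t=0$ has Lebesgue measure zero, so restricting $\Phi$ to $t\neq0$ is harmless. The $k$-fold convolution is then
\[
\int g\,dw=\int\cdots\int g(z_1+\cdots+z_k)\,dw_1(z_1)\cdots dw_k(z_k)
=\int\int_{\R^k} g\Bigl(\sum_j s_j\pi^{-1}(x_j)\Bigr)\,d\bm{s}\,d\bm{\mu}(\bm{x}).
\]
Here we apply the pushforward formula in each factor separately and use Tonelli. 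Taking $g=|\mc{E}f|^2$, which is continuous, bounded and nonnegative, yields exactly the right-hand side above, and the theorem follows.

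The main obstacle is making the convolution of the measures $w_j$ meaningful. These measures are infinite, since each is a pushforward of $\mu_j\times dt$ along whole lines, so the convolution is not automatically a well-defined Radon measure. We will avoid this by defining $w$ through the displayed formula, i.e.\ as the pushforward of $\mu_1\times dt\times\cdots\times\mu_k\times dt$ under $(x_j,t_j)_j\mapsto\sum_j t_j\pi^{-1}(x_j)$, as a positive Borel measure tested on nonnegative functions. That formula is all the identity requires, and Tonelli applies without any integrability assumptions.

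If local finiteness is wanted, we would check it using $\bm{\mu}(\textup{Aff}^c)=0$ and the compact supports away from $0$. For affinely independent $x_j$, the vectors $\pi^{-1}(x_j)$ are linearly independent, because they lie on the paraboloid and points there in general position span independently. Hence $\bm{s}\mapsto\sum_j s_j\pi^{-1}(x_j)$ is injective, and a bounded set has finite $\bm{s}$-preimage measure. Uniformity would require a transversality lower bound, so this part would only be remarked on and is not needed for the identity. A secondary check is the measurability of $\bm{x}\mapsto\int|\mc{A}_{\bm{t}}f(\bm{x})|^2W\,d\bm{t}$. This is immediate from the right-hand side of the lemma, which is a jointly continuous integrand integrated in $\bm{s}$.
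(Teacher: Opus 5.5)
Your proposal is correct and follows the paper's proof: you integrate the pointwise identity of the preceding lemma against $d\bm{\mu}$ and then identify the result as an integral against the convolution $w$. The only difference is that the paper routes this through $\nu_j=(\pi^{-1})_*\mu_j$ and $\Psi(y,s)=sy$ before noting $\Psi\circ(\pi^{-1}\times\mathrm{id})=\Phi$, whereas you compose with $\Phi$ directly; your definition of $w$ as the pushforward of $\prod_j(\mu_j\times dt_j)$ under $(x_j,t_j)_j\mapsto\sum_j\Phi(x_j,t_j)$, tested on nonnegative functions, is a harmless refinement that the paper leaves implicit.
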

In the applications of this theorem later, the measures \(\mu_j\) will be Frostman measures restricted to subsets of \(E\) from which the pins are chosen. After a suitable localization, the weight \(W(\bm{x},\bm{t})\) will be comparable to \(1\) on the relevant region. This description provides enough
Frostman structure in the weight to apply weighted restriction estimates, leading to
the improved threshold in Theorem~\ref{thm:main}.

\begin{remark}
    One can check that $\Phi$ is a diffeomorphism of $(\R^n \setminus \{0\}) \times (\R \setminus \{0\})$ with inverse given by
\[\Phi^{-1}(x, t) = \frac{1}{t}\left(-\frac{x}{2}, |x|^2\right).\]
See \cref{fig:phi_grid} for a visualization of the map $\Phi$. Note that $\Phi$ sends vertical lines to lines through the origin, and sends horizontal lines to parabolas. More precisely, $\Phi$ maps the vertical line $x = b$ to the line through the origin and $(-2b, 1)$, and maps the horizontal line $t = c$ to the paraboloid $t = |x|^2/c$.
\end{remark}

\begin{figure}[htbp]
\centering
\resizebox{\textwidth}{!}{%
\begin{tikzpicture}

\begin{groupplot}[
    group style={group size=2 by 1, horizontal sep=2.6cm},
    width=8.4cm,
    height=6.8cm,
    axis lines=middle,
    xlabel style={at={(ticklabel* cs:1)},anchor=west},
    ylabel style={at={(ticklabel* cs:1)},anchor=south},
    ticklabel style={font=\small},
    label style={font=\small},
    title style={font=\small},
    clip=true
]

\nextgroupplot[
    xlabel={$x$},
    ylabel={$t$},
    xmin=-2.2, xmax=2.2,
    ymin=0, ymax=4.2,
    xtick={-2,-1,0,1,2},
    ytick={0,1,2,3,4},
    minor tick num=1
]

\foreach \b in {-2,-1.5,-1,-0.5,0.5,1,1.5,2} {
    \addplot[gray!75, thin] coordinates {(\b,0) (\b,4.2)};
}

\foreach \c in {0.5,1,1.5,2,2.5,3,3.5,4} {
    \addplot[gray!75, thin, densely dashed] coordinates {(-2.2,\c) (2.2,\c)};
}

\addplot[gray!95, very thick] coordinates {(1,0) (1,4.2)};
\addplot[gray!65, very thick, densely dashed] coordinates {(-2.2,1) (2.2,1)};

\nextgroupplot[
    xlabel={$x$},
    ylabel={$t$},
    xmin=-2.4, xmax=2.4,
    ymin=0, ymax=4.2,
    xtick={-2,-1,0,1,2},
    ytick={0,1,2,3,4}
]

\foreach \b in {-2,-1.5,-1,-0.5,0.5,1,1.5,2} {
    \addplot[gray!75, thin, domain=-2.4:2.4, samples=2] {-x/(2*\b)};
}

\foreach \c in {0.5,1,1.5,2,2.5,3,3.5,4} {
    \addplot[gray!75, thin, densely dashed, domain=-2.4:2.4, samples=250] {x^2/(\c)};
}

\addplot[gray!95, very thick, domain=-2.4:2.4, samples=2] {-x/2};
\addplot[gray!65, very thick, densely dashed, domain=-2.4:2.4, samples=250] {x^2};

\end{groupplot}

\draw[-{Latex[length=3mm]}, very thick]
  ($(group c1r1.east)+(7mm,0)$) -- ($(group c2r1.west)+(-7mm,0)$)
  node[midway, above=5pt] {$\Phi$};
\end{tikzpicture}%
}
\caption{A two-dimensional schematic of the map \(\Phi\). Vertical lines are mapped to lines through the origin, while horizontal lines are mapped to parabolas.}\label{fig:phi_grid}
\end{figure}

\begin{remark}
The measure \(w_j\) appearing in \cref{thm:parab_extension} can be viewed as an extension of \(\mu_j\) to \(\R^{n+1}\). For \(y\in\R^{n+1}\setminus\{0\}\), let
\[
L_y:=\{ry:r\in\R\}
\]
be the line through the origin in the direction of \(y\). If \(E_j=\operatorname{supp}\mu_j\), then
\[
\operatorname{supp} w_j
=
\Phi(E_j\times\R)
=
\bigcup_{x\in E_j}L_{(-2x,1)}.
\]
Thus \(w_j\) is supported on a union of lines through the origin determined by \(E_j\).
\end{remark}

\begin{proof}[Proof of \cref{thm:parab_extension}]

Let $\nu_j := (\pi^{-1})_* \mu_j$ denote the pushforward of $\mu_j$ under the map $\pi^{-1}$. Then by integrating the equality in Lemma \ref{lemma: equalityreplacingliuidentity} in $\bm{x}$,
\begin{align*}
    &\int \int_{(0,\infty)^k}|\mc{A}_{\bm{t}}f(\bm{x})|^2W(\bm{x}, \bm{t}) \, d\bm{t} \, d\bm{\mu}(\bm{x}) \\ &= \int\int_{\R^k} |\mc{E}f(s_1\pi^{-1}(x_1) + \cdots + s_k\pi^{-1}(x_k))|^2 \, d\bm{s} \, d(\mu_1 \times \cdots \times \mu_k)(\bm{x}) \\
    & = \int\int_{\R^k} |\mc{E}f(s_1y_1 + \cdots + s_ky_k)|^2 \, d\bm{s} \, d(\nu_1 \times \cdots \times \nu_k)(\bm{y}) \\
    & = \int_{\R^{(n+1)k}} |\mc{E}f(z_1 + \cdots + z_k)|^2 \, d(\Psi_*(\nu_1 \times ds_1) \times \cdots \times \Psi_*(\nu_k \times ds_k))(\bm{z}) \\ & = \int_{\R^{n+1}} |\mc{E}f(y)|^2 \, d(\Psi_*(\nu_1 \times ds_1) * \cdots * \Psi_*(\nu_k \times ds_k))(y) 
\end{align*}
where $\Psi: \R^{n + 1}\times \R \to \R^{n + 1}$ is defined by 
\[\Psi(y, s) = sy.\]
Using that $\nu_j \times ds_j = (\pi^{-1} \times \text{id})_{*}(\mu_j \times ds_j)$, we can write
\[\Psi_*(\nu_j\times ds_j)=(\Psi\circ (\pi^{-1}\times \text{id}))_{*}(\mu_j\times d s_j).\]
Finally we can compute
\[\Psi \circ (\pi^{-1} \times \text{id})(x, t) = \frac{t}{4|x|^2}(-2x, 1) = \Phi(x, t)\]
and we arrive at $\Psi_*(\nu_j\times ds_j)= \Phi_*(\mu_j \times dt)=w_j$ as wanted.
\end{proof}

\section{Facts About the Weights}\label{sec:facts_weight}
In this section, we will study the weight $w$ defined in \cref{thm:parab_extension}. We will now specialize to the case where $\mu_1, ..., \mu_k$ are $\alpha$-Frostman measures satisfying the support condition 
\[\text{supp} \, \mu_j \subset \{x \in \R^n : 1/2 \leq |x| \leq 1\}\]
as well as the transversality condition: 
\begin{equation}\label{eq:transversalityassumption}
   |x_1 \wedge \cdots \wedge x_k| \gtrsim 1, \quad \text{for all } x_j \in E_j := \text{supp} \, \mu_j. 
\end{equation}

 Thus \eqref{eq:transversalityassumption} is a quantitative linear independence condition on the sets \(E_1,\dots,E_k\). When dealing with distance graphs in $E\subset \R^n$, we can usually reduce to this scenario due to Lemma \ref{lemma: transversalityreduction}. Note that the condition above implies in particular $\bm{\mu}((\text{Aff})^c) = 0$.

The measures $w_j$ obtained in this case can be seen as a $(n+1)$-dimensional extension of the $\alpha$-Frostman measures $\mu_j$. We will indeed show in particular that the measures $w_j$ and $w$ can be decomposed into a sum of $(\alpha + 1)$-Frostman measures. \par
We fix dyadic scales $R_0, R_1, R_2, \cdots$ where $R_i = 2^i$. Let $A_R$ denote the annulus 
\[A_R := \{y \in \R^{n + 1} : R / 2 \leq |y| \leq R\}\]
and let $B_0$ denote the unit ball
\[B_0 := \{y \in \R^{n + 1} : |y| \leq 1\}.\]

\begin{theorem}\label{thm:k_weight_frostman}
    Let $\mu_1, ..., \mu_k$ be $\alpha$-Frostman measures satisfying the support condition 
    \[\text{supp} \, \mu_j \subset \{x \in \R^n : 1/2 \leq |x| \leq 1\}\]
    and \cref{eq:transversalityassumption} and let $w_j$, $w$ be the measures defined in \cref{thm:parab_extension}. I.e., 
    \[w_j = \Phi_*(\mu_j \times dt), \quad w = w_1 * \cdots * w_k, \quad \Phi(x, t) = \frac{t}{4|x|^2}(-2x, 1).\]
    Let $w^{0}$ be the restriction of $w$ to $B_0$ and for each $i \geq 1$, let $w^{i}$ denote the restriction of $w$ to $A_{R_i}$ rescaled by $R_i^{\alpha - k + 1}$. I.e.
    \[w^0 := w|_{B_0}, \quad w^i := R_i^{\alpha - k + 1} w|_{A_{R_i}}.\]
    Then $w^0(B_0) \lesssim 1$ and $w^i$ is an $(\alpha + 1)$-Frostman measure on $B^{n+1}(0, R_i)$. In other words, we have a decomposition of $w$ into a sum of $(\alpha+1)$-Frostman measures:
    \[w = w^0 + \sum_{i = 1}^{\infty}R_i^{-\alpha + k - 1}w^i.\]
\end{theorem}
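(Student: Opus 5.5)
The plan is to write $w$ as a single pushforward and reduce the Frostman bound to a covering estimate for one Frostman measure near a $(k-1)$-plane. Set $y(x):=\pi^{-1}(x)=\frac{1}{4|x|^2}(-2x,1)$. Unwinding the definitions in \cref{thm:parab_extension},
\[
\int g\,dw=\int\!\!\int_{\R^k} g\Bigl(\textstyle\sum_{j=1}^k t_j\,y(x_j)\Bigr)\,d\bm t\,d\bm\mu(\bm x).
\]
For $1/2\le |x|\le 1$ we have $|y(x)|\sim 1$ and $y(x)\cdot e_{n+1}\ge 1/4$.

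The vectors $(-2x_j,1)$ are independent once the $x_j$ are, and \cref{eq:transversalityassumption} makes this quantitative: $|y(x_1)\wedge\cdots\wedge y(x_k)|\gtrsim 1$ uniformly on $E_1\times\cdots\times E_k$. Hence for each fixed $\bm x$ the linear map $L_{\bm x}\colon\bm t\mapsto\sum_j t_j y(x_j)$ is injective with $|L_{\bm x}\bm t|\sim|\bm t|$, uniformly in $\bm x$. This gives the first claim at once: $w(B_0)\le \int |\{\bm t: |L_{\bm x}\bm t|\le 1\}|\,d\bm\mu\lesssim 1$.

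For the Frostman property, fix $i\ge1$, set $R=R_i$, and take a ball $B(z,r)$ meeting $A_R$. If $r\gtrsim R$, then
\[
w(B(z,r))\le w(\{|y|\lesssim R\})\lesssim R^k,
\]
and $R^{-(\alpha-k+1)}r^{\alpha+1}\gtrsim R^k$, so this case is trivial. So assume $r\ll R$, which forces $|z|\sim R$. For fixed $\bm x$, the set $\{\bm t: L_{\bm x}\bm t\in B(z,r)\}$ has $\bm t$-volume $\lesssim r^k$. It is empty unless $\operatorname{dist}(z,V_{\bm x})\le r$, where $V_{\bm x}=\operatorname{span}\{y(x_j)\}$. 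Therefore
\[
w(B(z,r))\lesssim r^k\,\bm\mu\bigl(\{\bm x:\operatorname{dist}(z,V_{\bm x})\le r\}\bigr).
\]
The goal is then to show $\bm\mu(\{\bm x:\operatorname{dist}(z,V_{\bm x})\le r\})\lesssim \delta^{\alpha-k+1}$ with $\delta:=r/R$, since $r^k\delta^{\alpha-k+1}=R^{-(\alpha-k+1)}r^{\alpha+1}$.

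To prove this, write $z'=\sum_j t_jy(x_j)$ for a point within $r$ of $z$. Because $|\bm t|\sim R$, some coordinate satisfies $|t_j|\gtrsim R$. Taking a union over the $k$ possible indices $j$, it suffices to fix $j$ and all $x_i$ with $i\neq j$, and to bound the $\mu_j$-measure of
\[
\{x_j:\ y(x_j)\ \text{lies within } O(\delta)\ \text{of } U\},\qquad U:=\operatorname{span}\{z,\,y(x_i):i\neq j\},
\]
where $\dim U\le k$. This holds because $y(x_j)=t_j^{-1}(z'-\sum_{i\ne j}t_iy(x_i))$ and $|z'-z|\le r$.

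The condition on $x_j$ is projective, so I would pass to the affine chart $\{\text{last coordinate}=1\}$. There the direction of $y(x)$ is $(-2x,1)$, and $U\cap\{\text{last coord}=1\}=\{(-2x,1):x\in P\}$ for an affine plane $P\subset\R^n$ of dimension $\le k-1$. The chart is well-conditioned here: both $y(x_j)$ and the vectors $y(x_i)\in U$ make angle bounded away from $\pi/2$ with $e_{n+1}$, so $U$ meets the chart transversally with uniform constants. Consequently $\delta$-closeness of the direction of $y(x_j)$ to $U$ implies $x_j\in N_{C\delta}(P)\cap B(0,1)$. This neighborhood is covered by $\lesssim\delta^{-(k-1)}$ balls of radius $\delta$, so the $\alpha$-Frostman property gives
\[
\mu_j\bigl(N_{C\delta}(P)\cap B(0,1)\bigr)\lesssim \delta^{\alpha-(k-1)}.
\]
Integrating over the remaining $\mu_i$, which have bounded mass, yields the required estimate.

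I expect the main obstacle to be making this projective step rigorous and uniform. One has to show that a $\delta$-small distance between $y(x_j)$ and $U$ translates into $x_j$ being $O(\delta)$-close to $P$ with constants independent of $z$ and of the fixed pins. I would handle this directly using the lower bound on the last coordinate, $y\cdot e_{n+1}\ge 1/4$, together with $|y|\lesssim 1$. The case $\dim U<k$, which occurs when $z$ nearly lies in $\operatorname{span}\{y(x_i):i\ne j\}$, only makes $P$ smaller and the bound better. Summing over dyadic annuli then gives the stated decomposition $w=w^0+\sum_{i\ge1}R_i^{-\alpha+k-1}w^i$.
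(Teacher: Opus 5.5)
Your proof is correct, but it distributes the loss differently from the paper. The paper never writes $w$ as a single pushforward. It uses $|y_1|+\cdots+|y_k|\lesssim |y_1+\cdots+y_k|$ to find a summand with $|y_\ell|\gtrsim R$, and freezes the other summands $y_i=t_i\,y(x_i)$, both pin \emph{and} line parameter. It then applies the one-measure bound $w_\ell\bigl(B(\cdot,r)\cap\{|z|\sim R\}\bigr)\lesssim R^{-\alpha}r^{\alpha+1}$ from Lemma~\ref{lem:1_weight_frostman}, where balls pulled back by $\Phi^{-1}$ become $(r/R)^{n}\times r$ tubes. Finally it pays the trivial mass $w_i(B(0,CR))\lesssim R$ for each of the other $k-1$ factors, for a total of $r\,(r/R)^{\alpha}R^{k-1}$.

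You instead integrate out all $k$ line parameters at once, paying only $r^k$ thanks to the uniform injectivity of $L_{\bm x}$. The whole loss then falls on the pins: $V_{\bm x}$ must pass within $r$ of $z$, which confines one pin to a $C\delta$-neighborhood of a $(k-1)$-plane. This gives $r^k(r/R)^{\alpha-k+1}$, which is the same number.

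What each route buys:
\begin{itemize}
\item The paper's route is more modular: the $k$-fold case reduces to the $k=1$ lemma plus trivial mass bounds, with no plane geometry.
\item Your route bypasses the Lipschitz/tube lemma. It isolates an essentially sharp intermediate quantity, $\bm\mu(\{\bm x:\operatorname{dist}(z,V_{\bm x})\le r\})$. This shows that the only nontrivial input is the Frostman bound for one $\mu_j$ near a $(k-1)$-plane, and hence where non-concentration of $\mu$ near such planes could improve the exponent.
\end{itemize}

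The step you flag as the main obstacle is in fact easy. If $u\in U$ is within $C\delta$ of $(-2x_j,1)$, then $u_{n+1}\ge 1/2$ for $\delta$ small. Then $u/u_{n+1}\in U\cap\{\text{last coordinate}=1\}$ is still within $O(\delta)$ of $(-2x_j,1)$, so no conditioning of $U$ against the chart is needed.

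Two details should be written out:
\begin{itemize}
\item The uniform bound $|y(x_1)\wedge\cdots\wedge y(x_k)|\gtrsim 1$ holds because orthogonal projection onto the first $n$ coordinates does not increase $k$-volume. Hence $|(-2x_1,1)\wedge\cdots\wedge(-2x_k,1)|\ge 2^k|x_1\wedge\cdots\wedge x_k|$.
\item In the case $r\gtrsim R$ you should bound $w(B(z,r)\cap A_R)$ rather than $w(B(z,r))$, since $w^i$ is a restriction of $w$.
\end{itemize}
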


Before proving the theorem above, we will need a few lemmas. For the remainder of this section, we will fix $\mu_1, \cdots, \mu_k$ and $w, w^i$ as in the statement of the theorem. We also define for each $1\leq j\leq k$ the measures $w_{j}^i$ from $w_j$ analogously to $w^i$:
\begin{equation}\label{eq:piecesofwj}
  w_{j}^0 := w_j|_{B_0}, \quad w_{j}^i := R_i^{\alpha} w_j|_{A_{R_i}}.  
\end{equation}

Therefore for each $j$, we have a decomposition
\[w_j = w_j^0 + \sum_{i = 1}^{\infty}R_i^{-\alpha}w_{j}^i.\]
We start by stating a simple lemma about the support of the measures $w_j$. Observe that
\[\Phi^{-1}(x, t) = \frac{1}{t}\left(-\frac{x}{2}, |x|^2\right).\]
This transformation is singular when $t \gg |x|$, but the next lemma says we don't need to worry about this region.

\begin{lemma}
   Let $S_j := \text{supp} \, w_j$. Then $S_j \subset \{y = (x, t) \in \R^{n + 1} : |x| \approx |t|\}$.
\end{lemma}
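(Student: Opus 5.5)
The plan is to compute the support of $w_j=\Phi_*(\mu_j\times dt)$ directly from the formula for $\Phi$. By definition of pushforward, the support of $w_j$ is contained in the closure of $\Phi(E_j\times\R)$, where $E_j=\operatorname{supp}\mu_j\subset\{1/2\leq|x|\leq 1\}$. (Strictly, $\Phi$ is defined for $t\neq 0$, but $\{t=0\}$ is $dt$-null, so only its image for $t\neq0$ matters, together with the limit point $0$.) So it suffices to show that every point of $\Phi(E_j\times(\R\setminus\{0\}))$ lies in a set of the form $\{(x',t'): c|t'|\leq |x'|\leq C|t'|\}$. This set is closed, since it is a union of two closed cones together with the origin, so taking the closure preserves the containment.

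Write a point of the image as $(x',t')=\Phi(x,t)=\frac{t}{4|x|^2}(-2x,1)$ with $x\in E_j$. Then $x'=-\frac{t}{2|x|^2}x$ and $t'=\frac{t}{4|x|^2}$. Hence $|x'|=\frac{|t|}{2|x|}$ and $|t'|=\frac{|t|}{4|x|^2}$, so the ratio is
\[\frac{|x'|}{|t'|}=2|x|\in[1,2].\]
Equivalently, $\Phi(x,\cdot)$ parametrizes the line $L_{(-2x,1)}$, whose direction makes a controlled angle with the vertical axis because $|x|\approx1$.

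Therefore $S_j\subset\{(x',t'):|t'|\leq|x'|\leq 2|t'|\}$, which is the claimed $|x'|\approx|t'|$ with absolute constants. There is no real obstacle here. The only points needing care are the closure, which is handled by the closed cone, and noting that the bound uses only the lower and upper bounds $1/2\leq|x|\leq1$ on the support of $\mu_j$, not transversality.
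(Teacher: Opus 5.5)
Your proposal is correct and follows the paper's proof: both push the support $\{1/2\le|x|\le1\}\times\R$ forward through $\Phi$ and compute $|x'|/|t'|=2|x|\in[1,2]$. Your extra remark about taking the closure is a point of rigor the paper leaves implicit: the target set is closed, and the slice $\{t=0\}$ is $dt$-null.
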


\begin{proof}
    The support of $\mu_j \times dt$ is contained in 
    \[\{x \in \R^n : 1/2 \leq |x| \leq 1\} \times \R.\]
    Therefore, the support of $w_j$ is contained in 
    \[\Phi\left(\{x \in \R^n : 1/2 \leq |x| \leq 1\} \times \R\right).\]
    Since $(x', t') := \Phi(x, t) = \frac{t}{4|x|^2}(-2x, 1)$ and 
    \[|x'| = \left|\frac{t}{4|x|^2}(-2x)\right| = \frac{|t|}{2|x|} \approx \frac{|t|}{4|x|^2} = |t'|,\]
    we have that $S_j \subset \{y = (x, t) \in \R^{n + 1} : |x| \approx |t|\}$ as wanted.
\end{proof}
The next two lemmas show that each $w_j^i$ is an $(\alpha + 1)$-Frostman measure on $A_{R_i}$. We define
\[\Phi^{-1}(x, t) = \frac{1}{t}\left(-\frac{x}{2}, |x|^2\right) =: (\varphi_1(x, t), \varphi_2(x, t)).\]
\begin{lemma}\label{lem:map_lipschitz}
    For $R \geq 1$, $\varphi_1|_{S_j \cap A_R}$ is Lipschitz with constant approximately $R^{-1}$ and $\varphi_2|_{S_j \cap A_R}$ is Lipschitz with constant approximately $1$. I.e. for $y,y' \in S_j \cap A_R$, we have
    \[|\varphi_1(y) - \varphi_1(y')| \lesssim R^{-1}|y - y'|, \quad |\varphi_2(y) - \varphi_2(y')| \lesssim |y - y'|.\]
\end{lemma}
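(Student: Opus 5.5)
On $S_j\cap A_R$ we have $|x|\approx|t|\approx R$; this is the content of the preceding lemma, since $|y|\approx R$ on $A_R$. The plan is to bound the gradients of $\varphi_1,\varphi_2$ on a convex neighborhood of this region and then apply the mean value theorem.

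\textbf{Step 1: the gradients.} Write $\varphi_1(x,t)=-x/(2t)$ and $\varphi_2(x,t)=|x|^2/t$. Then
\[
\partial_x\varphi_1=-\tfrac{1}{2t}I,\qquad \partial_t\varphi_1=\tfrac{x}{2t^2},\qquad \nabla_x\varphi_2=\tfrac{2x}{t},\qquad \partial_t\varphi_2=-\tfrac{|x|^2}{t^2}.
\]
On the region $U_R:=\{(x,t): |x|\approx|t|\approx R\}$, with comparability constants slightly worse than those on $S_j$, these give
\[
|\nabla\varphi_1|\lesssim R^{-1}+R\cdot R^{-2}\approx R^{-1},\qquad |\nabla\varphi_2|\lesssim 1+R^2\cdot R^{-2}\approx 1 .
\]

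\textbf{Step 2: the geometric obstacle.} The difficulty is that $S_j\cap A_R$ is not convex, and the segment between $y$ and $y'$ may leave $U_R$. Worse, $t$ has a sign: $S_j$ has two components, one in $t>0$ and one in $t<0$, and $\varphi$ is singular at $t=0$. Split into two cases.

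\emph{Case (a): $|y-y'|\le cR$ for a small constant $c$.} Then the whole segment $[y,y']$ stays within distance $cR$ of $y$. Since $|x|\approx|t|\approx R$ at $y$, the segment remains in $U_R$, and in particular $t$ keeps the sign it has at $y$. The mean value theorem with the Step 1 bounds then gives both estimates.

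\emph{Case (b): $|y-y'|>cR$.} Use the crude sup bounds on $S_j\cap A_R$: $|\varphi_1|=|x|/(2|t|)\lesssim 1$ and $|\varphi_2|=|x|^2/|t|\lesssim R$. Hence
\[
|\varphi_1(y)-\varphi_1(y')|\lesssim 1\lesssim R^{-1}|y-y'|,\qquad |\varphi_2(y)-\varphi_2(y')|\lesssim R\lesssim|y-y'| .
\]

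The claimed "approximately" lower bounds on the Lipschitz constants are not needed for the Frostman application; if desired, they follow by testing on pairs along a line through the origin.
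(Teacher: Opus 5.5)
Your proof is correct. The paper omits this proof as "a straightforward computation," and your gradient bounds, applied through the mean value inequality for nearby pairs and replaced by the sup bounds $|\varphi_1|\lesssim 1$, $|\varphi_2|\lesssim R$ for far-apart pairs, supply that computation, including the non-convexity and sign-of-$t$ issues the paper passes over. A slightly shorter route writes $y=(x,t)$, $y'=(x',t')$ and uses the identities $\varphi_1(y)-\varphi_1(y')=\frac{-(x-x')t'+x'(t-t')}{2tt'}$ and $\varphi_2(y)-\varphi_2(y')=\frac{(|x|^2-|x'|^2)t'+|x'|^2(t'-t)}{tt'}$. Together with $|t|,|t'|\approx R$ and $|x|,|x'|\lesssim R$, these give both bounds directly, with no case split.
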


We omit the proof of this lemma since it is a straightforward computation. 

\begin{lemma}\label{lem:1_weight_frostman}
    For all $R \geq 1$, we have that $w_j(B^{n+1}(0, R)) \lesssim R$ and for each $i \geq 1$, the weight $w_j^i$ defined in equation (\ref{eq:piecesofwj}) is an $(\alpha + 1)$-Frostman measure on $B^{n+1}(0, R_i)$. In other words, we have a decomposition of $w_j$ into a sum of Frostman measures:
    \[w_j = w_j^0 + \sum_{i = 1}^{\infty}R_i^{-\alpha }w_j^i.\]
\end{lemma}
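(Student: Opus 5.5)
The plan is to compute everything by pulling back along $\Phi$, using the definition $w_j(A)=(\mu_j\times dt)(\Phi^{-1}(A))$.

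\emph{Total mass bound.} For $y=\Phi(x,t)$ with $1/2\le|x|\le1$ we have
\[
|y|=\frac{|t|}{4|x|^2}\,|(-2x,1)|\approx |t|.
\]
Hence $\Phi^{-1}(B^{n+1}(0,R))\cap \operatorname{supp}(\mu_j\times dt)\subset E_j\times\{|t|\lesssim R\}$. This gives $w_j(B^{n+1}(0,R))\lesssim \mu_j(E_j)\cdot R\lesssim R$. In particular $w_j^0(B_0)\lesssim1$ and $w_j(A_{R_i})\lesssim R_i$. The decomposition $w_j=w_j^0+\sum_i R_i^{-\alpha}w_j^i$ then holds trivially, since the annuli $A_{R_i}$ together with $B_0$ cover $\R^{n+1}$ (boundary overlaps have measure zero or are harmless).

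\emph{Frostman bound for $w_j^i$.} Fix $i\ge1$, $R=R_i$, a point $y_0$ and a radius $r>0$. We need
\[
w_j\bigl(B(y_0,r)\cap A_R\bigr)\lesssim R^{-\alpha}r^{\alpha+1}.
\]
For $r\gtrsim R$ this follows from the mass bound: $R\le R^{-\alpha}r^{\alpha+1}$ when $r\ge R$, and for $r\approx R$ we adjust constants. So assume $r\le cR$. Any $(x,t)$ with $\Phi(x,t)\in B(y_0,r)\cap A_R\cap S_j$ satisfies $\Phi^{-1}(\Phi(x,t))=(x/t\cdot\ldots)$; more precisely we use $\Phi^{-1}=(\varphi_1,\varphi_2)$ and recover
\[
x=\frac{-\varphi_1}{2|\varphi_1|^2\cdot(\ldots)}.
\]
It is cleaner to use Lemma~\ref{lem:map_lipschitz} directly.

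Two points $y,y'\in S_j\cap A_R\cap B(y_0,r)$ correspond to $(x,t),(x',t')$. A short computation from $\Phi^{-1}(y)=(x/(2\cdot\ldots))$ shows that $\varphi_1(y)=-\tfrac{y_x}{2y_t}$ equals, up to a smooth diffeomorphism of the annulus $\{1/2\le|x|\le1\}$, the point $x$ itself: indeed $\varphi_1(\Phi(x,t))=x/(\ldots)$, namely $-\bar y/(2y_{n+1})=x$ exactly, since $\bar y=-2xt/(4|x|^2)$ and $y_{n+1}=t/(4|x|^2)$. Similarly $\varphi_2(\Phi(x,t))=|\bar y|^2/y_{n+1}$, which is $t$ up to sign and normalisation; check: $|\bar y|^2/y_{n+1}=t$. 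So $\Phi^{-1}$ is literally $(\varphi_1,\varphi_2)$ recovering $(x,t)$. By Lemma~\ref{lem:map_lipschitz}, the preimage of $B(y_0,r)\cap A_R\cap S_j$ lies in $B^n(x_*,CR^{-1}r)\times[t_*-Cr,t_*+Cr]$. Therefore
\[
w_j\bigl(B(y_0,r)\cap A_R\bigr)\le \mu_j\bigl(B(x_*,CR^{-1}r)\bigr)\cdot 2Cr\lesssim (r/R)^{\alpha}\,r,
\]
and multiplying by $R^{\alpha}$ gives the $(\alpha+1)$-Frostman bound for $w_j^i$, supported in $B^{n+1}(0,R_i)$.

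\emph{Main obstacle.} The only delicate point is the Lipschitz control in Lemma~\ref{lem:map_lipschitz}. It requires restricting to $S_j$, where $|\bar y|\approx|y_{n+1}|\approx R$, so the denominator $y_{n+1}$ is bounded below by $\approx R$. The preimage of a ball may, however, involve points not both in $S_j$; this is handled by taking $y,y'$ in the support only, which suffices since the measure lives there. I would also double-check the scale case split $r\lesssim R$ versus $r\gtrsim R$, but this is routine.
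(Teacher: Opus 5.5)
Your proposal is correct and follows essentially the same route as the paper: the mass bound comes from $|\Phi(x,t)|\approx|t|$ on $E_j\times\R$, the case $r\gtrsim R_i$ follows from that mass bound, and the case $r\le cR_i$ uses Lemma~\ref{lem:map_lipschitz} to place the preimage in an $(r/R_i)\times\cdots\times(r/R_i)\times r$ tube whose $\mu_j\times dt$-measure is $\lesssim (r/R_i)^{\alpha}r$. Your explicit check that $\Phi^{-1}=(\varphi_1,\varphi_2)$ recovers $(x,t)$ exactly, and your remark on the annulus boundary overlaps (which are indeed $w_j$-null), are minor points the paper leaves implicit; in a final write-up, remove the exploratory fragments containing ``$(\ldots)$''.
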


\begin{proof}
    For the first assertion, note that for $y = (x,t) \in S_j$, we have that 
    \[|\Phi^{-1}(x, t)| \lesssim 1 + |x|.\]
    Therefore 
    \[w_j(B^{n+1}(0, R)) \leq (\mu_j \times dt)(B^{n+1}(0, CR)) \lesssim R\]
    which is the first assertion. For the second assertion, since \(w_j^i\) is supported on \(A_{R_i}\), it suffices to estimate
\(w_j^i(B(y,r))\) for balls \(B(y,r)\) with \(y\in A_{R_i}\). Moreover, if \(r\gtrsim R_i\), then
\[
w_j^i(B(y,r))
\leq R_i^\alpha w_j(B(0,CR_i))
\lesssim R_i^{\alpha+1}
\lesssim r^{\alpha+1}.
\]
Thus we may assume \(r\leq cR_i\).

    By Lemma \ref{lem:map_lipschitz} $\Phi^{-1}(B^{n+1}(y, r)\cap A_{R_i})$ is a tube  $T$ of dimensions approximately 
    \[(r / R_i) \times \cdots \times (r / R_i) \times r\] 
    pointing in the direction of the $t$-axis. Therefore
    \[w_j(B^{n+1}(y, r)\cap A_{R_i}) \leq (\mu_j \times dt)(T) \lesssim (r/R_i)^{\alpha}r = R_i^{-\alpha}r^{\alpha + 1}.\]
    Therefore
    \[
w_j^i(B^{n+1}(y,r))
=
R_i^{\alpha}w_j(B^{n+1}(y,r)\cap A_{R_i})
\lesssim r^{\alpha+1},
\]
    as wanted.
\end{proof}

It follows from the hypothesis on $E_j$ that they are separated and linearly independent in the sense that if $x_j \in E_j$ for $1 \leq j \leq k$, then $\{x_j\}_{j=1}^{k}$ is linearly independent. Recall that $S_j=\text{supp} (w_j)\subset \Phi(E_j\times \R)$ for 
\[\Phi(x, t) = \frac{t}{4|x|^2}(-2x, 1).\]
Define $C_j=\rho(S_j\setminus\{0\})$, where $\rho$ denotes the radial projection map defined by 
\[\rho: \R^{n+1} \setminus \{0\} \to \mathbb{S}^n, \quad \rho(y) = \frac{y}{|y|}.\] 
We claim that the sets $\{C_j\}_{j=1}^{k}$ are linearly independent. Indeed, every \(z_j\in C_j\) is of the form
\[
z_j=\pm \frac{(-2x_j,1)}{\sqrt{4|x_j|^2+1}},
\qquad x_j\in E_j.
\]
Thus any nontrivial linear relation among \(z_1,\dots,z_k\) would, by looking at the first \(n\) coordinates, give a nontrivial linear relation among \(x_1,\dots,x_k\), contradicting the wedge transversality assumption.

\par
Since $w$ is a convolution of the $w_j$, it is important to establish some transversality properties of the supports of the $w_j$.

\begin{lemma}\label{lem:weight_support}
    For $y_j \in S_j=\textup{supp}\, w_j$, we have \[|y_1| + \cdots + |y_k| \lesssim |y_1 + \cdots + y_k|.\]
\end{lemma}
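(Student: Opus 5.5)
The plan is to reduce the claim to a uniform lower bound on the smallest singular value of the matrix built from unit vectors in the sets \(C_j=\rho(S_j\setminus\{0\})\), and then to get that bound from compactness. Write each nonzero \(y_j\in S_j\) as \(y_j=|y_j|z_j\) with \(z_j\in C_j\); zero terms can simply be dropped. Then
\[
y_1+\cdots+y_k=M(z_1,\dots,z_k)\,a,
\qquad a=(|y_1|,\dots,|y_k|)\in\R^k,
\]
where \(M(z_1,\dots,z_k)\) is the \((n+1)\times k\) matrix with columns \(z_j\). Suppose we have
\[
|Ma|\geq c|a|\quad\text{for all } a\in\R^k,
\]
with \(c>0\) uniform over \(z_j\in C_j\). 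Then \(|y_1+\cdots+y_k|\geq c|a|\geq c k^{-1/2}(|y_1|+\cdots+|y_k|)\), which is the claim.

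To get the uniform constant, I will show that the sets \(C_j\) are compact and that every choice \(z_j\in C_j\) gives linearly independent vectors. Each \(z_j\) has the form \(\pm(-2x_j,1)/\sqrt{4|x_j|^2+1}\) with \(x_j\in E_j\). So \(C_j\) is the image of the compact set \(E_j\times\{\pm1\}\) under a continuous map, and is therefore compact. Linear independence is the claim proved just before the lemma: a relation among the \(z_j\), restricted to the first \(n\) coordinates, gives a relation among the \(x_j\), which contradicts \eqref{eq:transversalityassumption}.

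Now consider the function \((z_1,\dots,z_k)\mapsto\sigma_{\min}(M(z_1,\dots,z_k))\), which is the square root of the smallest eigenvalue of \(M^TM\). It is continuous and strictly positive on the compact set \(C_1\times\cdots\times C_k\), so it is bounded below by some \(c>0\). For a quantitative constant one can instead use
\[
|z_1\wedge\cdots\wedge z_k|\geq \frac{|2x_1\wedge\cdots\wedge 2x_k|}{\prod_j\sqrt{4|x_j|^2+1}}\gtrsim 1,
\]
which holds because projection onto the first \(n\) coordinates does not increase \(k\)-volume and \(|x_j|\leq 1\). Combined with \(|z_j|=1\), this bounds \(\sigma_{\min}\) from below, since the product of the singular values equals the wedge norm and each singular value is at most \(\sqrt{k}\).

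The only delicate point is the sign ambiguity \(\pm\): the \(y_j\) may point in either direction along their lines, so the coefficients in \(y_1+\cdots+y_k\) can have either sign. I handle this by putting the sign into \(z_j\), so that \(C_j\) is symmetric and \(a\) has nonnegative entries. In any case the singular-value bound does not depend on the signs of the entries of \(a\), so this causes no difficulty.
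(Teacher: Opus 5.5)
Your proof is correct and follows essentially the same route as the paper. Both write $y_j=|y_j|z_j$ with $z_j\in C_j$, then use compactness of $C_1\times\cdots\times C_k$ together with linear independence of the $z_j$ to get a uniform lower bound on $|\sum_j a_j z_j|$. The paper minimizes directly over the $\ell^1$ unit sphere $A$, while you minimize through $\sigma_{\min}$, i.e.\ over the $\ell^2$ sphere, and then convert with a factor $k^{-1/2}$. You also check explicitly that $C_j$ is compact, which the paper leaves implicit, and you give an explicit constant via the wedge-norm bound.
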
 

\begin{proof}
    Let 
    \[A := \left\{a \in \R^{k} : \sum_{j = 1}^{k}|a_j| = 1\right\}.\]
    Since for any $z_j \in C_j$, the $\{z_j\}$ are linearly independent, the map 
    \[C_1 \times \cdots \times C_k \times A \to \R^{n+1}, \qquad (z_1, \ldots, z_k, a) \mapsto a_1z_1 + \cdots + a_kz_k\]
    is never zero. Thus by compactness, we have that 
    \[|a_1z_1 + \cdots + a_kz_k| \gtrsim 1, \quad \text{ for all } (z_1, \ldots, z_k, a) \in C_1 \times \cdots \times C_k \times A.\] 

    If \(y_1=\cdots=y_k=0\), there is nothing to prove. Otherwise, set
    \[
    a_j := \frac{|y_j|}{|y_1|+\cdots+|y_k|}.
    \]
    For each \(j\) with \(y_j\neq0\), set \(z_j:=y_j/|y_j|\in C_j\). For indices with \(y_j=0\), choose an arbitrary \(z_j\in C_j\). Then \(a\in A\), and
    \[
    y_1+\cdots+y_k
    =
    (|y_1|+\cdots+|y_k|)
    (a_1z_1+\cdots+a_kz_k).
    \]
    Therefore
    \[
    |y_1+\cdots+y_k|
    \gtrsim
    |y_1|+\cdots+|y_k|,
    \]
    as desired.
\end{proof}
We are now ready to prove \cref{thm:k_weight_frostman}.

  \begin{proof}[Proof of Theorem \ref{thm:k_weight_frostman}]
The case \(k=1\) follows directly from Lemma~\ref{lem:1_weight_frostman}, so assume \(k\geq 2\).

We first prove the estimate on \(B_0\). If \(y_j\in S_j\) and
\(y_1+\cdots+y_k\in B_0\), then Lemma~\ref{lem:weight_support} gives
\[
|y_1|+\cdots+|y_k|\lesssim |y_1+\cdots+y_k|\lesssim 1.
\]
Hence \(|y_j|\lesssim 1\) for every \(j\), and Lemma~\ref{lem:1_weight_frostman} gives
\[
w(B_0)\leq \prod_{j=1}^k w_j(B(0,C))\lesssim 1.
\]
Thus \(w^0(B_0)\lesssim 1\).

Now fix \(i\geq 1\), write \(R=R_i\), and let \(B=B^{n+1}(y_0,r)\) be an arbitrary ball. Since \(w^i\) is supported on \(A_R\), it suffices to estimate \(w(B\cap A_R)\). If \(r\gtrsim R\), then
\[
w^i(B)
\leq R^{\alpha-k+1}w(B(0,CR))
\leq R^{\alpha-k+1}\prod_{j=1}^k w_j(B(0,CR))
\lesssim R^{\alpha+1}\lesssim r^{\alpha+1}.
\]
Thus we may assume \(r\leq cR\).

Suppose \(y_j\in S_j\) and \(y_1+\cdots+y_k\in B\cap A_R\). By Lemma~\ref{lem:weight_support},
\[
|y_1|+\cdots+|y_k|
\lesssim |y_1+\cdots+y_k|
\lesssim R,
\]
while the reverse inequality
\[
R\lesssim |y_1+\cdots+y_k|\leq |y_1|+\cdots+|y_k|
\]
is immediate from \(y_1+\cdots+y_k\in A_R\). Hence
\[
|y_1|+\cdots+|y_k|\sim R.
\]
In particular, at least one \(y_\ell\) satisfies \(|y_\ell|\gtrsim R\). Therefore, we can decompose
\begin{align*}
    w(B\cap A_R) & = \int 1_{\{y_1 + \cdots + y_k \in B \cap A_R\}}(y_1, ..., y_k) \, dw_1(y_1) \cdots dw_k(y_k) \\
    & \leq \sum_{l = 1}^{k}\int 1_{\{y_1 + \cdots + y_k \in B \cap A_R, |y_l| \gtrsim R\}}(y_1, ..., y_k) \, dw_1(y_1) \cdots dw_k(y_k)
\end{align*}
By symmetry, it suffices to estimate the contribution of the piece where \(|y_1|\gtrsim R\), up to a harmless factor of \(k\). On this piece, also \(|y_j|\lesssim R\) for every \(j\). Therefore
\[
w(B\cap A_R)
\lesssim
\int_{\{|y_2|,\dots,|y_k|\lesssim R\}}
w_1\bigl((B-(y_2+\cdots+y_k))\cap\{z:|z|\sim R\}\bigr)
\,dw_2(y_2)\cdots dw_k(y_k).
\]
For fixed \(y_2,\dots,y_k\), the set inside \(w_1\) is contained in a ball of radius \(r\), intersected with \(O(1)\) dyadic annuli of radii comparable to \(R\). By Lemma~\ref{lem:1_weight_frostman},
\[
w_1\bigl((B-(y_2+\cdots+y_k))\cap\{z:|z|\sim R\}\bigr)
\lesssim R^{-\alpha}r^{\alpha+1}.
\]
Using \(w_j(B(0,CR))\lesssim R\) for \(2\leq j\leq k\), we obtain
\[
w(B\cap A_R)
\lesssim
R^{-\alpha}r^{\alpha+1}R^{k-1}
=
R^{-\alpha+k-1}r^{\alpha+1}.
\]
Multiplying by \(R^{\alpha-k+1}\), we get
\[
w^i(B)\lesssim r^{\alpha+1}.
\]
Thus \(w^i\) is an \((\alpha+1)\)-Frostman measure.
\end{proof}

\section{Weighted Fourier Extension Estimates}\label{ref:ProofofThmA}

In this section, we prove \cref{thm:main}. To summarize, we use the results of the previous section to reduce the proof of \cref{thm:main} to a local weighted Fourier extension estimate involving the weight $w$. We can then apply known weighted restriction estimates to prove \cref{thm:main}. \par
For any $R > 0$, let $\mc{P}_{\leq R}$ and $\mc{P}_{\geq R}$ denote the Littlewood-Paley projections onto frequencies $\{|\xi| \leq R\}$ and $\{|\xi| \geq R\}$ respectively. We start by stating an elementary lemma.
\begin{lemma}\label{lem:stationary_phase}
    Let $R \geq 1$ and $f$ be a function with support in the unit ball of $\R^n$. Then for any $s, N > 0$
    \[\sup_{|y| \leq R} |\mc{E}(\mc{P}_{\geq 4R}f)(y)| \lesssim_{s, N} R^{-N}\|f\|_{H^{-s}}.\]
\end{lemma}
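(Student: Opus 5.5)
The approach is to write $\mathcal{E}(\mathcal{P}_{\geq 4R} f)(y)$ as an oscillatory integral in the frequency variable of $f$ and integrate by parts. The phase has gradient of size about $|\xi|$, while the parameter $y$ only contributes gradient of size about $R$.

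Fix a cutoff $\chi\in C_0^\infty(B^n(0,2))$ with $\chi\equiv 1$ on $B^n(0,1)$. Since $\mathcal{E}$ integrates over $B^n(0,1)$, we have $\mathcal{E}g(y)=\int g(z)\chi(z)e^{2\pi i(x\cdot z+t|z|^2)}\,dz$ for any $g$ (here $y=(x,t)$). Applying this with $g=\mathcal{P}_{\geq 4R}f$ and using Parseval in $z$ gives
\[
\mathcal{E}(\mathcal{P}_{\geq 4R}f)(y)=\int_{|\xi|\geq 4R}\widehat f(\xi)\,\overline{K_y(\xi)}\,d\xi,\qquad K_y(\xi):=\int \chi(z)e^{-2\pi i(x\cdot z+t|z|^2)}e^{2\pi i z\cdot\xi}\,dz,
\]
up to harmless conjugation and sign conventions. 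If $\mathcal{P}_{\geq 4R}$ is a smooth multiplier $m(\xi/R)$ rather than a sharp cutoff, the same formula holds with $\widehat f(\xi)m(\xi/R)$ in place of $\widehat f(\xi)1_{|\xi|\geq 4R}$. The only property used is that this multiplier vanishes for $|\xi|\lesssim 3R$ and is bounded.

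The key step is a decay bound for $K_y(\xi)$. The phase of $K_y$ is $\phi(z)=z\cdot\xi-x\cdot z-t|z|^2$, so $\nabla\phi=\xi-x-2tz$. For $|y|\leq R$, $|z|\leq 2$ and $|\xi|\geq 4R$,
\[
|\nabla\phi|\geq |\xi|-R-4R\cdot\tfrac14\cdot\ldots
\]
more simply $|x+2tz|\leq |x|+4|t|\leq 5R$, so this crude bound fails. To fix it, shrink the cutoff to $\chi$ supported in $B(0,1+\delta)$ and require $|\xi|\geq 4R$ to dominate $|x|+2(1+\delta)|t|\leq\sqrt{1+4(1+\delta)^2}\,R<3R$. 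Since $\sqrt5<3$, this works for small $\delta$, giving $|\nabla\phi|\geq |\xi|/4\gtrsim |\xi|$.

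All higher derivatives of $\phi$ are $O(R)$: the Hessian is $-2tI$ and derivatives of order three and up vanish. Repeated integration by parts with the operator $L=\frac{\nabla\phi\cdot\nabla}{2\pi i|\nabla\phi|^2}$ then yields
\[
|K_y(\xi)|\lesssim_M |\xi|^{-M}
\]
uniformly in $|y|\leq R$. This uses that each step gains $|\xi|^{-1}$ and costs at most $R/|\xi|\lesssim 1$ from differentiating $\nabla\phi/|\nabla\phi|^2$.

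To conclude, apply Cauchy--Schwarz:
\[
|\mathcal{E}(\mathcal{P}_{\geq 4R}f)(y)|\leq \|f\|_{H^{-s}}\Big(\int_{|\xi|\geq 4R}(1+|\xi|^2)^{s}|\xi|^{-2M}\,d\xi\Big)^{1/2}\lesssim R^{-N}\|f\|_{H^{-s}},
\]
choosing $M$ large relative to $s$, $N$ and $n$.

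The main obstacle is the non-stationarity bound, which is where the constant $4$ is used. If the constants do not close with the geometric room available, I would instead take $\chi$ supported slightly beyond the unit ball and use the fact that $|\xi|\geq 4R$ exceeds $\sup|x+2tz|$ over the support. Failing that, I would restrict to the regime where this holds, since the lemma allows implicit constants depending on the setup. The remaining steps are routine.
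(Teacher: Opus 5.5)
Your argument is the paper's proof. You insert a smooth $\chi\equiv 1$ on $B^n(0,1)$, pair $\widehat f\,1_{\{|\xi|\ge 4R\}}$ against $\widehat{a_y}$ with $a_y=\chi e^{2\pi i(x\cdot z+t|z|^2)}$, get $|\widehat{a_y}(\xi)|\lesssim_M|\xi|^{-M}$ uniformly in $|y|\le R$ by non-stationary phase, and close with Cauchy--Schwarz against $\langle\xi\rangle^{2s}$. Your constants do close. The paper takes $\operatorname{supp}\chi\subset B(0,5/4)$ and uses $|x|+\tfrac52|t|\le\tfrac{\sqrt{29}}{2}R<4R$; your bound $\sqrt{1+4(1+\delta)^2}<3$ works equally well, so the fallback in your last paragraph is unnecessary. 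Your observation that $\nabla^2\phi=-2tI=O(R)$ with $R\lesssim|\xi|$ is exactly what makes the integration by parts uniform in $y$, a point the paper leaves implicit.

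One step is false as written, and the paper's first display makes the same move silently. This is your claim ``$\mathcal{E}g(y)=\int g\chi e^{2\pi i(x\cdot z+t|z|^2)}\,dz$ for any $g$''. It requires $\operatorname{supp} g\subset B^n(0,1)$, and $\mathcal{P}_{\ge 4R}f$ is not supported there.

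This matters: with the sharp cutoff $1_{B(0,1)}$ built into $\mathcal{E}$, the lemma as stated fails. Take a smooth Littlewood--Paley projection, write $\mathcal{P}_{<4R}:=I-\mathcal{P}_{\ge 4R}$, set $y=0$, and let $f=\psi(R(z-z_0))$ with $\psi\ge0$ a bump in $B(0,1)$ and $z_0=(1-a/R)e_1$. Then $\mathcal{E}(\mathcal{P}_{\ge 4R}f)(0)=\int_{|z|>1}\mathcal{P}_{<4R}f(z)\,dz=c_aR^{-n}+o(R^{-n})$, where rescaling by $R$ near $e_1$ shows $c_a\neq0$ for suitable $a\ge1$. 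Meanwhile $\|f\|_{H^{-s}}\approx R^{-n}$ once $s>n/2$, so no $R^{-N}$ gain is possible.

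The repair is to work throughout with the smoothly truncated operator $\mathcal{E}_\chi g:=\int g\chi e^{2\pi i(x\cdot z+t|z|^2)}\,dz$.
\begin{itemize}
\item It agrees with $\mathcal{E}$ on $f$ itself, so the splitting $\mathcal{E}f=\mathcal{E}_\chi f_{\le 4R}+\mathcal{E}_\chi f_{\ge 4R}$ used in the proof of Theorem~\ref{thm:pinned_thm} is legitimate.
\item Your argument proves the lemma verbatim for $\mathcal{E}_\chi$.
\item The Du--Zhang bound still applies, since $\mathcal{E}_\chi g$ is the extension of $\chi g$ from a slightly larger cap (by parabolic rescaling) and $\|\chi g\|_2\le\|g\|_2$.
\end{itemize}
Alternatively, keep $\operatorname{supp}f$ a fixed distance $\eta$ inside $B^n(0,1)$ and use a projection with Schwartz kernel. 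Then on $\{\chi\neq 1_{B(0,1)}\}$ one has $\mathcal{P}_{\ge 4R}f=-\mathcal{P}_{<4R}f=O_{\eta,N}(R^{-N}\|f\|_{H^{-s}})$, so the discrepancy between the two operators is an admissible error.
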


\begin{proof}
    The proof is fairly standard but we include a sketch for completeness. Let $y=(x,t)$ with $|y| \leq R$ and consider the function
    \[a_y(z) = \chi(z)e^{2\pi i (x \cdot z + t|z|^2)}\]
    where $\chi$ is a smooth bump function equal to $1$ on $B^n(0, 1)$ and supported on $B^n(0, 5/4)$. Then 
    \begin{align*}
        |\mc{E}(\mc{P}_{\geq 4R}f)(y)| & = \left|\int a_y(z)\mc{P}_{\geq 4R}f(z) \, dz\right| \\
        & = \left|\int_{|\xi| \geq 4R} \widehat{a_y}(\xi)\widehat{f}(\xi) \, d\xi\right| \\
        & \leq \|f\|_{H^{-s}}\left(\int_{|\xi| \geq 4R} \ang{\xi}^{2s}|\widehat{a_y}(\xi)|^2 \, d\xi\right)^{1/2}.
    \end{align*}
    
    The claim will follow once we show that 
    \[\int_{|\xi| \geq 4R} \ang{\xi}^{2s}|\widehat{a_y}(\xi)|^2 \, d\xi \lesssim_{s, N} R^{-2N}.\]
    We can assume $R \geq 1$ (in the case $R<1$ we use $\|a_y\|_{H^s}\lesssim_s 1$). We write 
    \[\widehat{a_y}(\xi) = \int \chi(z)e^{i\Phi_{y,\xi}(z)} \, dz\]
    where $\Phi_{y, \xi}(z) = 2\pi(x \cdot z + t|z|^2 -z \cdot \xi)$. Since $|z| \leq 5/4$, $|y| \leq R$ and $|\xi| \geq 4R$, we have that
    \[|\nabla_z \Phi_{y, \xi}(z)| \gtrsim |\xi|.\]
    Applying integration by parts $M$ times, we have that
    \[|\widehat{a_y}(\xi)| \lesssim_{M} |\xi|^{-M}.\]
    Therefore 
    \[\int_{|\xi| \geq 4R} \ang{\xi}^{2s}|\widehat{a_y}(\xi)|^2 \, d\xi \lesssim_M \int_{|\xi| \geq 4R} |\xi|^{2s - 2M} \, d\xi \lesssim_{s, N} R^{-2N}\]
    by choosing $M$ sufficiently large depending on $s$ and $N$.
\end{proof}

\begin{lemma}[Transversality Lemma]
\label{lemma: transversalityreduction} 
Let $1\leq k\leq n$ and let \(E\subset \R^n\) be a compact set. Let \(\mu\) be a nonzero
\(s\)-Frostman measure supported on \(E\), and assume \(s\in (k-1,n]\).
Let \(F_1,\dots,F_k\subset \text{supp}(\mu)\) be \(\mu\)-measurable pairwise disjoint sets with
\(\mu(F_i)>0\) for each \(i\).

Then there exist compact sets \(E_i\subset F_i\), \(i=1,\dots,k\),
with \(\mu(E_i)>0\), and a constant \(c>0\), such that
\[
 |x_1\wedge\cdots\wedge x_k|\geq c
 \qquad\text{for all }x_i\in E_i.
\]
\end{lemma}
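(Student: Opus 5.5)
The plan is to induct on $k$, picking points one at a time so that each new point lies far from the linear span of the previously chosen ones. The key input is that an $s$-Frostman measure with $s>k-1$ gives zero mass to every linear subspace of dimension at most $k-1$. Indeed, such a subspace $V$ intersected with a ball of radius $C$ is covered by $\lesssim \delta^{-(k-1)}$ balls of radius $\delta$. This gives $\mu(V\cap B(0,C))\lesssim \delta^{-(k-1)}\delta^{s}\to 0$ as $\delta\to 0$, and since $\operatorname{supp}\mu$ is compact, $\mu(V)=0$. In particular $\mu(\{0\})=0$, since $s>0$.

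\textbf{Step 1 (choosing the points).} First we may replace each $F_i$ by a compact subset of positive measure, using inner regularity of the finite Borel measure $\mu$. Next we choose points $p_1\in F_1,\dots,p_k\in F_k$ recursively. At stage $i$, the set $V_{i-1}:=\operatorname{span}(p_1,\dots,p_{i-1})$ has dimension at most $i-1\le k-1$, so $\mu(F_i\cap V_{i-1})=0<\mu(F_i)$. Rather than an arbitrary point of $F_i\setminus V_{i-1}$, we take $p_i$ to be a \emph{Lebesgue density point} of $\mu|_{F_i}$ lying outside $V_{i-1}$. This means
\[
\mu(F_i\cap B(p_i,r))>0 \quad\text{for all } r>0.
\]
Such a point exists because $\mu|_{F_i}$-almost every point of $F_i$ lies in the support of $\mu|_{F_i}$, and the complement of $V_{i-1}$ has full $\mu|_{F_i}$-measure. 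By construction $p_1,\dots,p_k$ are linearly independent, since $p_1\neq 0$ and $p_i\notin V_{i-1}$ for each $i$. Hence $|p_1\wedge\cdots\wedge p_k|=:2c>0$.

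\textbf{Step 2 (continuity).} The map $(x_1,\dots,x_k)\mapsto |x_1\wedge\cdots\wedge x_k|$ is continuous, being the square root of a Gram determinant. So there is $r>0$ such that $|x_1\wedge\cdots\wedge x_k|\ge c$ whenever $|x_i-p_i|<r$ for all $i$. We then set $E_i$ to be a compact subset of $F_i\cap \overline{B(p_i,r/2)}$ with $\mu(E_i)>0$. Such a subset exists by the density-point choice together with inner regularity. The sets $E_i$ are contained in $F_i$ and have positive measure, and every choice $x_i\in E_i$ satisfies the wedge bound.

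The main point needing care is Step 1: each new point must be both off the span of the earlier points and a point near which $F_i$ still carries positive mass. The Frostman null-subspace estimate, combined with the fact that $\mu|_{F_i}$-almost every point is a support point, handles both requirements at once. Pairwise disjointness of the $F_i$ is not actually needed for this argument, though it guarantees that the $E_i$ are disjoint.
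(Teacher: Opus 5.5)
Your proof is correct and follows the same route as the paper: Frostman measures give zero mass to subspaces of dimension at most $k-1$, you pick a linearly independent tuple of points from $\operatorname{supp}(\mu|_{F_i})$, and you conclude by continuity of the wedge and inner regularity. The only difference is that you choose the points greedily one at a time, while the paper first shows by induction and Fubini that linearly dependent $k$-tuples form a $\mu_{F_1}\times\cdots\times\mu_{F_k}$-null set and then takes a generic tuple. (Also, the property you actually use is $p_i\in\operatorname{supp}(\mu|_{F_i})$, which is weaker than being a Lebesgue density point, so that name is a misnomer.)
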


\begin{remark}
    In the \cite{BFOPR2026} machinery for $k$ admissible graphs, it suffices to have the $L^2$ $k$-star building blocks for pins coming from transverse pieces. The reason is that any sufficient threshold $\alpha_{+}^{*}(n,k)\in (0,n)$ valid for positive measure of $k$ stars is at least $k$ by \cite[Remark 7]{BFOPR2026} (so larger than $k-1$). Given a compact set $E$ of dimension larger than $\alpha_{+}^*(n,k)$, one takes a Frostman measure $\mu$ in $E$ with parameter $s>\alpha_{+}^{*}(n,k)$ and find separated $\mu$ positive measure compact pieces $F_1, F_2, \dots ,F_{|\mathcal{V}|}\subset E$ where $\mathcal{V}$ is the vertex set of the $k$ admissible graph we care for. Then one can successfully apply the lemma above to every $k$ out of $|\mathcal{V}|$ pieces. This only requires finitely many refining steps ($\binom{|\mathcal{V}|}{k}$ many steps) until we are reduced to pairwise disjoint compact pieces $\{E_i\colon 1\leq i\leq |\mathcal{V}|\}$ such that $\mu(E_i)>0$ for all $i$, and such that any $k$ pieces $\{E_{i_j}\}_{j=1}^k$, $1\leq i_1<i_2<\dots<i_k\leq |\mathcal{V}|$ are transverse. 
\end{remark}

\begin{proof}[Proof of Lemma \ref{lemma: transversalityreduction}]
We first record a simple consequence of the Frostman condition. If \(V\)
is any linear subspace of \(\R^n\) with \(\dim V\leq k-1\), then
\[
\mu(V)=0.
\]
Indeed, since \(\text{supp}(\mu)\) is compact, \(V\cap \text{supp}(\mu)\) can be covered
by \(O(r^{-\dim V})\) balls of radius \(r\). Hence, by the
\(s\)-Frostman condition,
$\mu(V)\lesssim r^{s-\dim V}.$
Since \(s>k-1\geq \dim V\), letting \(r\to0\) gives \(\mu(V)=0\).

Now define \(\mu_i:=\mu_{F_i}\) (restriction of $\mu$ to $F_i$). We claim that
\[
(\mu_1\times\cdots\times\mu_k)
\Bigl(\{(x_1,\dots,x_k):x_1\wedge\cdots\wedge x_k=0\}\Bigr)=0.
\]

Equivalently, the set of linearly dependent \(k\)-tuples has zero
\(\mu_1\times\cdots\times\mu_k\)-measure.

We prove this by induction. For \(1\leq m\leq k\), let
\[
Z_m:=\{(x_1,\dots,x_m): x_1,\dots,x_m
\text{ are linearly dependent}\}.
\]

For \(m=1\), \(Z_1=\{0\}\), and \(\mu_1(\{0\})=0\) by the observation above.

Assume the claim has been proved for \(m-1\). Then
\[
Z_m\subset (Z_{m-1}\times \R^n)
\cup
\{(x_1,\dots,x_m): x_1,\dots,x_{m-1}
\text{ are independent and }
x_m\in \text{span}(x_1,\dots,x_{m-1})\}.
\]
The first set has zero product measure by the induction hypothesis. For
the second set, fix an independent \((m-1)\)-tuple
\((x_1,\dots,x_{m-1})\). Then
\[
\text{span}(x_1,\dots,x_{m-1})
\]
is an \((m-1)\)-dimensional linear subspace. Since \(m-1\leq k-1\),
the preliminary observation gives
\[
\mu_m\bigl(\text{span}(x_1,\dots,x_{m-1})\bigr)=0.
\]
Fubini therefore gives that the second set also has zero product measure.
This proves the claim for \(m\), and hence for \(m=k\).

Since each \(\mu_i\) is nonzero, the product measure
\(\mu_1\times\cdots\times\mu_k\) has positive total mass. Therefore we
may choose
\[
(a_1,\dots,a_k)\in F_1\times\cdots\times F_k
\]
outside the exceptional set, with each \(a_i\in \text{supp}(\mu_i)\). Thus
\[
|a_1\wedge\cdots\wedge a_k|>0.
\]
By continuity of the map
\[
(x_1,\dots,x_k)\mapsto |x_1\wedge\cdots\wedge x_k|,
\]
there exist radii \(\rho_i>0\) and a constant \(c>0\) such that
\[
|x_1\wedge\cdots\wedge x_k|\geq c
\]
whenever \(x_i\in B(a_i,\rho_i)\) for every \(i\).

Since \(a_i\in \text{supp}(\mu_i)\), we have
\[
\mu_i(B(a_i,\rho_i))>0.
\]
Equivalently,
\[
\mu(F_i\cap B(a_i,\rho_i))>0.
\]
By inner regularity, choose compact sets
\[
E_i\subset F_i\cap B(a_i,\rho_i)
\]
with \(\mu(E_i)>0\). Then, for all \(x_i\in E_i\),
\[
|x_1\wedge\cdots\wedge x_k|\geq c.
\]

\end{proof}

\begin{theorem}\label{thm:pinned_thm}
    Let $\alpha > 0$. Suppose that for some $\gamma = \gamma(\alpha) \geq 0$, we have the estimate
    \[\int_{B^{n+1}(0, R)}|\mc{E}f(y)|^2 \, d\nu(y) \lesssim R^{\gamma}\|f\|_{L^2}^2\]
    for all $(\alpha + 1)$-Frostman measures $\nu$ in $\R^{n+1}$ and $f$ supported on the unit ball in $\R^n$. Then if $E\subset \R^n$ is a compact set with Hausdorff dimension 
    \[\dim(E)>\alpha > \frac{n + k - 1 + \gamma}{2},\]
    and $\mu$ is an $\alpha$-Frostman measure on $E$, then $E$ has an abundance of $L^2$ $k$-pinned stars relative to $\mu$ and in particular, for an abundance of pins $(x_1,x_2,\dots ,x_k)\in E^k$, one has that 
\[  \mathcal{L}^k \bigl(\Delta_{x_1,\dots,x_k}^{k\text{-star}}(E)\bigr)>0.
\]
   
\end{theorem}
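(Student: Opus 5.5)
Fix pairwise separated compact sets $E_1,\dots,E_{k+1}\subset E\setminus\{0\}$ of positive $\mu$-measure, with $\{E_1,\dots,E_k\}$ transverse. Write $\mu_j=\mu_{E_j}$. It suffices to show
\[
\int\!\!\int_{(0,\infty)^k}\bigl|(d_{\bm x})_*(\mu_{k+1})(\bm t)\bigr|^2\,d\bm t\,d\bm\mu(\bm x)<\infty,\qquad \bm\mu=\mu_1\times\cdots\times\mu_k,
\]
in the sense of densities. Positivity of $\mathcal L^k$ then follows, because a nonzero $L^2$ measure supported on the $k$-star set forces that set to have positive measure.

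\emph{Normalization.} By translation and dilation (both preserve the problem and the Frostman property), and possibly after shrinking the $E_j$ further via Lemma~\ref{lemma: transversalityreduction} and inner regularity, I arrange $\operatorname{supp}\mu_j\subset\{1/2\le|x|\le1\}$ for $j\le k$. I also want \cref{eq:transversalityassumption} and $E_{k+1}\subset B^n(0,1)$ with the distances $|x_j-x_{k+1}|$ bounded above and below. Transversality with respect to the origin is not preserved by translation, so I first translate so that the origin is a suitable point, e.g. a point of $E_{k+1}$'s neighborhood making the translated pins transverse. Then I reapply Lemma~\ref{lemma: transversalityreduction} to the translated measure (valid since $\alpha>k-1$) to pass to subpieces. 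On the relevant region, $W(\bm x,\bm t)=2^k\prod|x_j|^2/\prod t_j\approx 1$. Hence the quantity above is comparable to the left side of Theorem~\ref{thm:parab_extension}, applied with $f$ replaced by a mollification $f_\delta=\mu_{k+1}*\phi_\delta$ (Lebesgue density in $x_{k+1}$). I then need bounds uniform in $\delta$, after which a limiting argument (Fatou / weak-$L^2$ compactness) transfers them to $\mu_{k+1}$ itself. Here the coarea remark identifies $\mathcal A_{\bm t}f_\delta(\bm x)\,d\bm t=(d_{\bm x})_*(f_\delta)$.

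\emph{Main estimate.} By Theorem~\ref{thm:parab_extension} and Theorem~\ref{thm:k_weight_frostman},
\[
\int|\mathcal E f_\delta|^2\,dw\le \int_{B_0}|\mathcal Ef_\delta|^2dw^0+\sum_{i\ge1}R_i^{-\alpha+k-1}\int_{A_{R_i}}|\mathcal Ef_\delta|^2\,dw^i .
\]
The $B_0$ term is $\lesssim\|f_\delta\|_{L^1}^2\lesssim1$. On $A_{R_i}$, I split $f_\delta=\mathcal P_{\le 4R_i}f_\delta+\mathcal P_{\ge4R_i}f_\delta$. For the high-frequency part, Lemma~\ref{lem:stationary_phase} gives $\sup_{|y|\le R_i}|\mathcal E(\mathcal P_{\ge 4R_i}f_\delta)|\lesssim R_i^{-N}\|\mu_{k+1}\|_{H^{-s}}$. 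Combined with $w^i(A_{R_i})\lesssim R_i^{\alpha+1}$, this is summable. The low-frequency piece must be cut off smoothly so that its support stays near $B(0,1)$; I would use the smoothed version and rescale slightly. The hypothesis with $\nu=w^i$ then gives
\[
\int_{A_{R_i}}|\mathcal E\mathcal P_{\le4R_i}f_\delta|^2dw^i\lesssim R_i^{\gamma}\|\mathcal P_{\le 4R_i}f_\delta\|_2^2\lesssim R_i^{\gamma}\int_{|\xi|\lesssim R_i}|\widehat{\mu_{k+1}}(\xi)|^2d\xi\lesssim R_i^{\gamma+n-\alpha+\epsilon}.
\]
The last step is the standard energy bound: $\int_{|\xi|\le R}|\widehat\mu|^2\lesssim R^{n-\alpha'}$ for any $\alpha'<\alpha$, from $I_{\alpha'}(\mu)<\infty$. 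The $i$-th term is therefore $\lesssim R_i^{-\alpha+k-1+\gamma+n-\alpha+\epsilon}$, which is summable exactly when $2\alpha>n+k-1+\gamma$.

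\emph{Main obstacle.} I expect the hardest step to be the low/high frequency splitting. $\mathcal P_{\le R}f$ is no longer supported in $B^n(0,1)$, while the hypothesis requires support in the unit ball. I would handle this by inserting a smooth cutoff $\chi$ equal to $1$ on $\operatorname{supp}f_\delta$ and writing $\mathcal E f=\mathcal E(\chi\mathcal P_{\le4R}f)+\mathcal E(\chi\mathcal P_{\ge4R}f)$. The second term is treated by the argument of Lemma~\ref{lem:stationary_phase} with $a_y$ absorbing $\chi$. The first is supported in a slightly larger ball, which a fixed rescaling of the paraboloid reduces to the unit ball at the cost of constants. The other delicate point is ensuring that the normalization with transversality relative to the origin is compatible with translation, as described above.
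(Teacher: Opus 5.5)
Your analytic argument is essentially the paper's. You use $W\gtrsim 1$ on the relevant region, \cref{thm:parab_extension}, the Frostman decomposition of \cref{thm:k_weight_frostman}, and \cref{lem:stationary_phase} to discard frequencies $\gtrsim R_i$ on $A_{R_i}$. You then apply the hypothesis scale by scale and sum, which closes exactly when $2\alpha>n+k-1+\gamma$. The paper packages this differently: it first proves $\int\!\int|\mathcal A_{\bm t}f|^2\,d\bm t\,d\bm\mu\lesssim\int|\xi|^{-\beta}|\widehat f|^2\,d\xi$ for smooth $f$ with $\beta>n-\alpha$, then inserts $f=\mu_{k+1}*\eta_R$. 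Your per-scale bound $\int_{|\xi|\le R}|\widehat{\mu_{k+1}}|^2\lesssim R^{n-\alpha}$ is equivalent, and in fact needs no $\epsilon$ loss. Your smooth cutoff $\chi$ with a fixed rescaling of the paraboloid handles the support of $\mathcal P_{\le 4R_i}f$ more carefully than the paper, which applies the hypothesis to $\mathcal P_{\le 4R_i}f$ without comment.

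You should remove the translation step. The definition of abundance already gives you compact, pairwise separated $E_1,\dots,E_{k+1}\subset E\setminus\{0\}$ with $\{E_1,\dots,E_k\}$ transverse with respect to the origin. By compactness they all lie in $\{\epsilon\le|x|\le C\}$. A pure dilation preserves transversality about $0$ and the Frostman property, and only rescales the $k$-star sets. It puts everything in $B^n(0,1/2)$ with pins satisfying $|x_j|\gtrsim\epsilon/C$. The constants in \cref{sec:facts_weight} depend only on this ratio, which is all the paper's ``up to a rescaling'' means.

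As you describe it, translating and then re-applying \cref{lemma: transversalityreduction} to shrink the pin sets only gives the $L^2$ property for almost every pin in some subpieces $E_j'\subset E_j$. Abundance requires it $\bm\mu$-almost everywhere on the given $E_1\times\cdots\times E_k$. Passing back from subpieces would need an extra exhaustion or density argument that you have not supplied. With the translation dropped, your proof is complete.
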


\begin{proof}
    Let $\mu$ be an $\alpha$-Frostman measure on $E$. Let
\(E_1,\dots,E_{k+1}\subset E\setminus\{0\}\) be a collection of $k+1$ separated compact subsets of $E\setminus \{0\}$ with \(\mu(E_i)>0\) for each \(1\leq i\leq k+1\) and $\{E_1,E_2,\dots ,E_k\}$ transverse. More precisely, we are assuming the following properties
    
    \begin{enumerate}
    \item Positive measure: $\mu(E_j)>0$ for each $1\leq j\leq k+1$.
        \item Transversality: $|x_1 \wedge \cdots \wedge x_k| \gtrsim 1$ for all $x_j \in E_j$, $1\leq j\leq k$.
        
         \item Distance from the origin: For all $1\leq j\leq k+1$ $E_j \subset \{x \in \R^n : \epsilon\leq |x| \leq C\}$ where $\epsilon,C>0$. Up to a rescaling argument we can assume that $\epsilon=1/2$ and $C=1$, so that $E_j \subset \{x \in \R^n : 1/2\leq |x| \leq 1\}$.
    \end{enumerate}

In practice, collections of $k+1$ pieces with these properties always exist whenever $\dim(E)>k-1$. One first applies the standard separation Lemma in \cite[Lemma 3]{BIO2023} successively to get to disjoint $k+1$ compact pieces of positive measure and that do not intersect the origin (properties (2) and (3)), and then for the transversality property, one applies the transversality Lemma to these pieces \ref{lemma: transversalityreduction} (observe that it is relevant that $\alpha>\frac{n+k-1}{2}>k-1$ so that Lemma applies).
    
     For each $1\leq j\leq k+1$, let $\mu_j$ be corresponding $\alpha$-Frostman measures on $E_j$, that is, $\mu_j=\mu_{E_j}$. As before denote $\bm{\mu}=\prod_{j=1}^{k} \mu_j$. It suffices to show that 
    \begin{equation}\label{eq:distance_measure}
        \int \int_{\R^k}|\mc{A}_{\bm{t}}f(\bm{x})|^2\, d\bm{t} \, d\bm{\mu}(\bm{x}) \lesssim \int_{\R^{n}} |\xi|^{-\beta}|\widehat{f}(\xi)|^2 \, d\xi
    \end{equation}
    for all smooth compactly supported functions $f$ and some $\beta > n - \alpha$. Indeed, if this is the case we set $(\mu_{k+1})_R = (\mu_{k+1}) * \eta_{R}$ where 
    \[\eta_R(x) = R^{n}\eta(Rx)\]
    for some smooth compactly supported function $\eta$ with $\int \eta = 1$. Then by \eqref{eq:distance_measure}, we have that 
    \[\int \int_{\R^k}|\mc{A}_{\bm{t}}(\mu_{k+1})_R(\bm{x})|^2\, d\bm{t} \, d\bm{\mu}(\bm{x}) \lesssim \int_{\R^{n}} |\xi|^{-\beta}|\widehat{\mu_{k+1}}(\xi)|^2 |\widehat{\eta}(R\xi)|^2\, d\xi \leq \int_{\R^{n}} |\xi|^{-\beta}|\widehat{\mu_{k+1}}(\xi)|^2 \, d\xi < \infty\]
    since $\beta > n - \alpha$ and $\mu_{k+1}$ is $\alpha$-Frostman.
    Passing to the limit as \(R\to \infty\), we obtain
\[
\int_{E_1\times\cdots\times E_k}
\|(d_{\bm x})_*\mu_{k+1}\|_{L^2(\R^k)}^2
\,d\bm\mu(\bm x)<\infty.
\]
Hence
\[
(d_{\bm x})_*\mu_{k+1}\in L^2(\R^k)
\]
for \(\bm\mu\)-almost every \(\bm x\in E_1\times\cdots\times E_k\), which is the conclusion of the theorem. \par
    It remains to establish \eqref{eq:distance_measure}. By \cref{thm:parab_extension}, for all smooth and compactly supported functions $f$, we have that
    \[\int \int_{\R^k}|\mc{A}_{\bm{t}}f(\bm{x})|^2W(\bm{x}, \bm{t}) \, d\bm{t} \, d\bm{\mu}(\bm{x}) = \int_{\R^{n+1}}|\mc{E}f(y)|^2 dw(y)\]
    where the weight $w$ is defined in \cref{thm:parab_extension}. Using the compact support of $f$ and the $\mu_j$,
    \[\mc{A}_{\bm{t}}f(\bm{x}) = 0\]
    if $t_j \geq C$ for some $t_j$. Therefore we have that 
    \[W(\bm{x}, \bm{t}) = \frac{2^k|x_1|^2 \cdots |x_k|^2}{t_1 \cdots t_k} \gtrsim 1\]
    in the integration region. In particular,
    \[\int \int_{\R^k}|\mc{A}_{\bm{t}}f(\bm{x})|^2 \, d\bm{t} \, d\bm{\mu}(\bm{x}) \lesssim \int_{\R^{n+1}}|\mc{E}f(y)|^2 dw(y).\]
    By \cref{thm:k_weight_frostman}, we can write 
    \[w = w^0 + \sum_{i = 1}^{\infty}R_i^{-\alpha + k - 1} w^{i}\]
    where $w^0(B_0) \lesssim 1$ and each $w^i$ is an $(\alpha + 1)$-Frostman measure on $B^{n+1}(0, R_i)$. Therefore we can write
    \[\int_{\R^{n+1}}|\mc{E}f(y)|^2 \, dw(y) \lesssim \int_{B_0}|\mc{E}f(y)|^2 \, dw^0(y) + \sum_{i = 1}^{\infty}R_i^{-\alpha + k - 1}\int_{A_{R_i}}|\mc{E}f(y)|^2 \, dw^i(y).\]
    Let $f_{\leq R} := \mc{P}_{\leq R}f$ and $f_{\geq R} := \mc{P}_{\geq R}f$. By Lemma \ref{lem:stationary_phase}, we have that
    \[|\mc{E}f(y) - \mc{E}f_{\leq 4R}(y)| \lesssim_{s, N} R^{-N}\|f\|_{H^{-s}}\]
    for all $y \in B^{n+1}(0, R)$ and any $s, N > 0$. Therefore if $N$ is large enough, the right hand side is bounded by a constant times
    \[\int_{B_0}|\mc{E}f_{\leq 4}(y)|^2 \, dw^0(y) + \sum_{i = 1}^{\infty}R_i^{-\alpha + k - 1}\int_{A_{R_i}}|\mc{E}f_{\leq 4R_i}(y)|^2 \, dw^i(y) + \|f\|_{H^{-s}}^2.\]
    To bound the first term, we can simply use 
    \begin{align*}
        \int_{B_0}|\mc{E}f_{\leq 4}(y)|^2 \, dw^0(y) & \leq \|\mc{E}f_{\leq 4}\|_{L^\infty(B_0)}^2w^0(B_0) \\
        & \lesssim \|\mc{E}f_{\leq 4}\|_{L^\infty(B_0)}^2 \\
        & \lesssim \left(\int |f_{\leq 4}(x)| \, dx\right)^2 \\
        & \lesssim \int_{|\xi| \leq 4}|\widehat{f}(\xi)|^2 \, d\xi \\
        & \leq \|f\|_{H^{-s}}^2
    \end{align*}
    To bound the second term, we can apply the assumption of the theorem for the $(\alpha + 1)$-Frostman measure $w_i$ to get that
    \[\int_{A_{R_i}}|\mc{E}f_{\leq 4R_i}(y)|^2 \, dw^i(y) \lesssim R_i^{\gamma}\|f_{\leq 4R_i}\|_{L^2}^2.\]
    By the assumption on $\alpha$, there is some small $\epsilon > 0$ so that
    \[\beta :=  \alpha - k + 1 - \gamma - \epsilon > n-\alpha .\]
    Therefore 
    \begin{align*}
        R_i^{-\alpha + k - 1}\int_{A_{R_i}}|\mc{E}f_{\leq 4R_i}(y)|^2 \, dw^i(y) & \lesssim R_i^{-\epsilon} R_i^{-\alpha + k - 1 + \gamma + \epsilon}\|f_{\leq 4R_i}\|_{L^2}^2 \\
        & \leq R_i^{-\epsilon}R_i^{-\beta} \int_{|\xi| \leq 4R_i}|\widehat{f}(\xi)|^2 \, d\xi \\
        & \lesssim R_i^{-\epsilon} \int_{|\xi| \leq 4R_i}|\widehat{f}(\xi)|^2 |\xi|^{-\beta} \, d\xi.
    \end{align*}
    By choosing $s$ large enough so that $2s > \beta$, we conclude \eqref{eq:distance_measure} as wanted.
\end{proof}

\cref{thm:main} is now a simple consequence of the theorem above and the local weighted Fourier extension estimate established in \cite{DZ2019}.

\begin{theorem}[Du--Zhang]\label{thm:du_zhang}
    Let $\nu$ be an $\eta$-Frostman measure on $B^{n+1}(0, R)$. For all $\epsilon > 0$, we have the estimate
    \[\int_{B^{n+1}(0, R)}|\mc{E}f(y)|^2 \, d\nu(y) \lesssim_\epsilon R^{\frac{\eta}{n + 1} + \epsilon}\|f\|_{L^2}^2.\]
\end{theorem}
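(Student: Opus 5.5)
The plan is to deduce this from the main theorem of \cite{DZ2019} in its discretized form, after a locally-constant reduction. Du--Zhang prove the following. Let $X=\bigcup_{Q}Q$ be a union of lattice unit cubes in $B^{n+1}(0,R)$ satisfying
\[
|X\cap B(x,r)|\lesssim \kappa\, r^{\eta}\qquad\text{for all } x \text{ and all } r\geq 1 .
\]
Then
\[
\|\mc{E}f\|_{L^2(X)}\lesssim_\epsilon \kappa^{1/(n+1)}R^{\frac{\eta}{2(n+1)}+\epsilon}\|f\|_{L^2}.
\]
The proof of that statement is the genuinely hard part, and I would take it as a black box. I recall its structure only for orientation: a broad--narrow decomposition; multilinear refined Strichartz estimates, built from the Bennett--Carbery--Tao multilinear Kakeya/restriction inequality together with $l^2$ decoupling; and induction on scales, using parabolic rescaling for the narrow part.

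\textbf{Step 1 (locally constant reduction).} Since $\widehat{\mc{E}f}$ is supported in a compact piece of $\mathbb{P}^n$, I fix a Schwartz $\psi$ with $\widehat\psi\equiv1$ on that piece. Then $\mc{E}f=\mc{E}f*\psi$, and Cauchy--Schwarz gives the pointwise bound
\[
|\mc{E}f|^2\lesssim |\mc{E}f|^2*|\psi|.
\]
Hence
\[
\int|\mc{E}f|^2\,d\nu\lesssim\int|\mc{E}f|^2\,(\nu*|\psi|)\,dy .
\]
The weight $\nu*|\psi|$ is essentially constant on unit cubes up to rapidly decaying tails. Its value on a lattice cube $Q$ is $\lesssim\sum_{Q'}\nu(Q')(1+\mathrm{dist}(Q,Q'))^{-N}$. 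The far tails outside $B(0,2R)$ are handled by the trivial bound $|\mc{E}f|\le\|f\|_{1}\lesssim\|f\|_2$ together with the decay of $\psi$. Inside $B(0,CR)$ I may treat $\nu$ as the weight $\sum_Q a_Q 1_Q$, where $a_Q\approx\nu(\tilde Q)\lesssim 1$ and $\tilde Q$ is a bounded dilate of $Q$.

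\textbf{Step 2 (dyadic pigeonholing of masses).} I split the cubes into classes $\mathcal{Q}_m=\{Q: a_Q\sim 2^{-m}\}$ for $m\ge0$. For $m\ge C\log R$ the total contribution is trivially $\lesssim R^{n+1}2^{-m}\|f\|_2^2$, which is summable and negligible. This leaves $O(\log R)$ classes, costing only $R^\epsilon$. For a fixed $m$, let $X_m=\bigcup_{Q\in\mathcal{Q}_m}Q$. The Frostman condition on $\nu$ gives, for $r\ge1$,
\[
2^{-m}|X_m\cap B(x,r)|\lesssim\nu(B(x,Cr))\lesssim r^\eta,
\]
so $X_m$ satisfies the hypothesis above with $\kappa=2^m$. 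Applying Du--Zhang, the contribution of the class is bounded by
\[
2^{-m}\int_{X_m}|\mc{E}f|^2\lesssim 2^{-m}\,2^{2m/(n+1)}R^{\frac{\eta}{n+1}+\epsilon}\|f\|_2^2 .
\]
Because $n\geq 1$, the exponent satisfies $-1+\tfrac{2}{n+1}\le0$. So every class is bounded by $R^{\eta/(n+1)+\epsilon}\|f\|_2^2$, and summing over the $O(\log R)$ classes gives the claim after adjusting $\epsilon$.

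\textbf{Main obstacle.} The only delicate bookkeeping is the $\kappa$-dependence in Step 2. If the available version of \cite{DZ2019} is stated only for $\kappa=1$, I would instead cover $X_m$ at scale $\rho=2^{m/\eta}$. I would then either apply parabolic rescaling, or simply bound $2^{-m}|X_m|$ against $\nu$ at the coarser scale and use the $\kappa=1$ estimate there. Either route needs a check that the loss is still absorbed by the factor $2^{-m}$. Everything else is routine.
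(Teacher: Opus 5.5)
Your proposal is correct and matches the paper, which gives no proof of this estimate and simply imports it from \cite{DZ2019}. Your locally-constant reduction and dyadic pigeonholing of the weight are the routine steps that turn Du--Zhang's union-of-unit-cubes formulation into the Frostman-measure version stated here, and the $\kappa$-bookkeeping closes because $2^{-m}\cdot 2^{2m/(n+1)}\le 1$ for $n\ge 1$. One small imprecision: $a_Q$ is really the tail sum $\sum_{Q'}\nu(Q')(1+\mathrm{dist}(Q,Q'))^{-N}$ rather than $\nu(\tilde Q)$, but it still satisfies $a_Q\lesssim 1$ and $\sum_{Q\subset B(x,r)}a_Q\lesssim r^{\eta}$ for $r\ge 1$, which is all Step 2 needs.
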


\begin{proof}[Proof of \cref{thm:main}]
By \cref{thm:du_zhang}, applied with \(\eta=\alpha+1\), the hypothesis of
\cref{thm:pinned_thm} is satisfied with
\[
\gamma=\frac{\alpha+1}{n+1}+\epsilon
\]
for every \(\epsilon>0\). Since
\[
\alpha>\alpha_+(n,k)
=\frac{n^2+kn+k}{2n+1},
\]
we have
\[
2\alpha>n+k-1+\frac{\alpha+1}{n+1}.
\]
Choosing \(\epsilon>0\) sufficiently small, we obtain
\[
2\alpha>n+k-1+\frac{\alpha+1}{n+1}+\epsilon
=
n+k-1+\gamma.
\]
Equivalently,
\[
\alpha>\frac{n+k-1+\gamma}{2}.
\]
Therefore the assumptions of \cref{thm:pinned_thm} are satisfied, and hence \(E\) has an abundance of \(L^2\) \(k\)-pinned stars relative to \(\mu\).
\end{proof}

As a consequence of \cref{thm:parab_extension} as well as Liu's identity, we can derive a relationship between the Fourier extension operator for the paraboloid and the Fourier extension operator for the sphere. Indeed, let $\mu$ be a measure supported on the annulus $\{x \in \R^n : 1/2 \leq |x| \leq 1\}$ and let $f$ be a function supported on the unit ball with support positive distance from $\text{supp} \, \mu$. Then by \cref{thm:parab_extension}, for $k = 1$, we have 
\[\int_{\R^n} \int_{\R}|\mc{A}_{t}f(x)|^2\frac{2|x|^2}{t} \, dt \, d\mu(x) = \int_{\R^{n+1}}|\mc{E}_{\mathbb{P}^n}f(y)|^2 dw(y)\]
where 
\[w = \Phi_*(\mu \times dt), \quad \Phi(x, t) = \frac{t}{4|x|^2}(-2x, 1).\]
Since the supports of $\mu$ and $f$ are at positive distance from one another, we have that $t \approx 1$ for all $t$ in the integration region. Letting 
\[\sq{\mc{A}}_tf(x) := \int_{S^{n-1}(x, t)}f(y) \, d\sigma_{x,t}(y)\]
denote the \emph{normalized} spherical averaging operator, we have that
\[\int_{\R^{n+1}}|\mc{E}_{\mathbb{P}^n}f(y)|^2 dw(y) \approx \int_{\R^n} \int_{\R}|\sq{\mc{A}}_{t}f(x)|^2 t^{n-1} \, dt \, d\mu(x).\]
Now by applying Liu's identity, we have that
\begin{align*}
    \int_{\R^n} \int_{\R}|\sq{\mc{A}}_{t}f(x)|^2 t^{n-1} \, dt \, d\mu(x) & = \int_{\R^n} \int_{0}^{\infty}|\widehat{\sigma}_{r} * f(x)|^2 r^{n-1} \, dr \, d\mu(x) \\
    & = \int_{\R^n} \int_{0}^{\infty}|\mc{E}_{rS^{n-1}}\widehat{f}(x)|^2 r^{n-1} \, dr \, d\mu(x).
\end{align*}
Therefore we have the equivalence 
\[\int_{\R^{n+1}}|\mc{E}_{\mathbb{P}^n}f(y)|^2 dw(y) \approx  \int_{\R^n} \int_{0}^{\infty}|\mc{E}_{rS^{n-1}}\widehat{f}(x)|^2 r^{n-1} \, dr \, d\mu(x)\]
relating the Fourier extension operator for the paraboloid to the Fourier extension operator for the sphere. Now assume that $\mu$ is $\alpha$-Frostman. In the proof of \cref{thm:pinned_thm} for $k=1$, we showed that 
\[\int_{\R^{n+1}}|\mc{E}_{\mathbb{P}^n}f(y)|^2 dw(y) \lesssim_\epsilon \int_{\R^n}|\xi|^{-\alpha + \frac{\alpha + 1}{n + 1} + \epsilon}|\widehat{f}(\xi)|^2 \, d\xi.\]
This argument gives the threshold 
\[\alpha > \frac{n}{2} + \frac{1}{4} + \frac{3}{4(2n + 1)}\]
for the existence of pinned distance sets with positive measure. However, if one uses Du-Zhang's weighted restriction estimates for the $(n-1)$-dimensional sphere instead of the $n$-dimensional paraboloid (which is usually used to obtain results on Falconer's problem), one can show that
\[\int_{\R^{n+1}}|\mc{E}_{\mathbb{P}^n}f(y)|^2 dw(y) \lesssim_\epsilon \int_{\R^n}|\xi|^{-\alpha + \frac{\alpha}{n} + \epsilon}|\widehat{f}(\xi)|^2 \, d\xi\]
which is a bit better and gives
\[\alpha > \frac{n}{2} + \frac{1}{4} + \frac{1}{4(2n - 1)},\]
which was exactly Du and Zhang's threshold for the Falconer distance problem in $\R^n$ (see \cite[Theorem 2.6]{DZ2019}). An interesting question is to obtain better bounds for 
\[\int_{B^{n+1}(0, R)}|\mc{E}_{\mathbb{P}^n}\mu(y)|^2 d\nu(y)\]
when $\mu$ is an $\alpha_1$-Frostman measure on $B^n(0, 1)$ and $\nu$ is an $\alpha_2$-Frostman measure on $B^{n+1}(0, R)$. Following the above arguments in the case $k=1$, one can get slightly better bounds when the measure $\nu$ is the appropriate rescaled version of $w$ (in other words, $\nu$ is constant on lines through the origin). Obtaining improved bounds for general Frostman measures $\nu$ would allow us to improve results for $k$-stars, leading to improved results for graphs in general. 

\section{Nonempty Interior Results and Dimension Estimates}\label{sec:nonemptyanddimresults}

In this section, we adapt the methods used in the proof of the improved thresholds for positive measure of $k$-stars in Theorem \ref{thm:main} to get nonempty interior and dimension estimates for $k$-stars. We start by establishing Theorem \ref{thm: nonemptyinteriorforkstars}.

\begin{proof}[Proof of Theorem \ref{thm: nonemptyinteriorforkstars}]
The setup is similar to before. Take $\alpha$ with  $\dim(E)>\alpha>\alpha^{\circ}(n,k)$ and $\mu$ a Frostman measure on $E$. We split $E$ into separated compact sets $E_1,E_2, \dots ,E_k$ and $E_{k+1}$ with positive $\mu$ measure and satisfying transversality condition on $E_1, \dots, E_k$. As before, $E_1, E_2, \dots, E_k$ are the sets from which we draw the pins. Denote $\mu_i$ the restriction of $\mu$ to $E_i$, $1\leq i\leq k+1$, and $\bm{\mu}=\prod_{i=1}^{k} \mu_i$.

Recall that in the proof of \cref{lemma: equalityreplacingliuidentity}, for $\bm{x}$ fixed, we showed that the Fourier transform of 
\[\bm{s} \mapsto \mc{A}_{\Gamma_{\bm{x}}(\bm{s})}f(\bm{x})J_{\bm{x}}(\bm{s})1_{(-1/4,\infty)^k}(\bm{s})\]
is 
\[\bm{r}\mapsto \mc{E}f(-(r_1\pi^{-1}(x_1)+\dots + r_k\pi^{-1}(x_k))).\]

If we show that the integral
\begin{equation}\label{eq:estimatefornonemptyinteriorkstar}
 \int\int_{\R^k}|\mc{E}(\mu_{k+1})(r_1\pi^{-1}(x_1)+\dots +r_k\pi^{-1}(x_k))|^2 \, (1+|\bm{r}|)^{k+\epsilon}d\bm{r} \, d\bm{\mu}(\bm{x})   
\end{equation}
is finite for some $\epsilon>0$, then by Sobolev embedding one has that for $\prod_{i=1}^{k}\mu_i$ a.e. $(x_1,\dots ,x_k)\in E_1\times \dots \times E_k$. the function 
$$\bm{s}\mapsto \mathcal{A}_{\Gamma_{\bm{x}}(\bm{s})}\mu_{k+1}(\bm{x})J_{\bm{x}}(\bm{s})=(d_{\bm{x}})_{*}(\mu_{k+1})(\Gamma_{\bm{x}}(\bm{s}))J_{\bm{x}}(\bm{s})$$ is continuous. Since $\Gamma_{\bm{x}}:(-1/4,\infty)^k\rightarrow (0,\infty)^{k}$ is a diffeomorphism, continuity is also true for the map 
$$\bm{t}\mapsto (d_{\bm{x}})_{*}(\mu_{k+1})(\bm{t})W(\bm{x},\bm{t}).$$
The factor $W(\bm{x},\bm{t})=\prod_{j=1}^{k} J(\Gamma_{x_j}^{-1}(t_j))=\frac{2^k|x_1|^2\dots |x_k|^2}{t_1t_2\dots t_k}$ is continuous positive function, so 
$$\bm{t}\mapsto (d_{\bm{x}})_{*}(\mu_{k+1})(\bm{t})\text{ is continuous}.$$

Therefore, one can find a ball in the support of $ \mapsto(d_{\bm{x}})_{*}(\mu_{k+1})$, that is $\Delta_{x_1,\dots,x_k}(E_{k+1})$ has nonempty interior. 

We are left to check the main estimate (\ref{eq:estimatefornonemptyinteriorkstar}). Let $\nu_i=(\pi^{-1})_*(\mu_i)$ and $F_i=\pi^{-1}(E_i)$ for $i=1,2,\dots, k$, and note that

\begin{align*}
        \int_{E_1\times \dots \times E_k}\int_{\R^k}&|\mc{E}(\mu_{k+1})(r_1\pi^{-1}(x_1)+\dots +r_k\pi^{-1}(x_k))|^2 \, (1+|\bm{r}|)^{k+\epsilon}d\bm{r} \, d\bm{\mu}(\bm{x})\\   =\int_{F_1\times \dots \times F_k}\int_{\R^k}&|\mc{E}(\mu_{k+1})(r_1y_1+\dots +r_ky_k)|^2 (1+|\bm{r}|)^{k + \epsilon} \, d\bm{r} \, d(\nu_1\times \dots \times \nu_k)(\bm{y})  
\end{align*}

Since $\bm{\nu}$ is a finite measure, we reduce to integrating over $|\bm{r}|>1$ instead. So it suffices to show that 
\[\int_{F_1\times \dots \times F_k}\int_{|\bm{r}|>1}|\mc{E}(\mu_{k+1})(r_1y_1+\dots +r_ky_k)|^2 |\bm{r}|^{k + \epsilon} \, d\bm{r} \, d\bm{\nu}(\bm{y}) < \infty\]
for some $\epsilon > 0$. The steps to show that this integral is finite are similar as for the positive measure case. 

First, we claim that to bound the factor of $\bm{r}$ inside the integral, we can use 
$$|\bm{r}|\lesssim |r_1y_1+r_2y_2+\dots +r_ky_k|.$$
Indeed, 
\[r_j y_j = r_j \pi^{-1}(x_j) = \Phi(x_j, r_j) \in \text{supp} \, w_j.\]
Therefore by \cref{lem:weight_support}, we have that
\[|\bm{r}| \lesssim |r_1| + ... + |r_j| \lesssim |r_1 y_1| + ... + |r_ky_k| \lesssim |r_1y_1+r_2y_2+\dots +r_ky_k|\]
for all \(y_j\in F_j\) and all \(\bm r\in\R^k\).

Denote
\[
w_j:=\Psi_*(\nu_j\times dr_j),\qquad \Psi(y,r)=ry,
\]
and let \(w=w_1*\cdots*w_k\). Following the same argument as in the proof of Theorem~\ref{thm:parab_extension}, we can write

\begin{align*}
    \int_{F_1\times \dots \times F_k}\int_{|\bm{r}|>1}&|\mc{E}(\mu_{k+1})(r_1y_1+\dots +r_ky_k)|^2 |\bm{r}|^{k + \epsilon} \, d\bm{r} \, d(\nu_1\times \dots \times \nu_k)(\bm{y})  \\
    \lesssim 
    & \int |\mc{E}(\mu_{k+1})(y_1 + \cdots + y_k)|^2 |y_1+\dots +y_k|^{k+\epsilon}\, dw_1(y_1)\dots dw_k(y_k) \\ = & \int_{\R^{n+1}} |\mc{E}(\mu_{k+1})(y)|^2 |y|^{k+\epsilon}\, dw(y).
\end{align*}

From this point, the estimates are the same as in the proof of Theorem \ref{thm:main}, except we have an additional power of $R^{k+\epsilon}$ in the local estimates. I.e., we get 
\[R_i^{-\alpha+k-1}\int_{A_{R_i}}|\mc{E}(\mu_{k+1})(y)|^2 |y|^{k+\epsilon}dw^i(y)  \lesssim R_i^{k -\alpha+k-1 +\frac{\alpha+1}{n+1}+\epsilon}\|(\mu_{k+1})_{\leq R_i}\|_{L^2}^2\]
To close the estimate, we will need
\[k-\alpha+k-1+\frac{\alpha+1}{n+1}<\alpha-n,\] that is 
\[\alpha>\frac{n+2k-1}{2}+\frac{1}{4}+\frac{4k+1}{4(2n+1)}.\]
\end{proof}

\par

The technique in the proof of the theorem above works for all $k\geq 1$. When $k=1$, we can do better, though. That is the content of Theorem \ref{thm: nonemptyinteriorfor1star}, whose proof is provided in Appendix A.

\begin{remark}
    The threshold we get for $n = 2$ and $k=1$ is trivial. In fact, the $L^2$ method strategy used above will never give something nontrivial for the nonempty interior problem in the plane. Indeed, if 
    \[{\mathcal{S}}f(x) := \sup_{1 \leq t \leq 2}|\mc{A}_tf(x)|,\]
    then an $L^2$ bound 
    \[\int_{B(0,1)}\int_{1}^{2}| \partial_{t}^{1/2+\epsilon}(d_x)_*f(t)|^2 \, dt \, dx\lesssim \|f\|_{L^2}^2\]
    would imply that ${\mathcal{S}}$ is bounded on $L^2$ by Sobolev's embedding. However, it is known that ${\mathcal{S}}$ is not bounded on $L^2$ in dimension $2$.
\end{remark}

As an application of Theorem \ref{thm: nonemptyinteriorforkstars}, we get Corollary \ref{cor:interiorforevennecklaces}, which brings some new information on the nonempty interior of pinned even necklaces.

\begin{proof}[Proof of Corollary \ref{cor:interiorforevennecklaces}]

First, let us discuss the case $l=2$ to get some intuition. Let $E\subset \R^n$ with $\dim(E)>\alpha^{\circ}(n,2)$. Take an $s$-Frostman measure on $E$ with $s>\alpha^{\circ}(n,2)$. Take $E_1,E_2,E_3,E_4$ separated pieces of $E$ (that is $|x_i-x_j|\gtrsim 1$ for all $i\neq j$ and $x_i\in E_i$) with $\mu(E_j)>0$ for all $j=1,2,3,4$, and with the transversality condition on $E_1$ and $E_3$. 

From our result for $2$-stars we know that for $\mu_{E_1}\times \mu_{E_3}$ almost everywhere $(x_1,x_3)\in E_1\times E_3$ one has that
$\Delta_{x_1,x_3}^{2-\text{star}}(E_2)$ has nonempty interior in $\R^2$. Moreover, the same is true for $E_2$ replaced by $E_4$. 

By intersecting a couple of subsets of $E_1\times E_3$ with zero measure complement with respect to the product measure $\mu_{E_1}\times \mu_{E_3}$, we get that for $\mu_{E_1}\times \mu_{E_3}$ almost everywhere $(x_1,x_3)\in E_1\times E_3$ both $ \Delta_{x_1,x_3}^{2-\text{star}}(E_2)$ 
    and $ \Delta_{x_1,x_3}^{2-\text{star}}(E_4)$ have nonempty interior in $\R^2$. For any such good pairs $(x_1,x_3)$, we note that up to a permutation of coordinates,
    \begin{align*}
      \Delta_{x_1,x_3}^{C_4}(E_2,E_4):=&\{(|x_1-x_2|,|x_2-x_3|,|x_3-x_4|,|x_4-x_1|)\colon x_2\in E_2,x_4\in E_4\}\\
      =&\Delta_{x_1,x_3}^{2-\text{star}}(E_2)\times \Delta_{x_1,x_3}^{2-\text{star}}(E_4).  
    \end{align*}
    Since a product of two sets of nonempty interior in $\R^2$ will contain a $4$-dimensional nontrivial open cube, we conclude that $ \Delta_{x_1,x_3}^{C_4,\,2\,\text{pins}}(E_2,E_4)$ (and so $\Delta_{x_1,x_3}^{C_4,\,2\,\text{pins}}(E)$) has nonempty interior in $\R^4$.

Next, let us move to the general case $l\geq 2$. Similar to the case $l=2$, if we have a result for a chain in $2l-2$ edges with a pin in every other vertex starting from the endpoint of the chain, we can easily ``glue" it with a $2$-star to get the pinned $2l$-cycle of interest. So we are reduced to proving a result about even chains of length $2m$, $m\geq 1$.
   \begin{center}
\begin{tikzpicture}[scale=1.05, every node/.style={font=\small}]
  \definecolor{pinmagenta}{RGB}{190,80,210}
  \tikzset{pin/.style={circle, fill=pinmagenta, inner sep=2pt},
           free/.style={circle, fill=black, inner sep=2pt},
           edgeblue/.style={line width=1.5pt, blue!75},
           edgegray/.style={line width=1.5pt, gray!70}}

  \coordinate (v1) at (0,0);
  \coordinate (v2) at (1.1,0);
  \coordinate (v3) at (2.2,0);
  \coordinate (vmm1) at (5.2,0);
  \coordinate (vm) at (6.3,0);
  \coordinate (vmp1) at (7.4,0);

  \coordinate (vmp2) at (0.55,1.05);
  \coordinate (vmp3) at (1.65,1.05);
  \coordinate (v2m) at (5.75,1.05);
  \coordinate (v2mp1) at (6.85,1.05);

  \draw[edgeblue] (v1)--(vmp2)--(v2)--(vmp3)--(v3);
  \draw[edgeblue, dotted, line width=1.2pt] (v3)--(vmm1);
  \draw[edgeblue] (vmm1)--(v2m)--(vm);
  \draw[edgegray] (vm)--(v2mp1)--(vmp1);

  \node[pin, label=below:{\textcolor{pinmagenta}{$v_1$}}] at (v1) {};
  \node[pin, label=below:{\textcolor{pinmagenta}{$v_2$}}] at (v2) {};
  \node[pin, label=below:{\textcolor{pinmagenta}{$v_3$}}] at (v3) {};
  \node[pin, label=below:{\textcolor{pinmagenta}{$v_{m-1}$}}] at (vmm1) {};
  \node[pin, label=below:{\textcolor{pinmagenta}{$v_m$}}] at (vm) {};
  \node[pin, label=below:{\textcolor{pinmagenta}{$v_{m+1}$}}] at (vmp1) {};

  \node[free, label=above:{$v_{m+2}$}] at (vmp2) {};
  \node[free, label=above:{$v_{m+3}$}] at (vmp3) {};
  \node[free, label=above:{$v_{2m}$}] at (v2m) {};
  \node[free, label=above:{$v_{2m+1}$}] at (v2mp1) {};
\end{tikzpicture}
\end{center}

More precisely, let $\text{Ch}_{2m}$ be a $2m$-chain ($2m$ edges). Pick the set of pins $\mathcal{P}=\{v_1,v_2,\ldots,v_{m+1}\}$
as in the picture; that is, every other vertex is being pinned.

We want to prove by induction that if
\[
  E\subset \R^n,\,  \dim(E)>\alpha^{\circ}(n,2)
\]
and $E_1,\ldots,E_{2m+1}$ are separated subsets of $E$ with
$\mu(E_i)>0$ for all $i$ such that every pair $(E_i,E_{i+1}),1\leq i\leq m$ is transverse, then for
\[
    \prod_{i=1}^{m+1}\mu_{E_i}\text{-a.e.} \,(x_1,x_2,\dots, x_{m+1})
\]
one has
\[
   \text{int}  \left(
\Delta^{\text{Ch}_{2m},P}_{x_1,\ldots,x_{m+1}}
        (E_{m+2},\ldots,E_{2m+1})
    \right)\neq \emptyset.
\]

We do this using induction on $2m$. If \(m=1\), this is the \(2\)-star case of Theorem~\ref{thm: nonemptyinteriorforkstars}. Assume $m\geq 2$ and the statement is true for $m-1$. Then let
\[
    G^{(m)}\subset E_1\times E_2\times\cdots\times E_m
\]
be a subset with product measure
\[
    \left(\prod_{i=1}^{m}\mu_{E_i}\right)\left(\prod_{i=1}^{m} E_i\setminus G^{(m)}\right)=0
\]
and such that, for all $(x_1,x_2,\ldots,x_m)\in G^{(m)}$, one has
\[
    \text{int}\left(
        \Delta^{\text{Ch}_{2m-2},\widetilde{\mc{P}}}_{x_1,\ldots,x_m}
    (E_{m+2},E_{m+3},\ldots,E_{2m})
    \right)\neq \emptyset,
\]
where $
    \widetilde{\mc{P}}=\{v_1,v_2,\ldots,v_m\}$. From the case $m=1$, one can also find a subset
\[
    G^{(2)}\subset E_m\times E_{m+1}, \text{ with }(\mu_{E_m}\times \mu_{E_{m+1}})((E_{m}\times E_{m+1})\setminus G^{(2)})=0
\]
such that, for all $(z_m,z_{m+1})\in G^{(2)}$, one has
\[
    \text{int}\left(
        \Delta^{2\text{-star}}_{z_m,z_{m+1}}
        (E_{2m+1})
    \right)\neq \emptyset.
\]

We claim that, if one defines
\[
    G^{(m+1)}:=\bigl\{(x_1,x_2,\ldots,x_{m+1})\in \prod_{i=1}^{m+1}E_i:
    (x_1,\ldots,x_m)\in G^{(m)}
    \text{ and }(x_m,x_{m+1})\in G^{(2)}\bigr\},
\]
then
\[
    \left(\prod_{i=1}^{m+1}\mu_{E_i}\right)\left(\prod_{i=1}^{m+1} E_i\setminus G^{(m+1)}\right)=0.
\]
Indeed,
\begin{align*}
&\left(\prod_{i=1}^{m+1}\mu_{E_i}\right)
\left(\left(\prod_{i=1}^{m+1}E_i\right)\setminus G^{(m+1)}\right) \\
&\qquad\leq
\left(\prod_{i=1}^{m+1}\mu_{E_i}\right)
\left((G^{(m)})^c\times E_{m+1}\right) \\
&\qquad\quad+
\left(\prod_{i=1}^{m+1}\mu_{E_i}\right)
\left(\left(\prod_{i=1}^{m-1}E_i\right)\times (G^{(2)})^c\right)=0. 
\end{align*}

For every $(x_1,\ldots,x_{m+1})\in G^{(m+1)}$, the pinned distance set associated to $\mathrm{Ch}_{2m}$ can be identified, up to a permutation of coordinates, with the product of the pinned distance set associated to $\mathrm{Ch}_{2m-2}$ and the pinned distance set associated to the final $2$-star. Since both factors have nonempty interior, and products of open sets are open, it follows that
\[
\Delta^{\mathrm{Ch}_{2m},P}_{x_1,\ldots,x_{m+1}}
(E_{m+2},\ldots,E_{2m+1})
\]
has nonempty interior in $\R^{2m}$. This completes the induction. Applying the resulting chain statement to the decomposition of $C_{2l}$ described above yields the desired conclusion for even cycles.
\end{proof}

Next, we move to dimension estimates for $k$-stars. Given a set $E\subset \R^n$ whose dimension is not large enough to apply Theorem \ref{thm:main}, that is $\dim(E)\leq \alpha_{+}(n,k)$, we investigate what dimensional lower bounds one can still get.

\begin{theorem}[Dimension lower bound for \(k\)-stars]\label{thm:kstar_dimension_lower}
Assume $1\leq k<n$. let \(E\subset \R^n\) be compact with
\[
\max\left\{\frac{n^2}{2n+1},k-1\right\}<\dim(E)\leq \alpha_{+}(n,k)
=
\frac{n^2+nk+k}{2n+1}.
\]
Set
\[
\delta(E):=
\frac{2n+1}{n+1}\dim(E)-\frac{n^2}{n+1}.
\]
Then, for every sufficiently small \(\epsilon>0\), there exists a Frostman measure
\(\mu_\epsilon\) on \(E\) such that there is an abundance of pins
\((x_1,\dots,x_k)\in E^k\) with respect to \(\mu_\epsilon\) satisfying
\[
\dim\bigl(\Delta^{k\text{-star}}_{x_1,\dots,x_k}(E)\bigr)
\geq
\delta(E)-\epsilon.
\]
\end{theorem}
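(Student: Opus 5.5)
Fix \(\alpha<\dim(E)\) close to \(\dim(E)\), an \(\alpha\)-Frostman measure \(\mu=\mu_\epsilon\) on \(E\), and the same localization as in the proof of Theorem~\ref{thm:pinned_thm}. That is, take separated compact pieces \(E_1,\dots,E_{k+1}\subset E\setminus\{0\}\) with \(\{E_1,\dots,E_k\}\) transverse, rescaled into \(\{1/2\le|x|\le1\}\). Lemma~\ref{lemma: transversalityreduction} applies because \(\alpha>k-1\). Set \(\mu_j=\mu_{E_j}\).

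The goal is an energy bound for the pinned measure \(\lambda_{\bm x}:=(d_{\bm x})_*\mu_{k+1}\) on \(\R^k\). Concretely, I would show that for every \(\delta<\delta(E)\) (after letting \(\alpha\uparrow\dim E\)),
\[
\int\int_{\R^k}|\widehat{\lambda_{\bm x}}(\bm\rho)|^2|\bm\rho|^{\delta-k}\,d\bm\rho\,d\bm\mu(\bm x)<\infty .
\]
Then \(I_\delta(\lambda_{\bm x})<\infty\) for \(\bm\mu\)-a.e.\ \(\bm x\), which gives \(\dim\Delta^{k\text{-star}}_{\bm x}(E)\ge\delta\) for an abundance of pins. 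The change of variables \(t_j=\Gamma_{x_j}(s_j)\) is a diffeomorphism with Jacobian \(\approx1\) on the relevant compact range: \(t_j\) is bounded above and below by separation, so \(W\approx1\). Since finiteness of \(\delta\)-energy (\(\delta<k\)) is preserved under bi-Lipschitz maps, it suffices to bound the energy of the measure \(\bm s\mapsto \mc A_{\Gamma_{\bm x}(\bm s)}\mu_{k+1}(\bm x)J_{\bm x}(\bm s)\). By the computation in Lemma~\ref{lemma: equalityreplacingliuidentity}, the Fourier transform of this measure at \(\bm r\) is \(\mc E\mu_{k+1}(-\sum r_jy_j)\).

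So the quantity to control is
\[
\int\int_{\R^k}|\mc E\mu_{k+1}(r_1y_1+\cdots+r_ky_k)|^2|\bm r|^{\delta-k}\,d\bm r\,d\bm\nu(\bm y).
\]
Exactly as in the proof of Theorem~\ref{thm: nonemptyinteriorforkstars}, Lemma~\ref{lem:weight_support} gives \(|\bm r|\approx|\sum r_jy_j|\). Since \(\delta-k<0\), we need the two-sided comparability here, and it holds because \(|r_jy_j|\approx|r_j|\). The expression then becomes \(\int|\mc E\mu_{k+1}(y)|^2|y|^{\delta-k}\,dw(y)\). The region \(|\bm r|\le1\) contributes a finite amount, since \(|\bm r|^{\delta-k}\) is integrable there and \(\mc E\mu_{k+1}\) is bounded.

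Next apply Theorem~\ref{thm:k_weight_frostman}: on \(A_{R_i}\) the weight is \(R_i^{-\alpha+k-1}w^i\), with \(w^i\) an \((\alpha+1)\)-Frostman measure. Then use Lemma~\ref{lem:stationary_phase} to replace \(\mu_{k+1}\) by \(\mathcal P_{\le 4R_i}\mu_{k+1}\) up to a rapidly decaying error, and apply Theorem~\ref{thm:du_zhang}. Each annulus contributes
\[
R_i^{\delta-k}R_i^{-\alpha+k-1}R_i^{\frac{\alpha+1}{n+1}+\epsilon}\|(\mu_{k+1})_{\le 4R_i}\|_{L^2}^2 .
\]
For an \(\alpha\)-Frostman measure, \(\|(\mu_{k+1})_{\le R}\|_2^2\lesssim R^{n-\alpha}\). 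Summing dyadically then requires
\[
\delta-\alpha-1+\tfrac{\alpha+1}{n+1}+n-\alpha<0,
\qquad\text{i.e.}\qquad
\delta<\frac{2n+1}{n+1}\alpha-\frac{n^2}{n+1}.
\]
Letting \(\alpha\to\dim(E)\) and choosing \(\delta=\delta(E)-\epsilon\) closes the argument. The lower bound \(\dim E>n^2/(2n+1)\) makes \(\delta(E)>0\), and \(\dim E\le\alpha_+\) makes \(\delta(E)\le k\), so energy estimates with \(\delta<k\) are the right tool. The \(\epsilon\)-dependence of \(\mu_\epsilon\) comes from choosing \(\alpha\) depending on \(\epsilon\).

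The main obstacle is the first step: justifying that the energy of \(\lambda_{\bm x}\) is controlled by the \(\bm s\)-side Fourier integral. This needs three things. First, the transform of the non-compactly-cut-off measure in \(\bm s\) must be exactly the extension given by Lemma~\ref{lemma: equalityreplacingliuidentity}; here one should mollify \(\mu_{k+1}\) as in Theorem~\ref{thm:pinned_thm} and pass to the limit using Fatou. Second, the Jacobian factor \(J_{\bm x}\), which is smooth and positive on the support, must be shown to be harmless for energies. Third, the singular weight \(|\bm r|^{\delta-k}\) near the origin must be handled. I would address the Jacobian by writing energy as \(\iint|\bm s-\bm s'|^{-\delta}\) and noting that bi-Lipschitz maps and bounded densities preserve it up to constants.
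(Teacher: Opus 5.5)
Your proposal is correct and follows essentially the same route as the paper. The shared steps are the energy integral of the pushed-forward measure in the $\bm s$-variables, its Fourier transform $\mc{E}\mu_{k+1}(-\sum_j r_jy_j)$, the comparability $|\bm r|\approx|\sum_j r_jy_j|$ from the weight-support lemma, the Frostman decomposition of $w$, and Du--Zhang on each dyadic annulus. Your exponent $\delta-k$ plays the role of the paper's $-2\beta$ and yields the same condition $\delta<\frac{2n+1}{n+1}\alpha-\frac{n^2}{n+1}$.

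The extra points you raise are details the paper leaves implicit: bi-Lipschitz invariance of energies, the integrable singularity on $|\bm r|\le 1$ (which uses $\delta>0$), and which direction of comparability is needed for a negative power. The mollification step is unnecessary, since the Fourier transform of $(\Gamma_{\bm x}^{-1}\circ d_{\bm x})_*\mu_{k+1}$ can be computed directly from the distance/dot-product identity.
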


\begin{proof}
 Let $\mu_\epsilon$ be an $\alpha$-Frostman measure on $E$ with $\alpha=\dim(E)-c\epsilon$ with $c>0$ to be chosen. Following the approach in the proof of \cref{thm: nonemptyinteriorforkstars}, we will check for which values of \(0\leq \beta<k/2\) one has that the measure
\[\bm{s} \mapsto \mc{A}_{\Gamma_{\bm{x}}(\bm{s})}\mu_{k+1}(\bm{x})J_{\bm{x}}(\bm{s})1_{(-1/4,\infty)^k}(\bm{s}) = (d_{\bm{x}})_*(\mu_{k+1})(\Gamma_{\bm{x}}(\bm{s}))J_{\bm{x}}(\bm{s})1_{(-1/4,\infty)^k}(\bm{s})\]
has support of dimension at least $k-2\beta$ in $\R^k$, for $\mu_{E_1}\times \dots \times \mu_{E_k}$ a.e. $\bm{x}$.

Then for such $\beta$,
\[
\dim(\Gamma_{\bm{x}}^{-1}\bigl(\Delta^{k\text{-star}}_{\bm{x}}(E_{k+1})\bigr))\ge k-2\beta
\quad\text{for }\prod_{i=1}^k\mu_{E_i}\text{-a.e. }\bm{x}=(x_1, \dots, x_k),
\]
and so
\[
\dim\bigl(\Delta^{k\text{-star}}_{\bm{x}}(E_{k+1})\bigr)\ge k-2\beta
\quad\text{for }\prod_{i=1}^k\mu_{E_i}\text{-a.e. }\bm{x}=(x_1, \dots, x_k).
\]

By the standard energy integral criterion and by taking the Fourier transform of the measure above, the question becomes for which $0\leq \beta<k/2$ one has 
\begin{equation}\label{eqn:hausdorff_dim_integral}
    \int\int_{\R^k}|\mc{E}(\mu_{k+1})(r_1\pi^{-1}(x_1)+\dots +r_k\pi^{-1}(x_k))|^2 \, |\bm{r}|^{-2\beta}d\bm{r} \, d\bm{\mu}(\bm{x}) < \infty.
\end{equation}

After Plancherel, \cref{eqn:hausdorff_dim_integral} reduces to estimating

\begin{align*}
&\int_{|\bm{r}|>1}\int |\mathcal E(\mu_{k+1})(r_1y_1+\cdots+r_ky_k)|^2|\bm{r}|^{-2\beta}\,d\bm\nu(\bm y)d\bm{r}\\
\lesssim &\sum_i R_i^{-\alpha+k-1}\int_{A_{R_i}}|\mathcal E(\mu_{k+1})(y)|^2|y|^{-2\beta}dw^i(y)\\
\lesssim &\sum_{i\ge0}R_i^{-\alpha+k-1-2\beta}\int_{A_{R_i}}|\mathcal E(\mu_{k+1})(y)|^2dw^i(y)\\
\lesssim_{\epsilon} & \sum_i R_i^{-\alpha+k-1-2\beta}
\Bigl(R_i^{\frac{\alpha+1}{n+1}+\epsilon}\|(\mu_{k+1})_{\leq R_i}\|_{L^2}^2\Bigr).
\end{align*}
For finiteness of the sum above, it is enough to choose \(\beta\) satisfying
\[
-\alpha+k-1-2\beta+\frac{\alpha+1}{n+1}<\alpha-n,
\]
that is,
\[
\,n+k-1+\frac1{n+1}-\alpha\left(\frac{2n+\text{1}}{n+1}\right)<2\beta.
\]
or equivalently,
$$k-2\beta<\frac{2n+1}{n+1}\alpha-\frac{n^2}{n+1}=\delta(E)-\frac{c\epsilon(2n+1)}{n+1}.$$
Choose $c$ sufficiently small such that \(\delta(E)-\epsilon<\delta(E)-c\frac{\epsilon(2n+1)}{n+1}\), then we may choose \(\beta\) satisfying the condition above and such that
\[
k-2\beta\geq \delta(E)-\epsilon.
\]
The statement is then true as long as $\varepsilon$ is sufficiently small.

\end{proof}

As an application of the dimensional estimates above for a $2$-star, we prove the following dimensional lower bounds for pinned triangles.

\begin{corollary}\label{cor: dim triangle}
Let $E\subset \R^n$ with $\alpha_{+}(n,1)<\dim(E)<\alpha_{+}(n,2)$. Consider 
$$\Delta^{triangle}_{x_1}(E)=\{(|x_1-x_2|,|x_2-x_3|,|x_3-x_1|)\colon x_2,x_3\in E\}\subset \R^3.$$
Then for each $\epsilon>0$, there exists $x_1\in E$ such that 
$$\dim(\Delta^{triangle}_{x_1}(E))\geq 1+\frac{2n+1}{n+1}\dim(E)-\frac{n^2}{n+1}-\epsilon.$$
\end{corollary}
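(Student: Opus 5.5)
We decompose the pinned triangle into an edge from the pin and a pinned $2$-star, and combine a pinned distance estimate with Theorem \ref{thm:kstar_dimension_lower} for $k=2$ via a Marstrand-type slicing lower bound.

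\textbf{Setup.} Fix $\alpha$ with $\alpha_+(n,1)<\alpha<\dim(E)$, close to $\dim(E)$, and an $\alpha$-Frostman measure $\mu$ on $E$. Using \cite[Lemma 3]{BIO2023} and Lemma \ref{lemma: transversalityreduction}, choose separated compact pieces $E_1,E_2,E_3\subset E\setminus\{0\}$ of positive $\mu$-measure with $\{E_1,E_2\}$ transverse, rescaled into the annulus $\{1/2\le|x|\le1\}$.

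\textbf{Step 1: the edge $x_1x_2$.} Since $\alpha>\alpha_+(n,1)$, Theorem \ref{thm:main} with $k=1$ gives that for $\mu_{E_1}$-a.e.\ $x_1$, the pushforward $(d_{x_1})_*\mu_{E_2}$ is in $L^2(\R)$, hence absolutely continuous.

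\textbf{Step 2: the $2$-star over the edge.} Apply Theorem \ref{thm:kstar_dimension_lower} with $k=2$; its hypotheses hold since $\dim(E)\le\alpha_+(n,2)$ and $\dim(E)>\alpha_+(n,1)>\max\{n^2/(2n+1),1\}$. Its proof gives that for $\mu_{E_1}\times\mu_{E_2}$-a.e.\ $(x_1,x_2)$, the measure $(d_{x_1,x_2})_*\mu_{E_3}$ has finite $(\delta(E)-\epsilon)$-energy, so $\dim\Delta^{2\text{-star}}_{x_1,x_2}(E_3)\ge\delta(E)-\epsilon$. By Fubini, fix $x_1$ such that Step 1 holds and, for $\mu_{E_2}$-a.e.\ $x_2$, the $2$-star bound holds. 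Retaining the energy statement, rather than just a dimension bound at each fiber, is what lets us integrate the fibers measurably.

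\textbf{Step 3: assembling the triangle set.} Write $\Delta^{triangle}_{x_1}(E)\supset\{(|x_1-x_2|,|x_2-x_3|,|x_3-x_1|)\}$, and note that after a coordinate permutation it contains the set
\[
\bigcup_{x_2\in E_2}\{|x_1-x_2|\}\times\Delta^{2\text{-star}}_{x_1,x_2}(E_3)\quad\text{(reordering coordinates to } (|x_1-x_2|,|x_3-x_1|,|x_3-x_2|)).
\]
Define the measure $\lambda$ on $\R^3$ as the pushforward of $\mu_{E_2}\times\mu_{E_3}$ under $(x_2,x_3)\mapsto(|x_1-x_2|,|x_1-x_3|,|x_2-x_3|)$. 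Its first marginal is $(d_{x_1})_*\mu_{E_2}$, which is absolutely continuous by Step 1, and its conditional measures over $t=|x_1-x_2|$ are averages over the sphere $\{x_2:|x_1-x_2|=t\}$ of the $2$-star measures.

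\textbf{Step 4: the slicing bound.} We use the standard fact that if a measure has a $1$-dimensional (absolutely continuous) marginal and its fibers have energy dimension at least $s$, then its support has dimension at least $1+s$. Concretely, we bound the $(1+s)$-energy of $\lambda$, with $s=\delta(E)-\epsilon$, by integrating the fiber energies against the marginal density. The fiber estimate should come from the energy bound in the proof of Theorem \ref{thm:kstar_dimension_lower}, averaged over $x_2$ on spheres.

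\textbf{Main obstacle.} The hard step is the cross terms in the energy: pairs of fibers with nearby $t$ but different $x_2$. The $2$-star energy bound controls pairs with the same $x_2$ only. I would first try a Frostman-type (rather than energy-type) argument: show that $\lambda(B)\lesssim r^{1+s-\epsilon}$ for most small balls $B$ of radius $r$. This follows by covering, using the $L^2$ density from Step 1 to bound the $x_2$-mass at scale $r$ in the first coordinate by $r$ (via Cauchy--Schwarz, losing only $\epsilon$ after restricting to a large-mass subset), and the fiber Frostman bound $r^{s}$ on a positive-measure set of good $x_2$ obtained by Egorov. A mass distribution argument on $\lambda$ restricted to the good set then yields the stated dimension bound.
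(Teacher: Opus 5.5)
Your Steps 1--2 agree with the paper. The paper also uses the $k=1$ case of Theorem~\ref{thm:main} to make $(d_{x_1})_*\mu_{E_2}$ an $L^2$ function, and Theorem~\ref{thm:kstar_dimension_lower} with $k=2$ to get a positive-measure set $G^{(2)}$ of pairs $(x_1,x_2)$ whose $2$-star sets over $E_3$ have dimension at least $\gamma=\delta(E)-\epsilon$.

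You diverge in how the fibers are assembled. The paper never builds a measure on the triangle set:
\begin{itemize}
\item It fixes $x_1$ with $\mu_{E_2}(E_2(x_1))>0$, where $E_2(x_1)$ is the $x_1$-section of $G^{(2)}$. By absolute continuity, $\Delta_{x_1}(E_2(x_1))$ then has positive Lebesgue measure.
\item For each $t_1$ in this set it picks any $x_2(t_1)\in E_2(x_1)$ with $|x_1-x_2(t_1)|=t_1$. The fiber of $\Delta^{triangle}_{x_1}(E)$ over $t_1$ then contains a copy of $\Delta^{2\text{-star}}_{x_1,x_2(t_1)}(E_3)$.
\item It concludes with the slicing inequality of Theorem A.2 in \cite{OT2020}, a Fubini-type inequality for Hausdorff measures. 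If every fiber over a set $T$ of first coordinates has $\mathcal{H}^{\gamma-\varepsilon}$-measure at least $c$, then $\mathcal{H}^{1+\gamma-\varepsilon}$ of the set is $\gtrsim c\,\mathcal{H}^1(T)$.
\end{itemize}
That hypothesis is pointwise in $t_1$, so only the stated fiber dimension bound is needed. There are no energies, no measurable selection of $x_2(t_1)$, and no uniformity in $x_2$. The cross terms you flag as the main obstacle never arise.

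Your Frostman-measure alternative can be completed, but two of the mechanisms you name for Step 4 fail as stated.
\begin{itemize}
\item Cauchy--Schwarz against the $L^2$ density $g$ of $(d_{x_1})_*\mu_{E_2}$ gives only $\int_I g\leq |I|^{1/2}\|g\|_{L^2}\sim r^{1/2}$. That costs $1/2$ in the final dimension, not $\epsilon$. Instead, restrict $x_2$ to $d_{x_1}^{-1}(\{g\leq M\})$. This set has positive $\mu_{E_2}$-mass for large $M$ and gives the mass bound $2Mr$ with no loss; only absolute continuity is used.
\item Finite $s$-energy of $\rho_{x_2}:=(d_{x_1,x_2})_*\mu_{E_3}$ does not imply $\rho_{x_2}(B)\lesssim r^s$ for all balls. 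Restricting the set of $x_2$ alone, by Egorov or otherwise, cannot supply this in general. You must restrict each fiber measure to $\{\bm t:\int|\bm t-\bm t'|^{-s}\,d\rho_{x_2}(\bm t')\leq M\}$. That means restricting $\mu_{E_2}\times\mu_{E_3}$ to a set defined jointly in $(x_2,x_3)$, which is measurable because the potential is lower semicontinuous in $(x_2,x_3)$. Then check by monotone convergence that positive mass survives.
\end{itemize}
With both restrictions you get $\lambda(B)\lesssim r^{1+s}$ for every ball, not just for most balls, and the mass distribution principle finishes.

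So your route works but is heavier. It needs the energy bound from inside the proof of Theorem~\ref{thm:kstar_dimension_lower}, transferred from $\bm s$- to $\bm t$-coordinates via the bi-Lipschitz map $\Gamma_{\bm x}$. The paper uses only that theorem's stated conclusion.
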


   \begin{proof}
Let \(E_1,E_2,E_3\) be compact separated sets equipped with measures \(\mu_{E_i}\) be as before, and $E_1$ and $E_2$ transverse (we remark that $\dim(E)>\alpha_{+}(n,1)=\frac{n^2+n+1}{2n+1}>\max\{k-1,\frac{n^2}{2n+1}\}$ for $k=2$).

Start by shrinking \(E_1\) to a compact subset \(E_1' \subset E_1\) such that
\[
\mu_{E_1}(E_1')>0
\]
and
\[
(d_{x_1})_{*}(\mu_{E_2})\in L^2(\R),
\qquad \forall x_1\in E_1'.
\]
That is possible since
\[
\dim E > \alpha_{+}(n,1).
\]

Applying what we know for \(2\)-stars, we can find a subset
\[
G^{(2)} \subset E_1' \times E_2
\]
such that
\[
\mu_{E_1} \times \mu_{E_2}(G^{(2)})>0
\]
and, for all \((x_1,x_2)\in G^{(2)}\),
\[
\dim\bigl(\Delta^{2\text{-star}}_{x_1,x_2}(E_3)\bigr)
\geq \gamma
:= \frac{2n+1}{n+1}\dim E - \frac{n^2}{n+1}-\epsilon.
\]

Since \(\mu_{E_1}\times \mu_{E_2}(G^{(2)})>0\), there exists \(E_1''\subset E_1\), compact,
such that
\[
\mu_{E_1}(E_1'')>0
\]
and
\[
\mu_2\bigl(E_2(x_1):=\{x_2\in E_2:(x_1,x_2)\in G^{(2)}\}\bigr)
\gtrsim c\,\mu(E_2)
\qquad \forall x_1\in E_1''.
\]

From the \(L^2\) assumption
\[
(d^{x_1})_{*}(\mu_{E_2})\in L^2,
\]
it follows that we still have
\[
\mathcal{L}^1\bigl(\Delta_{x_1}(E_2(x_1))\bigr)>0.
\]

We claim that
\[
\dim\bigl(\Delta^{triangle}_{x_1}(E)\bigr)\ge 1+\gamma
\qquad \forall x_1\in E_1''.
\]

Indeed, by Theorem A.2 in \cite{OT2020}, for all \(\varepsilon>0\) sufficiently small,
\[
\mathcal{H}^{1+\gamma-\varepsilon}
\bigl(\Delta^{triangle}_{x_1}(E)\bigr)
\gtrsim
c\mathcal{H}^1\bigl(\Delta_{x_1}(E_2(x_1))\bigr)>0
\]
as long as we know that, for all \(t_1\in \Delta_{x_1}(E_2(x_1))\),
\[
\mathcal{H}^{\gamma-\varepsilon}
\bigl(\{(t_1,t_2,t_3):(t_1,t_2,t_3)\in
\Delta^{triangle}_{x_1}(E)\}\bigr)
\geq c.
\]

To check this, take \(x_2(t_1)\in E_2(x_1)\) such that
\[
|x_1-x_2(t_1)|=t_1.
\]
Then
\[
\mathcal{H}^{\gamma-\varepsilon}
\bigl(\{(t_1,t_2,t_3):(t_1,t_2,t_3)\in
\Delta^{triangle}_{x_1}(E)\}\bigr)
\ge
\mathcal{H}^{\gamma-\varepsilon}
\bigl(\Delta^{2\text{-star}}_{x_1,x_2(t_1)}(E_3)\bigr).
\]

Since
\[
\dim\bigl(\Delta^{2\text{-star}}_{x_1,x_2(t_1)}(E_3)\bigr)\geq\gamma,
\]
the proof follows.
\end{proof}

\appendix

\section{Improvement of pinned nonempty interior for distances}\label{appendixA:nonemptyviaDZ}

In this appendix, we observe that estimates of Du--Zhang \cite{DZ2019} yield improved thresholds for pinned nonempty interior of distance sets when \(n\geq 4\), improving the best known thresholds from \cite{BFOP2026} for $n\geq 4$. That is the content of Theorem \ref{thm: nonemptyinteriorfor1star}, which we now prove.

\begin{proof}[Proof of Theorem \ref{thm: nonemptyinteriorfor1star}]
    We provide a sketch of the proof since most of the arguments are standard. It is enough to show that for an $\alpha$-Frostman measure $\mu_1$ in $B^n(0,1)$ with support $E_1$, there exists $\epsilon>0$ such that for all smooth functions $f$ supported on $B^n(0, 1)$ whose support is a positive distance from $E_1$, one has 
    \[\int_{E_1}\int_{1}^{2}| \partial_{t}^{1/2+\epsilon}\mc{A}_tf(x_1)|^2 \, dt \, d\mu_1(x_1)\lesssim \int_{\R^n} |\xi|^{\alpha - n - \epsilon}|\widehat{f}(\xi)|^2 \, d\xi\]
    if 
    \[\alpha > \frac{n(n + 1)}{2n - 1} = \frac{n}{2} + \frac{3}{4} + \frac{3}{4(2n-1)}.\]
    Without loss of generality, assume that $\text{dist}(\text{supp} \, f, E_1) > 5/4$. For $j \geq 1$, let $\mc{P}_j$ be the Littlewood-Paley projection onto frequencies $\approx 2^j$ and for $j = 0$, let $\mc{P}_0$ be the projection onto frequencies in $B^n(0, 1)$. Write $f_j := \mc{P}_jf$. \par
    We first split the left-hand side into the Littlewood-Paley pieces:
    \[\left(\int_{E_1}\int_{1}^{2}| \partial_{t}^{1/2+\epsilon}\mc{A}_tf(x_1)|^2dt \, d\mu_1(x_1)\right)^{1/2}  \leq \sum_{j \geq 0}\left(\int_{E_1}\int_{1}^{2}| \partial_{t}^{1/2+\epsilon}\mc{A}_tf_j(x_1)|^2dt \, d\mu_1(x_1)\right)^{1/2}.\]
    The idea is that since $f_j$ has frequencies in $\{|\xi| \approx 2^j\}$, then so does the function $t \mapsto \mc{A}_tf_j(x_1)$. We will take the Fourier transform in $(x_1, t)$ of $(x_1, t) \mapsto \mc{A}_tf(x_1)$. In the \(x_1\)-variable, since \(\mc A_t f=f*\sigma_t\) with \(\sigma_t\) denoting unnormalized surface measure on \(tS^{n-1}\), we have
\[
\widehat{\mc A_t f}(\xi)
=
t^{n-1}\widehat{\sigma}(t\xi)\widehat f(\xi).
\]
Since \(t\in[1,2]\), the factor \(t^{n-1}\) is harmless in the estimates below.

    Expressing $\widehat{\sigma}$ in terms of Bessel functions gives 
    \[\widehat{\sigma}(t\xi) = C \int_{-1}^{1}e^{it|\xi|s}(1 - s^2)^{(n - 3) / 2} \, ds.\]
    
    Hence, the Fourier transform in $t$ of $\widehat{\sigma}(t\xi)$ is
    \[C (1 - (s / |\xi|)^2)_+^{(n - 3)/2}|\xi|^{-1} = C (|\xi|^2 - s^2)_{+}^{(n - 3) / 2}|\xi|^{-n + 2}.\]
    
    Therefore the Fourier transform in $(x_1, t)$ of $\mc{A}_tf_j(x_1)$ is
    \[C (|\xi|^2 - s^2)_{+}^{(n - 3) / 2}|\xi|^{-n + 2}\widehat{f_j}(\xi).\]
    By inverting the Fourier transform in $\xi$, the Fourier transform of $t \mapsto \mc{A}_tf_j(x)$ is 
    \[s \mapsto C \int_{|\xi| \geq s} (|\xi|^2 - s^2)_{+}^{(n - 3) / 2}|\xi|^{-n + 2}\widehat{f_j}(\xi) e^{2\pi i x \cdot \xi} \, d\xi.\]
    This function is nonzero only if $s \leq 2^{j + 1}$. Therefore by applying Plancherel in $t$, we can replace the derivative with $2^{j(\frac{1}{2} + \epsilon)}$
    \[\int_{E_1}\int_{1}^{2}| \partial_{t}^{1/2+\epsilon}\mc{A}_tf_j(x_1)|^2 \, dt \, d\mu_1(x_1) \lesssim 2^{j(1+2\epsilon)}\int_{E_1}\int |\mc{A}_tf_j(x_1)|^2 \, dt \, d\mu_1(x_1).\]
    Since $f_j$ is essentially supported in the $2^{-j}$-neighborhood of $\text{supp} \, f$, which has positive distance from $E_1$, we can assume the integration in $t$ is over the interval $[1, 2]$. 
    Since \(t\in[1,2]\), the normalized and unnormalized spherical averages differ by harmless constants. By applying Liu's identity and Du-Zhang \cite{DZ2019}, we have that
    \begin{align*}
        \int_{E_1}\int_{1}^{2}| \mc{A}_tf_j(x_1)|^2 \, dt \, d\mu_1(x_1) & \approx \int_{E_1}\int_{0}^{\infty}|f_j * \widehat{\sigma}_r(x_1)|^2 r^{n - 1} \, dr \, d\mu_1(x_1) \\
        & = \int_{r \approx 2^{j}}\int_{E_1}|f_j * \widehat{\sigma}_r(x_1)|^2 d\mu_1(x_1) \, r^{n - 1} \, dr \\ 
        & \lesssim \int_{r \approx 2^{j}}\|\widehat{f}_j\|_{L^2(rS^{n - 1})}^2r^{-\alpha + \frac{\alpha}{n}} \, r^{n - 1} \, dr \\ 
        & \approx \int_{|\xi| \approx 2^j}|\widehat{f}(\xi)|^2|\xi| ^{-\alpha + \frac{\alpha}{n}} \, d\xi.
    \end{align*}
    Therefore 
    \begin{align*}
        \int_{E_1}\int_{1}^{2}| \partial_{t}^{1/2+\epsilon}\mc{A}_tf_j(x_1)|^2 \, dt \, d\mu_1(x_1) &  \lesssim \int_{|\xi| \approx 2^j}|\widehat{f}(\xi)|^2|\xi|^{-\alpha + \frac{\alpha}{n} + 1 + 2\epsilon} \, d\xi \\
        & \lesssim 2^{-\epsilon j}\int|\widehat{f}(\xi)|^2|\xi|^{-\alpha + \frac{\alpha}{n} + 1 + 3\epsilon} \, d\xi \\
        & \lesssim 2^{-\epsilon j}\int|\widehat{f}(\xi)|^2|\xi|^{\alpha - n - \epsilon} \, d\xi \\
    \end{align*}
    since 
    \[-\alpha + \frac{\alpha}{n} + 1 + 4\epsilon < \alpha - n\] if $\alpha > n(n + 1) / (2n - 1)$ and $\epsilon$ is small enough. Taking square roots and summing in \(j\) gives the desired estimate.

    Applying this estimate to smooth approximations of a Frostman measure supported on the unpinned set and passing to the limit gives that the corresponding pinned distance measure has a density in \(H^{1/2+\epsilon}(\R)\), hence a continuous representative. Since the measure is nonzero, its support contains a nontrivial interval.
\end{proof}

\section{Application to Discrete Variants of \texorpdfstring{$k$}{k}-stars}\label{appendixB:discrete}

Inspired by the proof of Corollary 1.5 in \cite{GIOW20}, we will prove the following corollary concerning finite point sets in $\mathbb{R}^n$. To our knowledge, this seems to be the first $k$-star result in the discrete setting for $k\geq 2$ and may be of independent interest. 

\begin{corollary}[Discrete pinned \(k\)-stars]
Let \(1\le k<n\), and let \(P\subset [0,1]^n\) be a set of \(N\) points with mutual separation
\[
|p-p'|\gtrsim N^{-1/n},\qquad p\neq p'.
\]
Then for every \(\varepsilon>0\), there exist pins \(x_1,\dots,x_k\in P\) such that
\[
\bigl|\Delta^{k\text{-star}}_{x_1,\dots,x_k}(P)\bigr|
\gtrsim_\varepsilon
N^{\frac{k}{\alpha_+(n,k)}-\varepsilon},
\]
where
\[
\alpha_+(n,k)=\frac{n^2+nk+k}{2n+1}.
\]
\end{corollary}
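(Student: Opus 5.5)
We follow the discretization scheme of \cite[Corollary 1.5]{GIOW20}: thicken $P$ into a set of controlled Frostman dimension, feed the resulting measure into the $L^2$ machinery behind Theorem~\ref{thm:main}, and convert the $L^2$ bound into a counting bound by Cauchy--Schwarz. Set $r:=N^{-1/n}$ and $E_r:=\bigcup_{p\in P}B(p,r)$, and let $\mu$ be the normalized Lebesgue measure on $E_r$, so each ball carries mass $\approx N^{-1}$. We will choose an exponent $\alpha$ slightly above $\alpha_+(n,k)$ and use $\mu$ at the single scale $r$.

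\textbf{Step 1 (Frostman-type bound).} Separation gives $\mu(B(x,\rho))\lesssim N^{-1}\max(1,\#\{p\in P: |p-x|\le 2\rho\})$, and a ball of radius $\rho\ge r$ contains $\lesssim(\rho/r)^n$ points. This is not an $\alpha$-Frostman bound with constant $O(1)$. We therefore run the argument of Theorem~\ref{thm:pinned_thm} and Theorem~\ref{thm:k_weight_frostman} while tracking the Frostman constant $C_\alpha(\mu)=\sup_{x,\rho}\mu(B(x,\rho))\rho^{-\alpha}$. For $\rho\le r$ the bound is $N^{-1}(\rho/r)^n\rho^{-\alpha}\le N^{-1}r^{-\alpha}=N^{\alpha/n-1}$, and for $\rho\ge r$ it is at most $N^{-1}\rho^{-\alpha}\min(N,(\rho/r)^n)\lesssim N^{\alpha/n-1}$ when $\alpha\le n$. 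Hence $C_\alpha(\mu)\lesssim N^{\alpha/n-1}$. As in the proof of Theorem~\ref{thm:pinned_thm}, we extract pieces $E_1,\dots,E_{k+1}$ that are separated, transverse, and have comparable mass. For a finite set this step is a pigeonholing argument on a grid of cubes, replacing Lemma~\ref{lemma: transversalityreduction}, and it loses only $N^{\varepsilon}$ or constants.

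\textbf{Step 2 (quantitative $L^2$ bound).} Apply Theorem~\ref{thm:parab_extension} and the decomposition of Theorem~\ref{thm:k_weight_frostman}. Every constant is now multiplied by powers of $C_\alpha$: each $w_j$ picks up one factor, the frequency side $\int|\xi|^{-\beta}|\widehat{\mu_{k+1}}|^2$ with $\beta>n-\alpha$ contributes one more, and Du--Zhang (Theorem~\ref{thm:du_zhang}) is homogeneous in the Frostman constant of $\nu$. This gives
\[
\int\|(d_{\bm x})_*\mu_{k+1}\|_{L^2(\R^k)}^2\,d\bm\mu(\bm x)\lesssim_\varepsilon N^{C(\alpha/n-1)+\varepsilon}.
\]
The frequency sum only needs to run up to $|\xi|\lesssim N^{1/n}$, since $\mu$ is smooth at scale $r$. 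The main obstacle is this bookkeeping: identifying the exact power of $C_\alpha$ and optimizing the choice of $\alpha$ so that the final exponent is $k/\alpha_+-\varepsilon$. My first attempt avoids global Frostman constants altogether. Instead, I would rescale so that $\mu$ is a genuine $\alpha$-dimensional measure up to scale $r$, use the truncated sum over $R_i\le N^{1/n}$ in the proof of Theorem~\ref{thm:pinned_thm}, and take $\alpha=\alpha_+(n,k)+\varepsilon$, so that the geometric series is summable with at most a logarithmic or $N^{\varepsilon}$ loss. The target is the bound $\int\|(d_{\bm x})_*\mu_{k+1}\|_2^2\,d\bm\mu\lesssim N^{\varepsilon}\cdot N^{k/n\,(\cdots)}$, whose exponent must be matched against $k/\alpha_+$.

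\textbf{Step 3 (counting).} Fix a good pin tuple $\bm x$, choosing $x_j\in P$ near the centres, with $\|(d_{\bm x})_*\mu_{k+1}\|_2^2\le M$. The measure $(d_{\bm x})_*\mu_{k+1}$ is supported in the $O(r)$-neighborhood of $\Delta^{k\text{-star}}_{\bm x}(P)$, a set of Lebesgue measure at most $|\Delta|\,r^k$. Cauchy--Schwarz then gives $1\lesssim M\,|\Delta|\,r^k$, so
\[
|\Delta^{k\text{-star}}_{\bm x}(P)|\gtrsim N^{k/n}M^{-1}.
\]
Moving the pins from centres of balls to points of $P$ perturbs distances by $O(r)$, which is absorbed into the neighborhood. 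Inserting the bound on $M$ from Step 2 and optimizing the scale gives the exponent $k/\alpha_+(n,k)-\varepsilon$. If the direct optimization falls short, the fallback is to apply the whole argument at a coarser scale $\rho=N^{-1/\alpha}$ with $\alpha\approx\alpha_+$. At that scale the thickened set behaves like an $\alpha$-Frostman set with $O(1)$ constant, which gives $M\lesssim N^{\varepsilon}$, and the Cauchy--Schwarz count then yields $|\Delta|\gtrsim\rho^{-k}=N^{k/\alpha_+-\varepsilon}$. This is exactly the claimed exponent, so I would adopt this coarse-scale version from the start.
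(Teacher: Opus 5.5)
Your final (``coarse-scale'') argument is the paper's proof. You thicken $P$ at radius $N^{-1/s}$ with $s$ slightly above $\alpha_+(n,k)$, check that the normalized measure is $s$-Frostman with constant independent of $N$, and run the quantitative proof of Theorem~\ref{thm:main} to get pins with an $O(1)$ $L^2$ bound. You then move the pins into $P$ and count via the $CN^{-1/s}$-neighborhood of $\Delta^{k\text{-star}}_{\bm x}(P)$; your Cauchy--Schwarz step is just the quantitative form of the paper's lower bound $\mathcal{L}^k\gtrsim 1$ for the thickened set.

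Two corrections are needed. First, $N^{-1/\alpha}$ is a \emph{finer} scale than $N^{-1/n}$ (since $\alpha<n$), not a coarser one. Second, the Step 1--2 detour is unnecessary and mistaken: at scale $N^{-1/n}$ the normalized measure has bounded density and is already $\alpha$-Frostman with $O(1)$ constant for every $\alpha\le n$. Its only defect is that the count then yields just $N^{k/n}$, which no bookkeeping of Frostman constants can improve.
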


\begin{proof}
Fix \(\varepsilon>0\). Choose \(s\) with \(\alpha_+(n,k)<s<n\) and sufficiently close to \(\alpha_+(n,k)\) so that
\[
\frac{k}{s}>\frac{k}{\alpha_+(n,k)}-\varepsilon.
\]
Define the \(N^{-1/s}\)-thickening of \(P\) by
\[
P_{N^{-1/s}}:=\bigcup_{p\in P}B(p,N^{-1/s}),
\]
and define the normalized measure
\[
d\mu_P^s(x)
=
N^{-1}N^{n/s}
\sum_{p\in P}\chi_{B(p,N^{-1/s})}(x)\,dx.
\]

We claim that \(\mu_P^s\) is an \(s\)-Frostman measure with constant independent of \(N\). Indeed, let \(B(x,r)\subset\R^n\). There are a few different possibilities for $r>0$.

If \(0<r\leq N^{-1/s}\), then 
\[
\mu_P^s(B(x,r))
\lesssim
N^{-1}N^{n/s}r^n
=
(N^{-1/s})^{s-n}r^n
\leq r^s,
\]
since \(r\leq N^{-1/s}\) and \(s<n\).

If \(N^{-1/s}\leq r\leq N^{-1/n}\), then \(B(x,r)\) intersects only \(O(1)\) of the balls \(B(p,N^{-1/s})\), so
\[
\mu_P^s(B(x,r))\lesssim N^{-1}\leq r^s.
\]

If \(N^{-1/n}\leq r\leq 1\), then the \(N^{-1/n}\)-separation of \(P\) implies that \(B(x,r)\) intersects at most \(\lesssim Nr^n\) of the balls \(B(p,N^{-1/s})\). Hence
\[
\mu_P^s(B(x,r))
\lesssim
N^{-1}(Nr^n)
=
r^n
\leq r^s,
\]
since \(r\leq1\) and \(s<n\). Finally, if \(r\geq1\), then
\[
\mu_P^s(B(x,r))\leq\mu_P^s(\R^n)\lesssim1\leq r^s.
\]

Thus \(\mu_P^s\) is an \(s\)-Frostman measure with constant independent of \(N\). Since \(s>\alpha_{+}(n,k)\) and the Frostman constant of \(\mu_P^s\) is uniform in \(N\), the quantitative proof of Theorem~\ref{thm:main} applies to \(P_{N^{-1/s}}\). Hence there exist pins
\[
\bm{x}_0=(x_{0,1},\dots,x_{0,k})\in (P_{N^{-1/s}})^k
\]
such that
\[
\mathcal L^k\bigl(
\Delta^{k\text{-star}}_{\bm{x}_0}(P_{N^{-1/s}})
\bigr)\gtrsim_s 1,
\]
where the implicit constant is independent of \(N\).

For each \(1\le i\le k\), choose \(x_i\in P\) with
\[
|x_i-x_{0,i}|\le N^{-1/s}.
\]
Let \(\bm{x}=(x_1,\dots,x_k)\). If \(y\in P_{N^{-1/s}}\), then there exists \(p\in P\) such that
\[
|y-p|\le N^{-1/s}.
\]

Hence, by the reverse triangle inequality,
\[
\bigl||x_{0,i}-y|-|x_i-p|\bigr|
\le
|(x_{0,i}-y)-(x_i-p)|.
\]
Therefore
\[
\bigl||x_{0,i}-y|-|x_i-p|\bigr|
\le
|(x_{0,i}-x_i)-(y-p)|
\le
|x_{0,i}-x_i|+|y-p|
\lesssim N^{-1/s}.
\]
Thus every coordinate of the \(k\)-star vector changes by at most \(CN^{-1/s}\), and consequently
\[
\Delta^{k\text{-star}}_{\bm{x}_0}(P_{N^{-1/s}})
\subset
\mathcal N_{C N^{-1/s}}
\bigl(
\Delta^{k\text{-star}}_{\bm{x}}(P)
\bigr)
\]
for some absolute constant \(C>0\).

Since \(\Delta^{k\text{-star}}_{\bm{x}}(P)\subset\mathbb R^k\) is finite, its \(C N^{-1/s}\)-neighborhood has \(k\)-dimensional Lebesgue measure bounded by
\[
\mathcal L^k\Bigl(
\mathcal N_{C N^{-1/s}}
\bigl(
\Delta^{k\text{-star}}_{\bm{x}}(P)
\bigr)
\Bigr)
\lesssim
N^{-k/s}
\bigl|\Delta^{k\text{-star}}_{\bm{x}}(P)\bigr|.
\]
Combining this with the lower bound above gives
\[
1
\lesssim
N^{-k/s}
\bigl|\Delta^{k\text{-star}}_{\bm{x}}(P)\bigr|.
\]
Thus
\[
\bigl|\Delta^{k\text{-star}}_{\bm{x}}(P)\bigr|
\gtrsim
N^{k/s}.
\]
By our choice of \(s\),
\[
N^{k/s}
\ge
N^{\frac{k}{\alpha_+(n,k)}-\varepsilon}.
\]
Hence
\[
\bigl|\Delta^{k\text{-star}}_{x_1,\dots,x_k}(P)\bigr|
\gtrsim_\varepsilon
N^{\frac{k}{\alpha_+(n,k)}-\varepsilon}.
\]
\end{proof}

\bibliographystyle{amsalpha}
\bibliography{./ref}

\end{document}